\documentclass{amsart}

\usepackage{amsmath,amssymb,amsthm}

\oddsidemargin=0pt
\evensidemargin=0pt
\topmargin=-7mm
\headsep=18pt
\textheight=230mm
\textwidth=155mm

\newtheorem{theo}{Theorem}[section]
\newtheorem{lemm}[theo]{Lemma}
\newtheorem{corr}[theo]{Corollary}

\numberwithin{equation}{section}

\theoremstyle{definition}
\newtheorem{defi}[theo]{Definition}

\newtheorem{rema}[theo]{Remark}

\newcommand{\ba}{\mathbf{a}}
\newcommand{\bb}{\mathbf{b}}

\newcommand{\bff}{\mathbf{f}}

\newcommand{\bn}{\mathbf{n}}
\newcommand{\bq}{\mathbf{q}}
\newcommand{\bu}{\mathbf{u}}
\newcommand{\bv}{\mathbf{v}}

\newcommand{\bD}{\mathbf{D}}

\newcommand{\bI}{\mathbf{I}}

\newcommand{\bK}{\mathbf{K}}

\newcommand{\bS}{\mathbf{S}}
\newcommand{\bT}{\mathbf{T}}

\newcommand{\dv}{{\rm div}\,}

\newcommand{\BR}{\mathbb{R}}

\newcommand{\CR}{\mathcal{R}}

\newcommand{\pd}{\partial}


\begin{document}

\title[Compressible-Incompressible Two-Phase Flows]
{Compressible-Incompressible Two-Phase Flows\\
with Phase Transition: Model Problem}

\author{KEIICHI WATANABE}
\address
{Department of Pure and Applied Mathematics,
Graduate School of Fundamental Science and Engineering,
Waseda University, 3-4-1 Ookubo,
Shinjuku-ku, Tokyo, 169-8555, Japan}		
\subjclass[2010]
{Primary: 35Q30; Secondary: 76T10}
\email{keiichi-watanabe@akane.waseda.jp}
\thanks{}
\keywords
{Two-phase flows; phase transition; 
surface tension;
Navier-Stokes-Korteweg equation;
compressible and incompressible viscous flow;
maximal $L_p\mathchar`-L_q$ regularity;
$\mathcal{R}$-bounded solution operator}		

\begin{abstract}
We study the compressible and incompressible two-phase flows 
separated by a sharp interface with a phase transition and a surface tension.
In particular, we consider the problem in $\mathbb{R}^N$, and
the Navier-Stokes-Korteweg equations is used in the upper domain and
the Navier-Stokes equations is used in the lower domain.
We prove the existence of $\mathcal{R}$-bounded solution 
operator families for a resolvent problem arising from its model problem.
According to Shibata \cite{GS2014}, the regularity
of $\rho_+$ is $W^1_q$ in space, but to solve the kinetic
equation: $\bu_\Gamma\cdot\bn_t = [[\rho\bu]]\cdot\bn_t
/[[\rho]]$ on $\Gamma_t$ we need $W^{2-1/q}_q$ regularity of $\rho_+$
on $\Gamma_t$, which means the regularity loss.
Since the regularity of $\rho_+$ dominated by the Navier-Stokes-Korteweg 
equations is $W^3_q$ in space,
we eliminate the problem by using the Navier-Stokes-Korteweg equations
instead of the compressible Navier-Stokes equations.
\end{abstract}

\maketitle

\section{Introduction}
This paper deals with compressible-incompressible 
two-phase flows separated by a sharp interface. 
In particular, we consider the phase transition at the interface.
Our problem is formulated as follows:  Let $\Omega_{t+}$ and 
$\Omega_{t-}$ be two time dependent domains, and
$\Gamma_t$ be the common boundary of $\Omega_{t+}$ and $\Omega_{t-}$.
We assume that $\Omega_{t+} \cap \Omega_{t-} = \emptyset$
and $\Omega_{t+} \cup \Gamma_t \cup \Omega_{t-} = \BR^N$, where
$\BR^N$ denotes the $N$-dimensional Euclidean space. 
Furthermore, we assume that $\Omega_{t+}$ and $\Omega_{t-}$ 
are occupied by a compressible viscous fluid
and an incompressible viscous fluid, respectively.
For example, $\Omega_{t-}$ is corresponding to an ocean of 
infinite extent without bottom, $\Omega_{t+}$ the atmosphere,
and $\Gamma_t$ the surface of the ocean.
Let $\bn_t$ be the unit outer normal to
$\Gamma_t$ pointed from $\Omega_{t+}$ to $\Omega_{t-}$. 
For any $x_0 \in \Gamma_t$ and function $f$ defined on 
$\Omega_{t+} \cup \Omega_{t-}$, we set
\begin{align*}
[[f]](x_0, t) = \lim_{\substack{x\to x_0 \\ x \in \Omega_{t-}}}f(x, t)
- \lim_{\substack{x\to x_0 \\ x \in \Omega_{t+}}}f(x, t),
\end{align*}
which is the jump quantity of $f$ across $\Gamma_t$.
Let $\dot\Omega_t = \Omega_{t+} \cup \Omega_{t-}$, 
and for any function $f$ defined on $\dot\Omega_t$,
we write $f_\pm = f|_{\Omega_{t\pm}}$. 
In the following, we use the following symbols:
\allowdisplaybreaks{
\begin{alignat*}2
\bullet\enskip& \rho:&\enskip &\dot{\Omega}_t
\to\overline{\mathbb{R}_+}
=[0,\infty)\enskip \text{ is the density}, \\
\bullet\enskip& \mathbf{u}:&\enskip &\dot{\Omega}_t
\to\mathbb{R}^N\enskip\text{ the velocity field}, \\
\bullet\enskip& \bu_\Gamma: &\enskip
& \Gamma_t \to \mathbb{R}^N\enskip\text{the interfacial velocity field}, \\
\bullet\enskip& \pi:&\enskip &\dot{\Omega}_t
\to\mathbb{R}\enskip\text{ the pressure field}, \\
\bullet\enskip& \mathbf{T}:&\enskip&\dot{\Omega}_t
\to\{A\in GL_N(\mathbb{R})\ |{}^\top \!A=A\}
\enskip\text{ the stress tensor field}, \\
\bullet\enskip&\theta:&\enskip&\dot{\Omega}_t
\to(0,\infty) \enskip\text{the thermal field},\\
\bullet\enskip& e:&\enskip&\dot{\Omega}_t
\to\overline{\mathbb{R}_+}
\enskip\text{the internal energy density}, \\
\bullet\enskip& \eta:&\enskip&\dot{\Omega}_t
\to\mathbb{R} \enskip\text{the entropy density}, \\
\bullet\enskip& \psi:&\enskip&\dot{\Omega}_t
\to\mathbb{R} \enskip\text{the Helmholtz free energy function}, \\
\bullet\enskip& \mathbf{q}:&\enskip&\dot{\Omega}_t
\to\mathbb{R}^N \enskip\text{the energy flux}, \\
\bullet\enskip& \mathbf{f}:&\enskip&\dot{\Omega}_t
\to\mathbb{R}^N 
\enskip\text{the external body force per unit mass}, \\
\bullet\enskip& r:&\enskip&\Omega
\to\mathbb{R} \enskip\text{the heat supply}.
\end{alignat*}	
}
Here and in the following, ${}^\top\! M$
denotes the transposed $M$.  
And then, our problem is:
\begin{alignat}2
&\left\{\begin{aligned}\label{10000}
&\pd_t\rho + \dv(\rho\bu) = 0, \\
&\pd_t(\rho\bu) + \dv(\rho\bu\otimes\bu) - \dv\bT = \rho\bff,\\
&\pd_t(\frac{\rho}{2}|\bu|^2 + \rho e) + \dv((\frac{\rho}{2}|\bu|^2
+\rho e)\bu) - \dv(\bT\bu - \mathbf{q})
= \rho\bff\cdot\bu + \rho r, 
\end{aligned}\right.
&\quad &\text{for  $x\in \dot\Omega_t$, $t>0$},	
\intertext{subject to the interface conditions:}
\label{40000}
&\left\{\begin{aligned}
&[[\bu]]= \j[[1/\rho]]\bn_t,\quad
\j[[\mathbf{u}]]-[[\mathbf{T}]]\mathbf{n}_t
=-\sigma H_{\Gamma_t}\mathbf{n}_t,\\
&[[\theta]]=0, \quad \j\theta[[\eta]]-[[d\nabla\theta]]\cdot\mathbf{n}_t=0,\\
&\cfrac{\j^2}{2}\Big[\Big[\cfrac{1}{\rho^2}\Big]\Big]+ [[\psi]]
-\Big[\Big[\cfrac{1}{\rho}(\mathbf{Tn}_t\cdot\mathbf{n}_t)\Big]\Big]=0\\
&V_t=\mathbf{u}_\Gamma\cdot\mathbf{n}_t
=\cfrac{[[\rho\mathbf{u}]]\cdot\mathbf{n}_t}{[[\rho]]},\\
&(\nabla\rho_+)\cdot\mathbf{n}_t=0.	
\end{aligned}\right.
&\quad &\text{for  $x\in\Gamma_{t}$, $t>0$,}
\end{alignat}
where 
$\Omega_{t\pm}$ and $\Gamma_t$ are given by
\begin{align*}
\Omega_{t\pm}=&\{(x',x_N)\in \mathbb{R}\times\mathbb{R}^{N-1}
\mid \pm(x_N-h(x',t))>0,\enskip t\geq 0 \},\\
\Gamma_{t}=&\{x\in\mathbb{R}^N\mid x_N=h(x',t)
\quad\text{ for $x'\in\mathbb{R}^{N-1}$} \},
\end{align*}
respectively, with unknown function $h(x',t)$.
Above, 
$H_{\Gamma_t}$ is the $N-1$ times mean curvature of 
$\Gamma_t$,  
$\sigma$ a positive constant describing 
the coefficient of the surface tension, 
and $V_t$ the velocity of evolution of 
$\Gamma_{t}$ with respect to $\mathbf{n}_t$.
Furthermore, $\pd_t = \pd/\pd t$, $\pd_i = \pd/\pd x_i$, 
for any matrix field $\mathbf{K}$ with $(i,j)^{\rm th}$
component $K_{ij}$,  the quantity ${\rm div}\,\mathbf{K}$
is the $N$-vector with $i^{\rm th}$ component
$\sum_{j=1}^N\partial_jK_{ij}$, and for any $N$ 
vectors or $N$ vector fields, $\ba = {}^\top(a_1, \ldots, a_N)$, 
$\bb = {}^\top(b_1, \ldots, b_N)$,  we set 
$\mathbf{a}\cdot\mathbf{b}=\sum_{j=1}^{N}a_jb_j$,
$\dv\ba = \sum_{j=1}^N\pd_ja_j$, and $\ba\otimes\bb$ denotes
the $N\times N$ matrix with $(i,j)^{\rm th}$ component $a_ib_j$. 
The interface condition \eqref{40000} 
is explained in more detail in Sect. \ref{modeling} below. 
To describe the motion of incompressible viscous flow occupying $\Omega_{t-}$
we use the usual Navier-Stokes equations, and so we set
\begin{align}\label{incompressible-assumption}
\rho_- =\rho_{*-}, \quad
\bT_- =\mu_-\bD(\bu_-)-\pi_-\bI,
\quad \mathbf{q}_- = -d_-\nabla\theta_-,
\end{align}
where $\rho_{*-}$ is a positive constant describing the mass density of 
the reference body $\Omega_{t-}$, $\mu_-$ is 
the viscosity coefficient,  
$\bD(\bu) = (1/2)(\nabla\bu + {}^\top\nabla\bu)$,
is the deformation
tensor with $(i, j)^{\rm th}$ element $D_{ij}(\bu)=\pd_iu_j+ \pd_ju_{i}$
for $\pd_i = \pd/\pd x_i$ and $\bu={}^\top(u_1,\ldots, u_N)$, 
and $d$ is the coefficient of the heat flux. 
In particular, the equation of mass conservation: 
$\pd_t\rho_- + \dv(\rho_-\bu_-) = 0$  leads to 
$\dv \bu_-=0$ in $\Omega_{t-}$. 

On the other hand, to describe the motion of a compressible
viscous fluid occupying 
$\Omega_{t+}$, we adopt the Navier-Stokes-Korteweg
tensor of the following form: $\bT_+ = \bS_+ + \bK_+ - \pi_+\bI$
with 
\begin{align}\label{compressible-assumption}\begin{split}
\bS_+ &= \mu_+\bD(\bu_+) + (\nu_+-\mu_+)\dv\bu_+\bI-\pi_+, \\
\bK_+ & = \Bigl(\frac{\kappa_++\rho_+\kappa'_+}{2}|\nabla\rho_+|^2
+ \kappa_+\rho_+\Delta\rho_+\Bigr)\bI - \kappa_+
\nabla\rho_+\otimes\nabla\rho_+.
\end{split}\end{align}
Here, $\mathbf{K}_+$ is called the Korteweg tensor
(cf. Dunn and Serrin \cite{DS1985} and 
Kotschote \cite{Kotschote2010}). According to Dunn and Serrin
\cite{DS1985}, in view of the second law of 
thermodynamics the energy flux includes not only 
a classical contribution corresponding to 
the Fourier law but also a nonclassical contribution, 
which we now call the interstitial working. In this sense, 
the energy flux $\mathbf{q}_+$ is given by
\begin{align}\label{energy-flux}
\mathbf{q}_+=-d_+\nabla\theta_+ +(\kappa_+\rho_+\,{\rm div}\,\mathbf{u}_+)
\nabla\rho_+
\end{align}
when we use the Navier-Stokes-Korteweg equations,
where $d_+$ is the coefficient of the heat flux.

We assume that $\mu_+=\mu_+(\rho_+,\theta_+)$,
$\nu_+=\nu_+(\rho_+,\theta_+)$, $\kappa_+=\kappa_+(\rho_+,\theta_+)$,
$e_+=e_+(\rho_+,\theta_+)$, $d_+=d_+(\rho_+,\theta_+)$
are positive $C^\infty$ functions with respect to 
$(\rho_+,\theta_+)\in (0,\infty)\times(0,\infty)$,
and $\psi_+(\rho_+,\theta_+)$ and $\eta_+(\rho_+,\theta_+)$ are 
real valued $C^\infty$ functions with respect to 
$(\rho_+,\theta_+)\in(0,\infty)\times(0,\infty)$,
while $\mu_{-}=\mu_{-}(\theta_-)$,
$e_-=e_-(\theta_-)$, $d_-=d_-(\theta_-)$
are positive $C^\infty$ functions with respect to
$\theta_-\in (0,\infty)$, and $\psi_-(\theta_-)$ and
$\eta_-(\theta_-)$ are real valued $C^\infty$ functions
with respect to $\theta_-\in (0,\infty)$.
Moreover, we  assume that $\pd e_+/\pd \theta_+ > 0$, 
$e_-' > 0$, and 
$\pi_+$ is given by
$\pi_+=P_+(\rho_+,\theta_+)$, where $P_+$ is some
$C^\infty$ function with respect to
$(\rho_+,\theta_+)\in (0,\infty)\times(0,\infty)$.	

We now explain why the Navier-Stokes-Korteweg equations
is used in $\Omega_{t+}$ to describe the
motion of the compressible viscous fluid.
Shibata \cite{Shibata2016} used the Navier-Stokes-Fourier
equations for $\Omega_{t+}$, that is $\bT_+ = \bS_+$ and
$\mathbf{q}_+=-d_+\nabla\theta_+$, 
to formulate the compressible-incompressible two-phase flows
separated by a sharp interface with the phase transition.
He proved the existence of $\CR$ bounded solution operators
for the model problem that derives the maximal $L_p\mathchar`-L_q$
regularity of solutions to the linearized equations
automatically with the help of Weis's operator
valued Fourier multiplier theorem \cite{Weis2001}. 
According to Shibata \cite{GS2014}, the regularity
of $\rho_+$ is $W^1_q$ in space, but to solve the kinetic
equation: $\bu_\Gamma\cdot\bn_t = [[\rho\bu]]\cdot\bn_t
/[[\rho]]$ on $\Gamma_t$ we need $W^{2-1/q}_q$ regularity of $\rho_+$
on $\Gamma_t$, which means 
the regularity loss. On the other hand, 
the regularity of $\rho_+$ dominated by the Navier-Stokes-Korteweg 
equations is $W^3_q$ in $\Omega_{t+}$
(cf. Kotschote \cite{Kotschote2008,Kotschote2010} and Saito \cite{Saito}),
which is enough to solve the kinetic equation. In addition, 
quite recently Gorban and Karlin \cite{GK2017} proved that
the Navier-Stokes-Korteweg equations is implied by 
the Boltzmann equation that describes the
statistical behavior of a gas. In this sense, to use
the Navier-Stokes-Korteweg equations to describe the motion of 
compressible viscous fluid flow is meaningful.
Furthermore, we would like to add some comments about 
the Navier-Stokes-Korteweg equations. 
More than one hundred years ago, Korteweg
\cite{Korteweg} derived the Navier-Stokes-Korteweg equations
to describe the two phase problem with diffused
interface like liquid and vapor flows with  phase transition,
which was based on the gradient theory for
the interface developed by van der Waals \cite{vanderWaals}.
In 1985, Dunn and Serrin \cite{DS1985}
studied the Navier-Stokes-Korteweg 
equations with the second law of thermodynamics. 
As equations describing the two-phase flows with diffused interface, 
we also know the Navier-Stokes-Allen-Chan equations and 
the Navier-Stokes-Chan-Hilliard equations (cf. \cite{CH1958}), but 
they can be reduced to the Navier-Stokes-Korteweg equations,
which is quite recently proved by 
Freisth\"uler and Kotchote \cite{FK2017}.
Thus, our formulation \eqref{10000} and \eqref{40000} includes 
the following situation:  The ocean and atmosphere are separated by
a sharp interface and on this interface the phase transition occurs.
In addition, the atmosphere part is two-phase flows with diffused interface
like the mixture of gas and ice. Thus, we totally treat
three phase problem, and liquid and gas-solid  are separated by
a sharp interface with phase transition and gas-solid part has
diffused interface with phase transition. 

Finally, let us mention related results about the
initial-boundary value problem for the Navier-Stokes-Korteweg equations.
In 2008 and 2010, Kotschote \cite{Kotschote2008, Kotschote2010}
proved the existence and uniqueness of local strong 
solutions for an isothermal and non-isothermal model 
of capillary compressible fluids derived by Dunn and Serrin \cite{DS1985}.
Recently, Tsuda \cite{Tsuda2016} studied the existence and stability of
time periodic solution to the Navier-Stokes-Korteweg equations in $\BR^3$, and 
Saito \cite{Saito} proved the existence of 
$\CR$ bounded solution operators for the model problem
of the Navier-Stokes-Korteweg equations with free boundary conditions.

The two phase problem has been studied by
Abels \cite{Abe2007}, Denisova \cite{Den1991,Den1994},
Denisova and Solonnikov \cite{DS1991,DS1995}, Giga and Takahashi \cite{GT1994},
Maryani and Saito \cite{MS2017},
Nouri and Poupaund \cite{NP1995}, 
Pr\"{u}ss {\it et al}. \cite{PSSS2012,PS2010,PS2011},
Shibata and Shimizu \cite{SS2011}, etc..
Although these works dealt with the two phase problem for 
the incompressible-incompressible case, as far as the author knows,
the compressible-incompressible case is few.
The compressible-incompressible case was studied by
Denisova \cite{Den2015}, Denisova and Solonnikov \cite{DS2017},
Kubo, Shibata, and Soga \cite{KSS2014}, and Shibata \cite{Shibata2016}.
In particular, Denisova \cite{Den2015} and Denisova and Solonnikov \cite{DS2017} 
studied the compressible-incompressible case
in the $L_2$ Sobolev-Sobodetskii space.
Denisova \cite{Den2015} proved the energy inequality without surface tension.
Denisova and Solonnikov \cite{DS2017} proved
the global-in-time solvability without surface tension
under the assumption that the data are small.
On the other hand, Kubo, Shibata, and Soga \cite{KSS2014}
and Shibata \cite{Shibata2016}
studied the compressible-incompressible case in
the $L_p$ in time and $L_q$ in space frame work.
Kubo, Shibata, and Soga \cite{KSS2014} proved the existence of 
$\CR$-bounded solution operators to the corresponding generalized 
resolvent problem without surface tension and without phase transition and
Shibata \cite{Shibata2016} prove it with surface tension and phase transition.
However, the work in \cite{KSS2014,Shibata2016}
included the problem about the regularity of density, which we mentioned above.
In this paper, we eliminate this problem by using 
the Navier-Stokes-Korteweg equations 
instead of the compressible Navier-Stokes equations.

Our goal is to prove the local well-posedness and for this purpose,
the key step is to prove the maximal $L_p$-$L_q$ 
regularity of the model problem. Let  
\begin{align*}
\mathbb{R}^N_\pm
=\{x=(x_1,\dots,x_N)\in\mathbb{R}^N|\ \pm x_N>0 \},
\quad
\mathbb{R}^N_0=\{x\in \mathbb{R}^N|\ x_N=0 \},
\quad \dot{\mathbb{R}}^N = \mathbb{R}^N_+ \cup \mathbb{R}^N_-.
\end{align*}
and  for any $x_0\in \mathbb{R}^N_0$, we define
$f|_\pm(x_0)$ by
\begin{align*}
f|_\pm(x_0)
=\lim_{\substack{x\to x_0\\ x\in\mathbb{R}^N_\pm}} f(x),
\end{align*}
Let $\rho_{*\pm}$ and $\theta_{*\pm}$ be the mass density and
the absolute temperature of the reference domain:
$\Omega_{t\pm}|_{t=0}$, all of which are positive constants. 
Transforming $\dot\Omega_t$ and $\Gamma_t$ to $\dot{\mathbb{R}}^N$
and $\mathbb{R}^N_0$, respectively, and linearizing the 
problem at $\rho_{*\pm}$ and $\theta_{*\pm}$, we have following
two model problems.	
One is the following system:
\allowdisplaybreaks{
\begin{alignat}2\label{11000}
\partial_t\rho_++\rho_{*+}{\rm div}\,\mathbf{u}_+
&=f_1 &\text{ in $\mathbb{R}^N_+\times (0,T)$},\\
\rho_{*+}\partial_t\mathbf{u_+}-\mu_{*+}\Delta\mathbf{u_+}
-\nu_{*+}\nabla{\rm div}\,\mathbf{u_+}\nonumber& &\\
-\rho_{*+}\kappa_{*+}\nabla\Delta\rho_+
&=\mathbf{f}_2 &\text{ in $\mathbb{R}^N_+\times (0,T)$},
\nonumber\\
{\rm div}\,\mathbf{u}_-=f_3
&={\rm div}\,\mathbf{f}_4\quad
&\text{ in $\mathbb{R}^N_-\times (0,T)$}, \nonumber\\
\rho_{*-}\partial_t\mathbf{u}_-
-\mu_{*-}\Delta\mathbf{u}_-+\nabla \pi_-
&=\mathbf{f}_5 &\text{ in $\mathbb{R}^N_-\times (0,T)$},
\nonumber
\end{alignat}	
}
subject to the interface condition:
for $x_0\in\mathbb{R}^N_0$ and $t\in(0,T)$
\allowdisplaybreaks{
\begin{align}\label{11001}
\partial_t h-\Big(\frac{\rho_{*-}}{\rho_{*-}-\rho_{*+}}
u_{N-}|_-(x_0)
-\frac{\rho_{*+}}{\rho_{*-}-\rho_{*+}}u_{N+}|_+(x_0) \Big)&=d,\\
\mu_{*-}D_{mN}(\mathbf{u_-})|_-(x_0)
-\mu_{*+}D_{mN}(\mathbf{u}_+)|_+(x_0)&=g_m,\nonumber\\
\{\mu_{*-}D_{NN}(\mathbf{u}_-)-\pi_- \}|_-(x_0)
-\{\mu_{*+}D_{NN}(\mathbf{u}_+)& \nonumber\\
+(\nu_{*+}-\mu_{*+}){\rm div}\,\mathbf{u}_+
+\rho_{*+}\kappa_{*+}\Delta\rho_+ \}|_+(x_0)-\sigma\Delta'h&=f_6, \nonumber\\
\frac{1}{\rho_{*-}}\{\mu_{*-}D_{NN}(\mathbf{u}_-)
-\pi_- \}|_-(x_0)
-\frac{1}{\rho_{*+}}\{\mu_{*+}D_{NN}(\mathbf{u}_+)&\nonumber\\
+(\nu_{*+}-\mu_{*+}){\rm div}\,\mathbf{u}_+
+\rho_{*+}\kappa_{*+}\Delta\rho_+ \}|_+(x_0)& =f_7, \nonumber\\
u_{m-}|_-(x_0)-u_{m+}|_+(x_0)&=h_m, \nonumber\\
\partial_N\rho_+|_+(x_0)&=k, \nonumber
\end{align}	
}	
and initial condition:
\begin{alignat}2\label{11002}
\mathbf{u}_\pm|_{t=0}
&=\mathbf{u}_{0\pm}&\quad &\text{ in $\mathbb{R}^N_\pm$},
\\\rho_+|_{t=0}&=\rho_{*+}+\rho_{0+}&\quad 
&\text{ in $\mathbb{R}^N_+$},\nonumber
\\ h|_{t=0}&=h_0&\quad &\text{ in $\mathbb{R}^N_0$},\nonumber
\end{alignat}
where $m$ ranges from $1$ to $N-1$ and we have set
$\mathbf{u}_\pm=(u_{1\pm},\dots,u_{N\pm})$,
$\mu_{*+}=\mu_+(\rho_{*+},\theta_{*+})$,
$\kappa_{*+}=\kappa_+(\rho_{*+},\theta_{*+})$,
$\nu_{*+}=\nu_+(\rho_{*+},\theta_{*+})$,
$\mu_{*-}=\mu_{-}(\theta_{*-})$,
$\Delta' h=\sum_{j=1}^{N-1}\partial_j^2 h$.

The other is the heat equations:
\begin{alignat}2\label{11003}
&\rho_{*+}\kappa_{v*+}\partial_t\theta_+-d_{*+}\Delta\theta_+
=\tilde{f}_+ \qquad&\text{ in $\mathbb{R}^N_+\times (0,T)$},\\
&\rho_{*-}\kappa_{v*-}\partial_t\theta_--d_{*-}\Delta\theta_-
=\tilde{f}_- \qquad&\text{ in $\mathbb{R}^N_-\times (0,T)$},
\nonumber
\end{alignat}
subject to the interface condition:
for $x_0\in \mathbb{R}^N_0$ and $t\in (0,T)$
\begin{align}\label{11004}
\theta_-|_-(x_0)-\theta_+|_+(x_0)=0,\quad
d_{*-}\partial_N\theta_-|_-(x_0)-d_{*+}\partial_N\theta_+|_+(x_0)
=\widetilde{g},
\end{align}	
and the initial condition:
\begin{align}\label{11005}
\theta_\pm|_{t=0}=\theta_{0\pm}\quad{\rm in}\ \mathbb{R}^N_\pm
\end{align}
where we have set $d_{*+}=d(\rho_{*+},\theta_{*})$,
$\kappa_{v*+}=(\pd e_+/\pd \theta_+)(\rho_{*+},\theta_{*})$, 
$d_{*-}=d_-(\theta_{*})$,
and $\kappa_{v*-}=e'_-(\theta_{*})$.
Here, the right-hand sides of (\ref{11000}), (\ref{11001}),
(\ref{11003}), and (\ref{11004}) are nonlinear terms.

We note that the interface condition (\ref{11001}) can
be rewritten as follows: for $x_0\in\mathbb{R}^N_0$ and $t\in(0,T)$
\begin{align*}
\partial_t h-\Big(\frac{\rho_{*-}}
{\rho_{*-}-\rho_{*+}}u_{N-}|_-(x_0)
-\frac{\rho_{*+}}{\rho_{*-}-\rho_{*+}}u_{N+}|_+(x_0) \Big)&=d 
\nonumber,\\
\mu_{*-}D_{mN}(\mathbf{u_-})|_-(x_0)
-\mu_{*+}D_{mN}(\mathbf{u}_+)|_+(x_0)&=g_m,	\\
\{\mu_{*-}D_{NN}(\mathbf{u}_-)-\pi_- \}|_-(x_0)
&=\sigma_-\Delta'h+g_N, \nonumber\\
\{\mu_{*+}D_{NN}(\mathbf{u}_+)
+(\nu_{*+}-\mu_{*+}){\rm div}\,\mathbf{u}_+
+\rho_{*+}\kappa_{*+}\Delta\rho_+ \}|_+(x_0)
&=\sigma_+\Delta'h+g_{N+1}, \nonumber\\
u_{m-}|_-(x_0)-u_{m+}|_+(x_0)&=h_m, \nonumber\\
\partial_N\rho_+|_+(x_0)&=k, \nonumber
\end{align*}	
with 
\begin{align*}
\sigma_\pm=\frac{\rho_{*\pm}\sigma}{\rho_{*-}-\rho_{*+}},\quad
g_{N}=\frac{\rho_{*-}}{\rho_{*-}-\rho_{*+}}(f_6-\rho_{*+}f_7),\quad
g_{N+1}=\frac{\rho_{*+}}{\rho_{*-}-\rho_{*+}}(f_6-\rho_{*-}f_7).
\end{align*}	

As in Shibata \cite{Shibata2014a, Shibata2016a}, the maximal $L_p$-$L_q$ 
regularity and the generation of $C^0$ analytic semigroup 
follow automatically from 
the existence of $\CR$ bounded solution operator families 
of the corresponding generalized resolvent problem.
Hence, in this paper we concentrate on  the existence of 
$\mathcal{R}$-bounded solution operator families for the 
resolvent problem arising from model problem 
with the interface condition:
\begin{align}\label{201}
\lambda\rho_++\rho_{*+}{\rm div}\,\mathbf{u}_+
&=f_1 \qquad{\rm in}\ \mathbb{R}^N_+,\\
\rho_{*+}\lambda\mathbf{u_+}-\mu_{*+}\Delta\mathbf{u_+}
-\nu_{*+}\nabla{\rm div}\,\mathbf{u_+}
-\rho_{*+}\kappa_{*+}\nabla\Delta\rho_+
&=\mathbf{f}_2 \qquad\text{ in $\mathbb{R}^N_+$}, \nonumber\\
{\rm div}\,\mathbf{u}_-=f_3={\rm div}\,\mathbf{f}_4,\quad 
\rho_{*-}\lambda\mathbf{u}_--\mu_{*-}\Delta\mathbf{u}_-
+\nabla \pi_-&=\mathbf{f}_5 
\qquad\text{ in $\mathbb{R}^N_-$}, \nonumber\\
\lambda H-\Big(\frac{\rho_{*-}}{\rho_{*-}-\rho_{*+}}u_{N-}|_-(x_0)
-\frac{\rho_{*+}}{\rho_{*-}-\rho_{*+}}u_{N+}|_+(x_0) \Big)
&=d \nonumber,\\
\mu_{*-}D_{mN}(\mathbf{u_-})|_-(x_0)
-\mu_{*+}D_{mN}(\mathbf{u}_+)|_+(x_0)&=g_m,	\nonumber\\
\{\mu_{*-}D_{NN}(\mathbf{u}_-)-\pi_- \}|_-(x_0)
&=\sigma_-\Delta'H+g_N, \nonumber\\
\{\mu_{*+}D_{NN}(\mathbf{u}_+)
+(\nu_{*+}-\mu_{*+}){\rm div}\,\mathbf{u}_+
+\rho_{*+}\kappa_{*+}\Delta\rho_+ \}|_+(x_0)
& =\sigma_+\Delta'H+g_{N+1}, \nonumber\\
u_{m-}|_-(x_0)-u_{m+}|_+(x_0)&=h_m, \nonumber\\
\partial_N\rho_+|_+(x_0)&=k, \nonumber
\end{align}
which is corresponding to the time dependent problem 
(\ref{11000}), (\ref{11001}), and (\ref{11002}).
Here, $H(x,t)$ is an extension of $h(x',t)$ such that
$H=h$ on $\BR^N_0$.
In this paper, we do not consider (\ref{11000}), 
(\ref{11001}), and (\ref{11002}) anymore, and so 
we use the same symbols in the right-hand side of (\ref{201})	
as used in (\ref{11000}) and (\ref{11001}) below.

In order to state our main results precisely we introduce
function spaces and some more symbols which will be used
throughout the paper.
For any scalar field $\theta$ we set $\nabla \theta = 
(\pd_1\theta, \ldots, \pd_N\theta)$, and for 
any $N$-vector field $\bu = {}^\top(u_1, \ldots, u_N)$,
$\nabla\bu$ is the $N\times N$ matrix with $(i, j)^{\rm th}$
component $\pd_iu_j$. 
For any domain $D$ in $\mathbb{R}^N$, integer $m$,
and $1\leq q\leq\infty$,
$L_q(D)$ and $W^m_q(D)$ denote
the usual Lebesgue space and Sobolev space of
functions defined on $D$ with norms:
$\|\cdot\|_{L_q(D)}$ and $\|\cdot\|_{W^m_q(D)}$,
respectively.
We set $W^0_q(D)=L_q(D)$.
The $\hat{W}^1_q(D)$ is a homogeneous space defined by
$\hat{W}^1_q(D)=
\{f\in L_{q,{\rm loc}}(D)\ |\ \nabla f\in L_q(D) \} $.
For any Banach space $X$, interval $I$, integer $m$,
and $1\leq p\leq \infty$,
$L_p(I,X)$ and $W^m_p(I,X)$ denote the usual Lebesgue space and
Sobolev space of the $X$-valued functions defined on $I$
with norms:
$\|\cdot\|_{L_p(I,X)}$ and $\|\cdot\|_{W^m_p(I,X)}$,
respectively.
For any Banach space $X$, $X^N$ denote the $N$-product space
of $X$, that is
$X^N=\{f=(f_1,\dots,f_N)\ |\ f_i\in X\ (i=1,\dots,N) \}$.
The norm of $X^N$ is also denoted by $\|\cdot\|_X$ for
simplicity and $\|f\|_X=\sum_{j=1}^{N}\|f_j\|_X$ for
$f=(f_1,\dots,f_N)\in X^N$.
For any two Banach spaces $X$ and $Y$, 
$\mathcal{L}(X,Y)$ denotes the space of all bounded linear
operators from $X$ to $Y$, and $\mathcal{L}(X)$ is
the abbreviation of $\mathcal{L}(X,X)$.
Let $U$ be a subset of $\mathbb{C}$.
Then ${\rm Anal}\,(U,\mathcal{L}(X,Y))$ denotes the set of all
$\mathcal{L}(X,Y)$-valued analytic functions defined on $U$.
Throughout in this paper, the letter $C$ denotes generic
constants and $C_{\alpha,\beta,\gamma,\dots}$
means that the constant depends on the quantities
$\alpha,\beta,\gamma,\dots$.
The values of constants $C$ and 
$C_{\alpha,\beta,\gamma,\dots}$ may change from line to line.	

Before we state the main theorem, 
we first introduce the definition of $\mathcal{R}$-boundedness.
\begin{defi}($\mathcal{R}$-boundedness)
Let $X$ and $Y$ be Banach spaces. 
A set of operators $\mathcal{T}\subset \mathcal{L}(X,Y)$ 
is called $\mathcal{R}$-bounded, 
if there is a constant $0<C<\infty$ and $1\leq p<\infty$ 
such that, for all $T_1,\dots,T_m\in\mathcal{T}$ 
and $x_1,\dots,x_m\in X$ with $m\in\mathbb{N}$, we have
\begin{align*}
\left(\int_{0}^{1}\Big\| \sum_{n=1}^{m}r_n(t)T_nx_n
\Big\|_Y^p dt\right)^{1/p}
\leq C \left(\int_{0}^{1}\Big\| \sum_{n=1}^{m}r_n(t)x_n
\Big\|_X^p dt\right)^{1/p}
\end{align*}
where $r_n(t)={\rm sign}\sin (2^n\pi t)$ are the 
Rademacher functions on $[0,1]$.
The smallest such $C$ is called $\mathcal{R}$-bound of 
$\mathcal{T}$ on $\mathcal{L}(X,Y)$, which is denoted by 
$\mathcal{R}_{\mathcal{L}(X,Y)}(\mathcal{T})$.		
\end{defi}	

Let $\eta_*$ be a constant given by
\begin{align*}
\eta_*=
\Big(\frac{\mu_{*+}+\nu_{*+}}{2\kappa_{*+}} \Big)^2
-\frac{1}{\kappa_{*+}}
\end{align*}
and let $\varepsilon_*\in [0,\pi/2)$ be some angle
that is given precisely in Lemma \ref{lem601} below.
In this paper, we assume $\eta_*\neq 0$ and 
$\kappa_{*+}\neq\mu_{*+}\nu_{*+}$.
We discuss these conditions in more 
detail in Remark \ref{eta.condition} below
(cf. Saito \cite[Remark 3.3]{Saito}).

The following theorem is a main theorem in this paper.
\begin{theo}\label{Th02}
Let $1<q<\infty$ and $\varepsilon_* <\varepsilon<\pi/2$.
Assume that $\rho_{*+}\neq\rho_{*-}$, $\eta_*\neq 0$,
and $\kappa_{*+}\neq \mu_{*+}\nu_{*+}$.
Set	
\begin{align*}
\Sigma_\varepsilon
=&\{\lambda=\gamma+i\tau\in \mathbb{C}\backslash\{0\} 
\mid |{\rm arg}\,\lambda|\leq \pi-\varepsilon \},\\
\Sigma_{\varepsilon,\lambda_0}
=&\{\lambda\in\Sigma_\varepsilon \mid 
|\lambda|>\lambda_0 \} \quad
(\lambda_0\geq 0),\qquad
\partial_\tau=\partial/\partial\tau,\\
X_q=&
\{(f_1,\mathbf{f}_2,f_3,\mathbf{f}_4,
\mathbf{f}_5,\mathbf{g},\mathbf{h},k,d) \mid
f_1\in W^1_q(\mathbb{R}^N_+),\enskip
\mathbf{f}_2 \in L_q(\mathbb{R}^N_+)^N,
f_3\in W^1_q(\mathbb{R}^N_-),\\
&\quad
\mathbf{f}_4 \in L_q(\mathbb{R}^N_-)^N,\enskip
\mathbf{f}_5 \in L_q(\mathbb{R}^N_-)^N,\enskip
\mathbf{g}=(g_{1},\dots,g_{N+1})
\in W^1_q(\mathbb{R}^N)^{N+1},\enskip\\
&\quad
\mathbf{h}=(h_1,\dots,h_{N-1})\in 
W^2_q(\mathbb{R}^N)^{N-1} ,\enskip
k\in W^2_q (\mathbb{R}^N ),\enskip 
d\in W^2_q(\mathbb{R}^N) \},\\
\mathcal{X}_q=&
\{(F_1,\dots,F_{15}) \mid
F_1\in W^1_q(\mathbb{R}^N_+),\enskip
F_2 \in L_q(\mathbb{R}^N_+)^N,\enskip 
F_3\in L_q(\mathbb{R}^N_-),\\
&\quad
F_4,F_5,F_6 \in L_q(\mathbb{R}^N_-)^N,\enskip 
F_7 \in L_q(\mathbb{R}^N)^{N+1},\enskip
F_8 \in L_q(\mathbb{R}^N)^{(N+1)N},\\
&\quad 
F_9 \in L_q(\mathbb{R}^N)^{N-1},\enskip
F_{10} \in L_q(\mathbb{R}^N)^{(N-1)N},\enskip
F_{11 }\in L_q(\mathbb{R})^{(N-1)N^2},\\
&\quad
F_{12},F_{13} \in L_q(\mathbb{R}^N)^N,\enskip
F_{14} \in L_q(\mathbb{R}^N)^{N^2},\enskip
F_{15} \in W^2_q(\mathbb{R}^N) \}.
\end{align*}		
Then, there exist a positive constant $\lambda_0$ 
and operator families
$\mathcal{A}_\pm(\lambda)$, $\mathcal{B}_+(\lambda)$,
$\mathcal{P}_-(\lambda)$, and $\mathcal{H}(\lambda)$
with
\begin{align*}
\mathcal{A}_{\pm}(\lambda)\in&
{\rm Anal}\,(\Sigma_{\varepsilon,\lambda_0},
\mathcal{L}(\mathcal{X}_q,W^2_q(\mathbb{R}^N_\pm)^N),\\
\mathcal{B}_{+}(\lambda)\in&
{\rm Anal}\,(\Sigma_{\varepsilon,\lambda_0},
\mathcal{L}(\mathcal{X}_q,W^3_q(\mathbb{R}^N_+)),\\
\mathcal{P}_{-}(\lambda)\in &
{\rm Anal}\,(\Sigma_{\varepsilon,\lambda_0},\mathcal{L}
(\mathcal{X}_q,\hat{W}^1_q(\mathbb{R}^N_-))),\\
\mathcal{H}(\lambda)\in&
{\rm Anal}\,(\Sigma_{\varepsilon,\lambda_0},
\mathcal{L}(\mathcal{X}_q,W^3_q(\mathbb{R}^N)),
\end{align*} 
such that for any $\lambda\in \Sigma_{\varepsilon,\lambda_0}$ 
and 
$\mathbf{F}= (f_1,\mathbf{f}_2,f_3,\mathbf{f}_4,
\mathbf{f}_5,\mathbf{g},\mathbf{h},k,d)\in X_q$,
$\mathbf{u}_\pm=\mathcal{A}_\pm(\lambda) \widetilde{\mathbf{F}}$,
$\rho_+=\mathcal{B}_+(\lambda) \widetilde{\mathbf{F}}$,
$\pi_-=\mathcal{P}_-(\lambda) \widetilde{\mathbf{F}}$,
and $H=\mathcal{H}(\lambda)\widetilde{\mathbf{F}}$ 
are unique solutions of problem (\ref{201}).
Furthermore, for $s=0,1$, we have
\begin{align*}
\mathcal{R}_{\mathcal{L}
(\mathcal{X}_{q},L_q(\mathbb{R}^N)^{N^3+N^2+N})}
(\{(\tau\partial_\tau)^s(G_\lambda^1 \mathcal{A}_\pm(\lambda))
\mid \lambda \in \Sigma_{\varepsilon,\lambda_0}\})
\leq& c_0,\\
\mathcal{R}_{\mathcal{L}
(\mathcal{X}_{q},L_q(\mathbb{R}^N)^{N^3+N^2}
\times W^1_q(\mathbb{R}^N))}
(\{(\tau\partial_\tau)^s(G_\lambda^2 \mathcal{B}_+(\lambda))
\mid \lambda \in \Sigma_{\varepsilon,\lambda_0}\})
\leq& c_0,\\
\mathcal{R}_{\mathcal{L}(\mathcal{X}_{q},L_q(\mathbb{R}^N)^{N})}
(\{(\tau\partial_\tau)^s(\nabla \mathcal{P}_-(\lambda))
\mid \lambda  \in \Sigma_{\varepsilon,\lambda_0}\})
\leq& c_0,\\
\mathcal{R}_{\mathcal{L}
(\mathcal{X}_{q},W^2_q(\mathbb{R}^N)^{N+1})}
(\{(\tau\partial_\tau)^s(G_\lambda^3 \mathcal{H} (\lambda))
\mid \lambda  \in \Sigma_{\varepsilon,\lambda_0}\})
\leq& c_0,
\end{align*}
with some positive constant $c_0$. 
Here, $G_\lambda^1 \mathcal{A}_\pm(\lambda)
=(\lambda\mathcal{A}_\pm(\lambda), 
\lambda^{1/2}\nabla\mathcal{A}_\pm(\lambda),
\nabla^2\mathcal{A}_\pm(\lambda))$,  
$G_\lambda^2 \mathcal{B}_-(\lambda)
=(\lambda\mathcal{B}_-(\lambda),
\lambda^{1/2}\nabla^2\mathcal{B}_-(\lambda),
\nabla^3\mathcal{B}_-(\lambda))$, 
$G_\lambda^3 \mathcal{H} (\lambda)
=(\lambda\mathcal{H}(\lambda),\nabla\mathcal{H})$,
and 
$$\widetilde{\mathbf{F}}
=(f_1,\mathbf{f}_2, \lambda^{1/2}f_3, \nabla f_3,
\lambda\mathbf{f}_4, \mathbf{f}_5, 
\lambda^{1/2}\mathbf{g}, \nabla\mathbf{g},
\lambda \mathbf{h},\lambda^{1/2}\nabla\mathbf{h},
\nabla^2\mathbf{h}, \lambda k, \lambda^{1/2}\nabla k,
\nabla^2 k, d).$$			
\end{theo}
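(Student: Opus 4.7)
The plan is to follow Shibata's program: construct $\CR$-bounded solution operator families by reducing to an interface problem, solving it via partial Fourier transform in the tangential variable, and verifying that the resulting Fourier multipliers belong to a symbol class to which the $\CR$-bounded multiplier theorem applies. As a first step I would use the $\CR$-bounded solution operators for the Navier-Stokes-Korteweg resolvent problem in $\BR^N_+$ (with Neumann condition $\pd_N\rho_+|_+=0$) due to Saito \cite{Saito} to remove $f_1, \bff_2$, and the classical $\CR$-bounded solution operators for the Stokes resolvent problem in $\BR^N_-$ to remove $f_3, \bff_4, \bff_5$. The resulting correction of the interface data $(\bg, \bh, k, d)$ is $\CR$-bounded by $\widetilde{\mathbf{F}}$, so it suffices to treat the system with homogeneous bulk equations and modified interface data.

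\textbf{Fourier analysis and ODE resolution.} Applying the partial Fourier transform $\CF_{x'}$ in $x'=(x_1,\dots,x_{N-1})$ reduces this problem, for each $(\lambda,\xi')\in\Sigma_{\varepsilon,\lambda_0}\times\BR^{N-1}$, to a system of ODEs in $x_N$. On the compressible side the Korteweg term $\rho_{*+}\kappa_{*+}\nabla\Delta\rho_+$ raises the order in $\rho_+$, and the characteristic polynomial factors into an ``incompressible-type'' root $A_+ = \sqrt{|\xi'|^2 + \rho_{*+}\lambda/\mu_{*+}}$ and two ``Korteweg-type'' roots $B_\pm(\lambda,|\xi'|)$; the assumptions $\eta_*\neq 0$ and $\kappa_{*+}\neq\mu_{*+}\nu_{*+}$ (cf.\ Lemma \ref{lem601}) ensure that these three roots are distinct and have positive real parts uniformly bounded below on $\Sigma_{\varepsilon,\lambda_0}$. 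On the incompressible side, $\hat\pi_-$ is harmonic (root $|\xi'|$) while $\hat\bu_-$ combines the modes $|\xi'|$ and $A_- = \sqrt{|\xi'|^2 + \rho_{*-}\lambda/\mu_{*-}}$. The bounded-in-$x_N$ solutions are thus linear combinations of exponential modes with unknown coefficient functions of $(\lambda,\xi')$, which together with $\hat H(\xi')$ match the number of Fourier-transformed interface conditions in \eqref{201}.

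\textbf{Lopatinskii analysis and multiplier estimates.} Substituting the ansatz into the interface conditions produces a linear algebraic system whose coefficient matrix $L(\lambda,\xi')$ -- the Lopatinskii matrix -- must be shown to be invertible, with inverse satisfying symbol-type estimates of the form $|(\tau\pd_\tau)^s\pd_{\xi'}^\alpha L(\lambda,\xi')^{-1}|\le C_\alpha(|\lambda|^{1/2}+|\xi'|)^{-m_\alpha}$ on $\Sigma_{\varepsilon,\lambda_0}\times(\BR^{N-1}\setminus\{0\})$. This is the principal technical obstacle: the surface-tension contributions $\sigma_\pm|\xi'|^2$, the factor $\lambda$ coming from the kinematic equation for $H$, the stress- and mass-transport balances, and the density jump $\rho_{*-}-\rho_{*+}\neq 0$ all enter $\det L$ in a non-trivial way, and one must verify that $\det L$ is bounded below by a nonvanishing polynomial in $(|\lambda|^{1/2},|\xi'|)$ once $\lambda_0$ is chosen sufficiently large, after carefully tracking cancellations involving the extra Korteweg modes $B_\pm$. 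Once this is done, each of $\bu_\pm, \rho_+, \pi_-, H$ admits an explicit representation as the inverse Fourier transform of a multiplier lying in Shibata's symbol class $\mathbf{M}_{-\ell,2}$, and uniqueness, together with the required $\CR$-boundedness estimates for $(\tau\pd_\tau)^s$, $s=0,1$, follow by applying the $\CR$-bounded Fourier multiplier theorem component by component.
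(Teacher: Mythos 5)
Your proposal follows essentially the same route as the paper: reduce to an interface problem via the one-phase $\CR$-bounded solution operators of Saito (Navier--Stokes--Korteweg in $\BR^N_+$) and Shibata (Stokes in $\BR^N_-$), apply the partial Fourier transform in $x'$, identify the root structure ($B_+,\,t_1,\,t_2$ on the Korteweg side, $A,\,B_-$ on the Stokes side, distinct under $\eta_*\ne 0$ and $\kappa_{*+}\ne\mu_{*+}\nu_{*+}$) and the Lopatinskii determinant, and conclude via the multiplier symbol classes and the $\CR$-bounded Fourier multiplier theorem. The only notable organizational difference is that the paper lets the one-phase results (Lemmas \ref{Th03}, \ref{Th04}) absorb all of $\bg$ and $k$ as well, so that the remaining interface Fourier analysis carries only $h_m$ and $d$ and the Lopatinskii system reduces to a $2\times 2$ matrix $L$ (Sect.\ \ref{Sect.Lopatinski}) plus a separate scalar symbol $\lambda+K_H$ (Sect.\ \ref{Sect.surface}), whereas you keep the full interface data in one block; this is a choice of bookkeeping, not of method.
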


\begin{rema}
(1) The uniqueness of solutions of problem
(\ref{201}) follows from the existence of solutions 
for a dual problem in a similar way to Shibata and Shimizu
\cite[Sect. 3]{SS2012}, so that we omit its proof.

(2) It is easy to show the existence of $\mathcal{R}$-bounded 
solution operator families for the resolvent problem 
arising from (\ref{11003}), (\ref{11004}), and (\ref{11005}). 
In fact, when we employ the similar argumentation to that
in the proof of Theorem \ref{Th02} given in the sequel.
Hence, we do not consider problem (\ref{11003}), (\ref{11004}), 
and (\ref{11005}) in this paper.

(3) We can show the maximal $L_p\mathchar`-L_q$ regularity
theorem for (\ref{11000}), (\ref{11001}), (\ref{11002}),
(\ref{11003}), (\ref{11004}), and (\ref{11005}) 
due to the same theory as in Shibata \cite{Shibata2016}
with the help of the $\mathcal{R}$-bounded solution operator
and the operator valued Fourier multiplier theorem
of Weis \cite{Weis2001}. 			
\end{rema}

This paper is organized as follows.
In Sect. \ref{modeling}, according to the argument due to 
Pr\"{u}ss {\it et al}. \cite{PSSS2012}
(cf. Pr\"{u}ss and Simonett \cite{PS2016}
and Shibata \cite{Shibata2014})
we explain the interface condition \eqref{40000} 
in more detail from the point of conservation of mass,
conservation of momentum, conservation of energy and
increment of entropy and 
we show the complete model. 
In Sect. \ref{Sect.Result}, we introduce some results 
of half spaces.
From Sect.. \ref{Sect.solution} to Sect. \ref{Sect.Lopatinski},
we consider the problem without the surface tension.
In Sect. \ref{Sect.solution}, by the partial Fourier transform,
we have ordinary differential equations with respect to $x_N$.
Then, we solve them and apply the inverse partial 
Fourier transform to its solution in order 
to obtain exact solution formulas to the resolvent problem.
In Sect. \ref{Sect.multipliers}, we introduce some technical
lemmas and give some estimates for the multipliers appearing
in the solution formula.
In Sect. \ref{Sect.Lopatinski}, we analyze the Lopatinski
determinant appearing in the solution formula.
Finally in Sect. \ref{Sect.surface}, 
we prove the main theorem for the 
$\mathcal{R}$-bounded solution operator families.	


\section{Derivation of interface conditions}\label{modeling}

In this section, assuming that the equation \eqref{10000} holds in the bulk
$\dot\Omega_t$, we derive interface conditions \eqref{40000} 
under which  balance of mass, balance of momentum,
balance of energy, and entropy production hold. 
We follow the argument due to Pr\"{u}ss {\it et al}. in \cite{PSSS2012}.
Our model is, however, different from Pr\"{u}ss {\it et al}.
\cite{PSSS2012}, and so we give a detailed explanation. 

For our purpose we may assume that integration appearing below 
is finite. In this sense, our argument below is rather formal from
the integrability point of view \footnote{In order to make the discussion 
in this section rigorous, 
it is enough to assume that the domain is bounded and the outer
boundary conditions are imposed like Pr\"{u}ss {\it et al}. \cite{PSSS2012}}.
In addition, we assume that there exists 
a smooth diffeomorphism $\phi_t:\BR^N \to \BR^N$ such that 
$$\dot\Omega_{t\pm}=\{x = \phi_t(y) \mid y \in \Omega_{0\pm}\},
\quad \Gamma_t = \{x = \phi_t(y) \mid y \in \Gamma_0\}.
$$
for $t > 0$. Let $\bv = (\pd_t\phi_t)(\phi_t^{-1}(x), t)$ and then,
it follows from the Reynolds transport theorem that
\begin{align}\label{2.1}
\frac{d}{dt}\int_{\dot\Omega}f\,dx  = \int_{\dot\Omega_t}
(\pd_t f + \dv(f\bv))\,dx.
\end{align}
In particular, $\bu_\Gamma\cdot \bn_t = \bv\cdot\bn_t$ on $\Gamma_t$. 
We assume that  
\begin{align}
&\pd_t\rho + \dv(\rho\bu) = 0, \label{2.2}\\
&\pd_t(\rho\bu) + \dv(\rho\bu\otimes\bu) - \dv\bT = \rho\bff,
\label{2.3}\\
&\pd_t\Big(\frac{\rho}{2}|\bu|^2 + \rho e\Big)
+ \dv\Big(\Big(\frac{\rho}{2}|\bu|^2 + \rho e\Big)\bu\Big)
-\dv(\bT\bu - \bq) = \rho\bff\cdot\bu + \rho r
\label{2.4}
\end{align}
hold in the bulk $\dot\Omega_t$. And then, we look for the interface
conditions under which the following formulas hold:
\begin{alignat}2
&\frac{d}{dt}\int_{\dot\Omega} \rho\,dx = 0
&\quad&\text{(Balance of Mass)},
\label{2.5} \\
&\frac{d}{dt}\int_{\dot\Omega} \rho\bu\,dx = \int_{\dot\Omega} \rho\bff\,dx
&\quad&\text{(Balance of Momentum)}, 
\label{2.6} \\
&\frac{d}{dt}\Bigl\{\int_{\dot\Omega_t}
\Big(\frac{\rho}{2}|\bu|^2 + \rho e\Big)\,dx + \sigma|\Gamma_t|\Bigr\}
= \int_{\dot\Omega_t}(\rho\bff\cdot + \rho r)\,dx
&\quad
&\text{(Balance of Energy)},
\label{2.7}\\
&\frac{d}{dt}\int_{\dot\Omega} \rho\eta\,dx \geq 
\int_{\dot\Omega_t}\rho\theta^{-1}r\,dx
&\quad&\text{(Entropy Production)}\label{2.8}.
\end{alignat}
where $|\Gamma_t|$ is the Hausdorff measure of $\Gamma_t$.
We know that
\begin{align}\label{2.9}
\frac{d}{dt}|\Gamma_t| 
= -\int_{\Gamma_t}H_{\Gamma_t}\bu_\Gamma\cdot\bn_t\,d\tau.
\end{align}
Here and in the sequel, $d\tau$ denotes the surface element of 
$\Gamma_t$. 
\vskip0.5pc\noindent
{\bf Balance of Mass:}\\
By \eqref{2.1}, \eqref{2.2}, and the divergence theorem of Gauss we have 
\begin{align*}
\frac{d}{dt}\int_{\dot\Omega}\rho\,dx 
= \int_{\dot\Omega_t}(\pd_t\rho + \dv(\rho\bv))\,dx
=\int_{\dot\Omega_t}\dv(\rho(\bv-\bu))\,dx
=\int_{\Gamma_t}[[\rho(\bu_\Gamma -\bu)]]\cdot\bn_t\,d\tau.
\end{align*}
Thus, to obtain \eqref{2.5} it is sufficient to assume that
$$[[\rho(\bu_\Gamma -\bu)]]\cdot\bn_t = 0 \quad\text{on $\Gamma_t$},
$$
and so we define $\j$ be 
\begin{align}\label{2.10}
\j = \rho_+(\bu_+-\bu_\Gamma)\cdot\bn_t = \rho_-(\bu_--\bu_\Gamma)\cdot\bn_t,
\end{align}
which is called the phase flux, more precisely, the interfacial mass flux. 
Since $\bu_\Gamma\cdot\bn_t= \bu_+\cdot\bn_t - \j/\rho_+
= \bu_-\cdot\bn_t - \j/\rho_-$, we have
\begin{align}\label{2.11}
\j = \frac{[[\bu]]\cdot\bn_t}{[[1/\rho]]}.
\end{align}
A phase transition takes place if $\j \not=0$. 

Furthermore, by \eqref{2.10}, we have 
\begin{align}\label{2.11*}
V_{\Gamma_t} = \bu_\Gamma\cdot\bn_t
= \Bigl(\frac{\rho_+}{\rho_+-\rho_-}\bu_+
-\frac{\rho_-}{\rho_+-\rho_-}\bu_-\Bigr)\cdot\bn_t
= \frac{[[\rho\bu]]\cdot\bn_t}{[[\rho]]}.
\end{align}
which is the kinetic condition in the case that $\j\not=0$. 

On the other hand, if $\j=0$, then we have
$\bu_\pm\cdot\bn_t = \bu_\Gamma\cdot\bn_t$ on $\Gamma_t$.
Thus, we have a usual kinetic condition:
$V_{\Gamma_t} = \bu\cdot\bn_t$. If $\Gamma_t$ is 
defined by $F(x, t) = 0$ locally, then 
$F(\phi_t(y), t) = 0$ for $y \in \Gamma_0$. Thus, we have
$$ 0 = \frac{d}{dt}F(\phi_t(y),t) = \pd_t F + (\nabla F)\cdot\bu_\Gamma.$$
Since $\bn_t$ is parallel to $\nabla F$, we have 
$(\nabla F)\cdot\bu_\Gamma = (\nabla F)\cdot\bu_\pm$ on
$\Gamma_t$, and so we have
$$\pd_t F + \bu\cdot\nabla F= 0\quad\text{on $\Gamma_t$}.$$
This is a different representation formula of kinetic 
condition when the phase transition does not take place. 
\vskip0.5pc\noindent
{\bf Balance of Momentum:} \\
We will prove that  it follows from the balance of momentum that
\begin{align}\label{2.12}\left\{\begin{aligned}
\rho(\pd\bu + \bu\cdot\nabla\bu) -\dv\bT = \rho\bff
\qquad&\text{in $\dot\Omega_t$}, \\
\j[[\bu]]-[[\bT\bn_t]] = -\sigma H_{\Gamma_t}\bn_t
\qquad&\text{on $\Gamma_t$}.
\end{aligned}\right.\end{align}
In fact, we write  \eqref{2.3} componentwise as 
$$\pd_t(\rho u_i) + \dv(\rho u_i\bu) = \sum_{j=1}^N\pd_jT_{ij}
= \rho f_i\quad\text{in $\dot\Omega$}.
$$
And then, by \eqref{2.1} we have
\begin{align*}
\frac{d}{dt}\int_{\dot\Omega} \rho u_i\,dx
& = \int_{\dot\Omega} (\pd_t(\rho u_i) + \dv(\rho u_i\bv)\,dx\\
&= \int_{\dot\Omega_t}\Big(\dv(\rho u_i(\bv-\bu))+ \sum_{j=1}^N\pd_jT_{ij}
+ \rho f_i\Big)\,dx \\
& = \int_{\Gamma_t}([[\rho u_i(\bu_\Gamma-\bu)]]\cdot\bn_t
+(\text{$i^{\rm th}$ component of}\, [[\bT\bn_t]]))\,d\tau
+ \int_{\Omega_t}\rho f_i\,dx.
\end{align*}
By \eqref{2.10}, we have $[[\rho u_i(\bu_\Gamma-\bu)]]\cdot\bn_t
= -\j[[u_i]]$ on $\Gamma_t$, and so 
in order that \eqref{2.6} holds it is sufficient to 
assume that
$$\int_{\Gamma_t}(-\j[[\bu]] + [[\bT\bn_t]])\,d\tau=0,
$$
which leads to
$$
-\j[[\bu]]+[[\bT\bn_t]]=\dv_{\Gamma_t} \bT_{\Gamma_t}
\quad\text{on $\Gamma_t$},
$$
where $\bT_{\Gamma_t}$ denotes surface stress and $\dv_{\Gamma_t}$ denotes the
surface divergence. When we consider  surface tension on
$\Gamma_t$, we have
$$\dv_{\Gamma_t} \bT_{\Gamma_t} = \sigma\dv_{\Gamma_t}\bn_t 
= \sigma \Delta_{\Gamma_t}x = \sigma H_{\Gamma_t} \bn_t
$$
where $x$ is a position vector of $\Gamma_t$.
Thus, we have the interface condition: 
$$\j[[\bu]] - [[\bT\bn_t]] = -\sigma H_{\Gamma_t}\bn_t.
\quad\text{on $\Gamma_t$}.
$$
Moreover, by \eqref{2.2} we have
\begin{align*}
\pd_t(\rho\bu) + \dv(\rho\bu\otimes\bu)
= (\pd_t\rho + \dv(\rho\bu))\bu
+ \rho(\pd_t\bu + \bu\cdot\nabla\bu)
= \rho(\pd_t\bu + \bu\cdot\nabla\bu),
\end{align*}
and so we can rewrite \eqref{2.3} as 
$$\rho(\pd\bu + \bu\cdot\nabla\bu) -\dv\bT = \rho\bff
\quad\text{in $\dot\Omega_t$}.$$
\vskip0.5pc\noindent
{\bf Balance of Energy:}\\
We will prove that it follows from the balance of energy that 
\begin{align}\label{2.13}\left\{\begin{aligned}
\rho(\pd_te + \bu\cdot\nabla e) + \dv \bq
-\bT:\nabla\bu = \rho r
\qquad&\text{in $\dot\Omega_t$},\\
\frac{\j}{2}[[|\bu-\bu_\Gamma|^2]] + \j[[e]]
- [[\bT(\bu-\bu_\Gamma)]]\cdot\bn_t
+ [[\bq]]\cdot\bn_t = 0 \qquad&\text{on $\Gamma_t$}.. 
\end{aligned}\right.\end{align}
In fact, by \eqref{2.1}, \eqref{2.9} and \eqref{2.4}, we have
\begin{align*}
&\frac{d}{dt}\Bigl\{\int_{\dot\Omega_t}(\frac{\rho}{2}|\bu|^2
+\rho e)\,dx + \sigma|\Gamma_t|\Bigr\}\\
& = \int_{\dot\Omega_t}\Bigl\{\pd_t\Big(\frac{\rho}{2}|\bu|^2+\rho e\Big)
+ \dv\Big(\Big(\frac{\rho}{2}|\bu|^2 +\rho e\Big)\bv\Big)\Bigl\}\,dx
- \sigma\int_{\Gamma_t} H_{\Gamma_t}\bu_\Gamma\cdot\bn_t\,d\tau
\\
& = \int_{\dot\Omega}\Bigl\{\dv\Big(\Big(\frac{\rho}{2}|\bu|^2 +\rho e\Big)
(\bv-\bu)\Big)+\dv(\bT\bu-\bq) + \rho\bff\cdot\bu + \rho r\Bigl\}\,dx 
- \sigma\int_{\Gamma_t} H_{\Gamma_t}\bu_\Gamma\cdot\bn_t\,d\tau \\
& = \int_{\Gamma_t}\Bigl\{\Big[\Big[\Big(\frac{\rho}{2}|\bu|^2 + \rho e\Big)
(\bu_\Gamma - \bu)\Big]\Big]\cdot\bn_t + [[\bT\bu - \bq]]\cdot\bn_t
- \sigma H_{\Gamma_t}\bu_\Gamma\cdot\bn_t\Bigl\}\,d\tau
+ \int_{\dot\Omega} (\rho\bff\cdot\bu + \rho r)\,dx.
\end{align*}
If we assume that 
$$\Big[\Big[\Big(\frac{\rho}{2}|\bu|^2 + \rho e\Big)(\bu_\Gamma - \bu)\Big]\Big]
\cdot\bn_t + [[\bT\bu - \bq]]\cdot\bn_t
- \sigma H_{\Gamma_t}\bu_\Gamma\cdot\bn_t = 0,
$$
then we have the balance of energy \eqref{2.7}. By \eqref{2.10}
\begin{align*}
\Big[\Big[\Big(\frac{\rho}{2}|\bu|^2 + \rho e\Big)(\bu_\Gamma- \bu)\Big]\Big]
\cdot\bn_t
&= -\frac{\j}{2}[[|\bu|^2]] - \j[[e]] 
= -\frac{\j}{2}[[|\bu-\bu_\Gamma|^2]]-\j[[\bu]]\cdot\bu_\Gamma
-\j[[e]].
\end{align*}
Noting that $\bT$ is a symmetric matrix, by \eqref{2.12} we have
\begin{align*}
[[\bT\bu-\bq]]\cdot\bn_t - \sigma H_\Gamma\bu_\Gamma\cdot\bn_t
&=
[[\bT(\bu-\bu_\Gamma)]]\cdot\bn_t +
[[\bT\bn_t]]\cdot\bu_\Gamma
-[[\bq]]\cdot\bn_t - \sigma(H_{\Gamma_t}\bn_t)\cdot\bu_\Gamma\\
& = [[\bT(\bu-\bu_\Gamma)]]\cdot\bn_t - [[\bq]]\cdot\bn_t
+ \j[[\bu]]\cdot\bu_\Gamma.
\end{align*}
Putting these formulas together gives
the following interface condition:
$$
\frac{\j}{2}[[|\bu-\bu_\Gamma|^2]] + \j[[e]]
- [[\bT(\bu-\bu_\Gamma)]]\cdot\bn_t
+ [[\bq]]\cdot\bn_t = 0 \quad\text{on $\Gamma_t$}. 
$$
By \eqref{2.2} and \eqref{2.3}, we rewrite \eqref{2.4} as
\begin{align*}
&\pd_t\Big(\frac{\rho}{2}|\bu|^2 + \rho e\Big)
+ \dv\Big(\Big(\frac{\rho}{2}|\bu|^2
+ \rho e\Big)\bu\Big)-\dv(\bT\bu - \bq)\\
&= \Big(\frac12|\bu|^2+e\Big)(\pd_t\rho + \dv(\rho \bu))
+ \rho(\bu\cdot\bu_t + e_t)
+ \rho\bu\cdot\nabla\Big(\frac12|\bu|^2 + e\Big)
-(\dv\bT)\cdot\bu - \bT:\nabla \bu + \dv\bq\\
& = \rho(\pd_t e + \bu\cdot\nabla e) + \rho\bff\cdot\bu - 
\bT:\nabla\bu + \dv\bq,
\end{align*}
where we have set $\bT:\nabla\bu = \sum_{i,j=1}^NT_{ij}\pd_iu_j$. 
Putting this and \eqref{2.4} together gives
$$\rho(\pd_t e+ \bu\cdot\nabla e) + \dv \bq
-\bT:\nabla\bu = \rho r
\quad\text{in $\dot\Omega_t$}.
$$
\vskip0.5pc\noindent
{\bf Entropy Production:} \\
We now introduce the fundamental thermodynamic relations 
which read
\begin{align}\label{entropy-assump:1}
\frac{\pd e}{\pd \eta} = \theta, \quad
e = \psi + \theta\eta, \quad \eta = -\frac{\pd \psi}{\pd \theta},
\quad \kappa_v = \frac{\pd e}{\pd \theta},
\quad \ell = \theta\Big[\Big[\frac{\pd \psi}{\pd \theta}\Big]\Big] 
= -\theta[[\eta]].
\end{align}
The quantities $\kappa_v$ and $\ell$ are called heat capacity and latent heat,
respectively.  We assume that
\begin{align}\label{entropy-assump:2}
\kappa_v = \frac{\pd e}{\pd \theta} > 0, \quad \theta > 0.
\end{align}

As constitutive laws in the phases, for the compressible viscous fluid part,
$\Omega_{t+}$, we employ the Korteweg's law for the stress tensor 
and the Dunn-Serrin law for the energy flux, while for the
incompressible viscous fluid part, $\Omega_{t-}$, we employ
the Newton's law for the stress tensor and Fourier's law for the 
energy flux.  Namely, we assume that 
\vskip0.5pc\noindent
{\bf Constitutive Law in the Phases:}
\begin{alignat*} 2
\bT_+  &= \bS_+ +\bK_+ - \pi_+\bI, 
\quad \bS_+ = \mu_+\bD(\bu_+) +(\nu_+-\mu_+)\dv\bu_+ - \pi_+\bI,&&\\
\bK_+ &= (\alpha_0(\rho_+)|\nabla\rho_+|^2 + \alpha_1(\rho_+)\Delta\rho_+)\bI
+ \beta(\rho_+)\nabla\rho_+\otimes\nabla\rho_+, \\
\bq_+ &= -d_+\nabla\theta_+ + (\kappa_+(\rho_+) \rho_+ \dv \bu_+)\nabla \rho_+
&\quad &\text{in $\Omega_{t+}$}, \\
\bT_- & = \bS_- - \pi_-\bI,
\quad \bS_- = \mu_-\bD(\bu_-) - \pi_-\bI,
\quad \bq = -d_-\nabla\theta_- &\quad&\text{in $\Omega_{t-}$}.
\end{alignat*}
Here, $\bD(\bu) = (1/2)(\nabla \bu + {}^\top\nabla\bu)$. 
To ensure non-negative entropy production in the bulk $\Omega_{t \pm}$,
we assume that
\begin{align}\label{const-assump:1}
\alpha_0 &= \frac{\kappa_+ + \rho_+\kappa'_+}{2}|\nabla\rho_+|^2,
\quad \alpha_1 = \kappa_+\rho_+, \quad \beta= - \kappa_+,\\
\mu_\pm &> 0, \quad \nu_+ > \frac{N-1}{N}\mu_+,
\quad d_\pm > 0.\nonumber
\end{align}
Assuming 
$\bq_+ = -d_+\nabla\theta_+ + (\kappa_+(\rho_+) \rho_+ \dv \bu_+)\nabla \rho_+$, 
Dunn and Serrin \cite{DS1985} proved that $\bK_+$ should 
equal   
$(1/2)(\kappa_++\rho_+\kappa_+')|\nabla\rho_+|^2 + \kappa_+\rho_+\Delta
\rho_+)\bI - \kappa_+\nabla\rho_+\otimes\nabla\rho_+$ to ensure non-negative
entropy production. But, in the following, assuming that 
$\bK_+ = (\alpha_0(\rho_+)|\nabla\rho_+|^2 + \alpha_1(\rho_+)\Delta\rho_+)\bI
+ \beta(\rho_+)\nabla\rho\otimes\rho$, we prove that
$\bq_+ = -d_+\nabla\theta_+ + (\kappa_+(\rho_+) \rho_+ \dv \bu_+)\nabla \rho_+$,
and $\alpha_0$, $\alpha_1$ and $\beta$ should be given as in
\eqref{compressible-assumption} to ensure non-negative
entropy production.  Namely, our argument below is just opposite direction to
Dunn and Serrin \cite{DS1985}.

We also assume the following.
\vskip0.5pc\noindent
{\bf Constitutive Law on the Interface  $\Gamma_t$:} 
\begin{align}\label{const:1}
[[\theta]]=0, \quad
[[\bu-(\bu\cdot\bn_t)\bn_t]]=0,
\quad(\nabla\rho_+)\cdot\bn_t=0
\quad\text{on $\Gamma_t$}.
\end{align}
Hence in our model, the temperature and the tangential part of
velocity field are continuous across the interface,
and the interstitial working does not take place in the normal direction
of boundary $\Gamma_{t}$.
In addition, the third boundary condition in (\ref{const:1})
ensures the Fourier law and the generalized Gibbs-Thomson law,
which we will explain below.

We first consider 
$$\frac{d}{dt}\int_{\Omega_{t+}}\rho_+\eta_+\,dx.$$
We assume that $e_+ = e_+(\eta_+, \rho_+, \nabla_+\rho_+)$. 
Let $z_j$ be a variable corresponding to $\pd_j\rho_+$. 
We then have
\begin{align*}
\pd_t e_++ \bu_+\cdot\nabla e_+ 
=& \frac{\pd e_+}{\pd \eta_+}(\pd_t\eta_+ + \bu_+\cdot\nabla\eta_+)
+ \frac{\pd e_+}{\pd \rho_+}(\pd_t\rho_++\bu_+\cdot\nabla\rho_+)\\
&+ \sum_{j=1}^N\frac{\pd e_+}{\pd z_j}(\pd_t\pd_j\rho_+ + \bu_+\cdot
\nabla\pd_j\rho_+).
\end{align*}
By \eqref{2.2}, we have 
\begin{align}\label{const:2}
\pd_t\rho_+ + \bu_+\cdot\nabla\rho_+ = -\rho_+\dv\bu_+.
\end{align}
Differentiating \eqref{const:2}  by $x_j$, we have
\begin{align}\label{const:3}
\pd_t\pd_j\rho_+ + \bu_+\cdot\nabla\pd_j\rho_+
=-(\pd_j\bu_+)\cdot(\nabla \rho_+) -\pd_j(\rho_+\dv\bu_+).
\end{align}
Inserting \eqref{const:2} and \eqref{const:3}
and using the assumption:
$\pd e_+/\pd\eta_+ = \theta_+$,  we have
\begin{align*} \pd_te_+ + \bu_+\cdot\nabla e_+
=& \theta_+(\pd_t\eta_+ + \bu_+\cdot\nabla\eta_+)
-\frac{\pd e_+}{\pd \rho_+}\rho_+\dv\bu_+ \\
&-\sum_{j=1}^N\frac{\pd e_+}{\pd z_j}((\pd_j\bu_+)\cdot(\nabla\rho_+)
+ \pd_j(\rho_+\dv\bu_+)).
\end{align*}
On the other hand, by \eqref{2.13}, we have
$$\rho_+(\pd_te_+ +\bu_+\cdot\nabla e_+)
= -\dv \bq_+ + \bT_+:\nabla\bu_+ +\rho_+ r_+,
$$
and so 
\begin{align}\label{entropy:0}
\rho_+\theta_+(\pd_t\eta_+ + \bu_+\cdot\nabla\eta_+)
=& -\dv\bq_+ + \bT_+:\nabla\bu_+ +
\frac{\pd e_+}{\pd \rho_+}\rho_+^2\dv\bu_+\\
&+ \sum_{j=1}^N\frac{\pd e_+}{\pd z_j}\rho_+
((\pd_j\bu_+)\cdot(\nabla\rho_+) + \pd_j(\rho_+dv\bu_+))
+\rho_+ r_+.\nonumber
\end{align}
Using \eqref{2.2}, we have $\pd_t(\rho\eta) + \dv(\rho\eta\bu)
= \rho(\pd_t \eta + \bu\cdot\nabla\eta)$,
and so by \eqref{2.1} we have
\begin{align}\label{entropy:1}
\frac{d}{dt}\int_{\dot\Omega} \rho\eta\,dx
&= \int_{\dot\Omega}(\pd_t(\rho\eta) + \dv(\rho\eta\bv))\,ds \\
&= \int_{\dot\Omega} \rho(\pd_t\eta + \bu\cdot\nabla\eta)\,dx
+ \int_{\dot\Omega} \dv(\rho\eta(\bv-\bu)\,dx.\nonumber
\end{align}
From \eqref{entropy:0} we have
\begin{align}\label{entropy:3}
&\rho_+(\pd_t\eta_+ + \bu_+\cdot\nabla\eta_+) -\theta_+^{-1}\rho_+ r_+\\
&=\theta_+^{-1}\Big\{-\dv\bq_+ + \bT_+:\nabla\bu_+ +
\frac{\pd e_+}{\pd \rho_+}\rho_+^2\dv\bu_+
+ \sum_{j=1}^N\frac{\pd e_+}{\pd z_j}\rho_+
((\pd_j\bu_+)\cdot(\nabla\rho_+) + \pd_j(\rho_+\dv\bu_+))\Big\}.\nonumber
\end{align}

We now assume that 
\begin{align*}\bS_+ =& \mu_+\bD(\bu_+) + (\nu_+-\mu_+)\dv\bu_+\bI-\pi_+\bI,\\
\bK_+ =& (\alpha_0(\rho_+)|\nabla\rho_+|^2 + \alpha_1(\rho_+)\Delta\rho_+)\bI
+ \beta(\rho_+)\nabla\rho_+\otimes\rho_+.
\end{align*}
Since $\bD(\bu) = (D_{ij}(\bu))$
is symmetric, setting $\bu={}^\top(u_1, \ldots, u_N)$, we have 
$$\bD(\bu):\nabla\bu = \sum_{i,j=1}^N D_{ij}(\bu)\pd_iu_j 
= \frac12\sum_{i,j=1}^N D_{ij}(\bu)
(\pd_iu_j + \pd_ju_i) = |\bD(\bu)|^2.$$
In the same manner, we have
\begin{align*}
\nabla\rho_+\otimes\nabla\rho_+:\nabla\bu_+
=& \sum_{i,j=1}^N(\pd_i\rho_+)(\pd_j\rho_+)(\pd_iu_{j+})\\
=&\frac12 \sum_{i,j=1}^N(\pd_i\rho_+)(\pd_j\rho_+)(\pd_iu_{j+}+\pd_ju_{i+}) \\
=& \sum_{i,j=1}^N
D_{ij}(\bu_+)(\pd_i\rho_+)(\pd_j\rho_+).
\end{align*}
We then have
\begin{align*}
&\bT_+:\nabla \bu_+ \\
&= \sum_{i,j=1}^N\{\mu_+ D_{ij}(\bu_+) 
+((\nu_+-\mu_+)\dv\bu_+ - \pi_+ + \alpha_0|\nabla\rho_+|^2
+\alpha_1\Delta\rho_+)\delta_{ij} +\beta(\pd_i\rho_+)(\pd_j\rho_+)\}\pd_iu_{j+}\\
& = \mu_+|\bD(\bu_+)|^2 + (\nu_+-\mu_+)(\dv\bu_+)^2 
+(-\pi_+ +\alpha_0|\nabla\rho_+|^2 + \alpha_1\Delta\rho_+)\dv\bu_+\\
&\quad + \beta\sum_{i,j=1}^ND_{ij}(\bu_+)(\pd_i\rho_+)(\pd_j\rho_+).
\end{align*}
Using chain rule and the fact that 
$(\nabla \alpha_1)\cdot(\nabla\rho_+)= \alpha'_1|\nabla\rho_+|^2$,  we have
\begin{align*}
\theta_+^{-1}\alpha_1(\Delta\rho_+)\dv\bu_+ =&
\dv(\theta_+^{-1}\alpha_1(\nabla\rho_+)\dv\bu_+) 
- (\nabla(\theta_+^{-1}\alpha_1\dv\bu_+))\cdot\nabla\rho_+ \\
=&\dv(\theta_+^{-1}\alpha_1(\nabla\rho_+)\dv\bu_+ 
+\theta_+^{-2}\alpha_1(\nabla\rho_+)\dv\bu_+)\cdot(\nabla\theta_+)\\
&-\theta_+^{-1}\alpha'_1|\nabla\rho_+|^2\dv\bu_+ 
-\theta_+^{-1}\alpha_1(\nabla\dv\bu_+)\cdot\nabla\rho_+, \\
\theta_+^{-1}\dv\bq_+ =& \dv(\theta_+^{-1}\bq_+) 
+\theta_+^{-2}\bq_+\cdot(\nabla\theta_+).
\end{align*}
Inserting the formulas above into \eqref{entropy:3}, we have
\begin{align}
&\rho_+(\pd_t\eta_+ + \bu_+\cdot\eta_+) 
-\theta_+^{-1}\rho_+ r_+ \label{entropy:4}\\
&= -\dv(\theta_+^{-1}\bq_+) - \theta_+^{-2}\bq_+\cdot(\nabla\theta_+)
+\dv(\theta_+^{-1}\alpha_1(\nabla\rho_+)\dv\bu_+) 
+\theta_+^{-2}(\alpha_1(\nabla\rho_+)\dv\bu_+)\cdot(\nabla\theta_+)\nonumber\\
&\quad+\theta_+^{-1}\Big\{\mu_+|\bD(\bu_+)|^2 + (\nu_+-\mu_+)(\dv\bu_+)^2 
+(-\pi_+ + (\alpha_0-\alpha_1')|\nabla\rho_+|^2)\dv\bu_+\nonumber\\
&\quad+ \beta\sum_{i,j=1}^ND_{ij}(\bu_+)(\pd_i\rho_+)(\pd_j\rho_+)
+ \frac{\pd e_+}{\pd \rho_+}\rho_+^2\dv\bu_+\nonumber\\
&\quad+ \sum_{j=1}^N\frac{\pd e_+}{\pd z_j}\rho_+
((\pd_j\bu_+)\cdot(\nabla\rho_+) + (\pd_j\rho_+)\dv\bu_+
+ \rho_+(\pd_j\dv\bu_+))
-\alpha_1(\nabla\dv\bu_+)\cdot\nabla\rho_+\Big\}.\nonumber
\end{align}
We now assume that
\begin{align}\label{entropy-assump:3}
\frac{\pd e_+}{\pd z_j}= \frac{\alpha_1}{\rho_+^{2}}\pd_j\rho_+,
\end{align}
to obtain
$$\sum_{j=1}^N\frac{\pd e_+}{\pd z_j}\rho_+^2\pd_j\dv\bu_+
= \alpha_1(\nabla\dv\bu_+)\cdot(\nabla\rho_+).
$$
In particular, we have
\begin{align}\label{energy:1}
e_+= \frac{\alpha_1}{2\rho_+^{2}}|\nabla\rho_+|^2 + e'_+(\eta_+, \rho_+).
\end{align}
with some function $e'_+(\eta_+, \rho_+)$. 
Thus, we have
$$\frac{\pd e_+}{\pd \rho_+} = \frac{\alpha_1'}{2\rho_+^{2}}|\nabla\rho_+|^2 
- \frac{\alpha_1}{\rho_+^{3}}|\nabla\rho_+|^2 + \frac{\pd e'_+}{\pd \rho_+},$$
and so we have 
\begin{align*}
&\frac{\pd e_+}{\pd \rho_+}\rho_+^2\dv\bu_+
+ \sum_{j=1}^N\frac{\pd e_+}{\pd z_j}\rho_+
((\pd_j\bu_+)\cdot(\nabla\rho_+) + (\pd_j\rho_+)\dv\bu_+
+ \rho_+(\pd_j\dv\bu_+))
-\alpha_1(\nabla\dv\bu_+)\cdot\nabla\rho_+\\
& = \frac12\alpha_1'|\nabla\rho_+|^2\dv\bu_+ 
- \alpha_1\rho_+^{-1}|\nabla\rho_+|^2\dv\bu_+
+ \frac{\pd e'_+}{\pd\rho_+}\rho^2_+\dv\bu_+ \\
&\quad + \sum_{i,j=1}^N\frac{\alpha_1}{\rho_+}
D_{ij}(\bu_+)(\pd_i\rho_+)(\pd_j\rho_+)
+ \frac{\alpha_1}{\rho_+}|\nabla\rho_+|^2\dv\bu_+,
\end{align*}
which, combined with \eqref{entropy:4}, leads to 
\begin{align*}
&\rho_+(\pd_t\eta_+ + \bu_+\cdot\eta_+)  -\theta_+^{-1}\rho_+ r_+\\
&= -\dv(\theta_+^{-1}\bq_+) - \theta_+^{-2}\bq_+\cdot\nabla\theta_+
+\dv(\theta_+^{-1}\alpha_1(\nabla\rho_+)\dv\bu_+) +
\theta_+^{-2}(\alpha_1(\nabla\rho_+)\dv\bu_+)\cdot(\nabla\theta_+)
\nonumber \\
&+\theta_+^{-1}\Bigl\{\mu_+|\bD(\bu_+)|^2 + (\nu_+-\mu_+)(\dv\bu_+)^2\\ 
&+\Big(-\pi_+ + \Big(\alpha_0-\frac{1}{2}\alpha_1'\Big)|\nabla\rho_+|^2  + 
\frac{\pd e'_+}{\pd\rho_+}\rho^2_+\Big)\dv\bu_+
+ \Big(\beta+\frac{\alpha_1}{\rho_+}\Big)
\sum_{i,j=1}^ND_{ij}(\bu_+)(\pd_i\rho_+)(\pd_j\rho_+)
\Bigl\}.
\end{align*}
We now assume that 
$\bq_+= -d_+\nabla\theta_+ + \alpha_1\dv\bu_+(\nabla\rho_+)$, 
that is we employ the Dunn-Serrin law for the energy flux. 
Furthermore,  we assume that 
\begin{align}\label{entropy:5}
-\pi_++ \Big(\alpha_0-\frac{1}{2}\alpha_1'\Big) 
+ \frac{\pd e'_+}{\pd \rho_+}\rho_+^2=0,
\quad \beta + \frac{\alpha_1}{\rho_+} = 0.
\end{align}
We then have
\begin{align}\label{entropy:6}
\rho_+(\pd_t\eta_+ + \bu_+\cdot\nabla\eta_+)  
=& \dv(\theta_+^{-1}d_+\nabla\theta_+) + \theta_+^{-2}d_+|\nabla\theta_+|^2\\
&+ \theta_+^{-1}(\mu_+|\bD(\bu_+)|^2 + (\nu_+-\mu_+)(\dv\bu_+)^2)
+\theta_+^{-1}\rho_+ r_+\nonumber
\quad\text{in $\Omega_{t+}$}.
\end{align}
If we choose 
\begin{align}\label{const:4}
\beta = -\kappa, \quad \alpha_1 = \rho_+\kappa, \quad
\alpha_0 = \frac12\alpha_1' = \frac12(\kappa + \rho_+\kappa'),
\quad \frac{\pd e'_+}{\pd \rho_+}\rho_+^2 = \pi_+,
\end{align}
then, the formulas in \eqref{entropy:5} hold. 
In particular,  
we have the Korteweg tensor given in \eqref{compressible-assumption}.

Since $\bT_- = \bS_-$, assuming that $e_- = e_-(\rho_-, \eta_-)$,
$\bq_- = -d_-\nabla\theta_-$ and $\dfrac{\pd e_-}{\pd \eta_-} 
= \theta_-$, and $\dfrac{\pd e_-}{\pd \rho_-}\rho_-^2 = \pi_-$, we also have
\begin{align}\label{entropy:7}
\rho_-(\pd_t\eta_- + \bu_-\cdot\nabla\eta_-)  
=& \dv(\theta_-^{-1}d_-\nabla\theta_-) + \theta_-^{-2}d_-|\nabla\theta_-|^2\\
&+ \theta_-^{-1}(\mu_-|\bD(\bu_-)|^2 + (\nu_--\mu_-)(\dv\bu_-)^2) 
+ \theta_-^{-1}\rho_- r_- \quad\text{in $\Omega_{t-}$}.\nonumber
\end{align}
Putting \eqref{entropy:1}, \eqref{entropy:6}, and \eqref{entropy:7}
together 
and assuming that $[[\theta]]=0$, by the divergence theorem of
Gauss we have
\begin{align}\label{entropy:8}
\frac{d}{dt}\int_{\dot\Omega_t} \rho\eta\,dx
&= \int_{\dot\Omega_t}(\theta^{-2}d|\nabla\theta|^2 
+ \mu|\bD(\bu)|^2 + (\nu-\mu)(\dv\bu)^2)\,dx \\
&\quad+ \int_{\Gamma_t}(\theta^{-1}[[d\nabla\theta]]\cdot\bn_t -
[[\rho\eta(\bu-\bu_\Gamma)]]\cdot\bn_t)\,d\tau
+ \int_{\dot\Omega_t} \theta^{-1}\rho r\,dx.\nonumber
\end{align}Since 
$(\dv\bu)^2 \leq N|\bD(\bu)|^2$, if we assume that
\begin{align}\label{entropy:8*}
\mu_{\pm} > 0, \quad \nu_+ \geq \frac{N-1}{N}\mu_+, 
\end{align}
then we have 
$$\int_{\dot\Omega_t}(\theta^{-2}d|\nabla\theta|^2 
+ \mu|\bD(\bu)|^2 + (\nu-\mu)(\dv\bu)^2)\,dx \geq 0
$$
because $\dv \bu_-=0$ in $\Omega_{t-}$. 
Since $[[\rho\eta(\bu-\bu_\Gamma)]]\cdot\bn = \j[[\eta]]$ as follows from 
\eqref{2.10}, 
to obtain Entropy Production:
$$\frac{d}{dt}\int_{\dot\Omega_t}\rho\eta \,dx \geq \int_{\dot\Omega_t}
\rho\theta^{-1} r\,dx,$$
it is sufficient to assume that 
\begin{align}\label{stefan:1}
\j\theta[[\eta]]- [[d\nabla\theta]]\cdot\bn_t = 0
\quad\text{on $\Gamma_t$},
\end{align}
which is called the Stefan law.

We now derive the generalized Gibbs-Thomson law:
\begin{align}\label{entropy:9}
[[\psi]] + \j^2\Bigl[\Bigl[\frac{1}{2\rho^2}\Bigr]\Bigr]
-\Bigl[\Bigl[\frac{1}{\rho}\bn_t\cdot(\bT\bn_t)\Bigr]\Bigr]=0
\quad\text{on $\Gamma_t$}.
\end{align}
provided that $\j\not=0$.  Let $\tau_i$ ($i=1, \ldots, N-1$) be
the tangent vectors of $\Gamma_t$.  We then write 
$$\bu-\bu_\Gamma =<\bu-\bu_\Gamma, \bn_t>\bn_t
+ \sum_{i-1}^{N-1}<\bu-\bu_\Gamma, \tau_i>\tau_i.
$$
Using the orthogonality of $\{\tau_1, \ldots, \tau_{N-1}, \bn_t\}$, 
we have 
$$|\bu-\bu_\Gamma|^2 =
|(\bu-\bu_\Gamma)\cdot\bn_t|^2 
+\sum_{i=1}^{N-1}|(\bu-\bu_\Gamma)\cdot\tau_i|^2..
$$
Since $[[\bu - <\bu, \bn_t>\bn_t]]=0$ as follows from \eqref{const:1},
we have 
\begin{align}\label{entropy:10}
[[<\bu-\bu_\Gamma, \tau_i>]]= 0, 
\end{align}
and so by \eqref{2.10} we have 
\begin{align*}
\frac12[[|\bu- \bu_\Gamma|^2]]&= \frac12[[|(\bu-\bu_\Gamma)\cdot\bn_t|^2]]
= \frac12(|(\bu_+-\bu_\Gamma)\cdot\bn_t|^2-|(\bu_--\bu_\Gamma)\cdot\bn_t|^2)
\\
&= \frac12\Bigl(\Bigl(\frac{\j}{\rho_+}\Bigr)^2
-\Bigl(\frac{\j}{\rho_-}\Bigr)^2\Bigr)
= \frac{\j^2}{2}\Bigl[\Bigl[\frac{1}{\rho^2}\Bigr]\Bigr].
\end{align*}
Inserting this formula into the second formula in \eqref{2.13},
using the relation: $\psi = e-\theta\eta$, and recalling
the formulas: 
$\bq_+ = -d_+\nabla\theta_+ +(\kappa_+\rho_+\dv\bu_+)\nabla\rho_+$
and $\bq_- = -d_-\nabla\theta_-$, by \eqref{const:1} and \eqref{stefan:1} 
we have
\begin{align}
0 &= \frac{\j^3}{2}\Bigl[\Bigl[\frac{1}{\rho^2}\Bigr]\Bigr] 
+ \j[[\psi]]+\j\theta[[\eta]]
-[[(\bu-\bu_\Gamma)\cdot\bT\bn_t)]]-[[d\nabla\theta]]\cdot\bn_t
+(\kappa_+\rho_+\dv\bu_+)(\nabla\rho_+)\cdot\bn_+ \label{entropy:11}\\
& = \frac{\j^3}{2}\Bigl[\Bigl[\frac{1}{\rho^2}\Bigr]\Bigr] 
+\j[[\psi]]-[[(\bu-\bu_\Gamma)\cdot(\bT\bn_t)]].
\nonumber
\end{align}
We write the last term as
\begin{align*}
&[[(\bu-\bu_\Gamma)\cdot(\bT\bn_t)]]
= [[<\bu-\bu_\Gamma, \bn_t><\bn_t, \bT\bn_t)]]
+ \sum_{i=1}^{N-1}[[<\bu-\bu_\Gamma, \tau_i><\tau_i, \bT\bn_t>]]\\
&=<\bu_+-\bu_\Gamma, \bn_t><\bn_t, \bT_+\bn_t> 
-<\bu_--\bu_\Gamma, \bn_t><\bn_t, \bT_-\bn_t>\\
&+ \sum_{i-1}^{N-1}
\{<\bu_+-\bu_\Gamma, \tau_i><\tau_i, \bT_+\bn_+>
-<\bu_--\bu_\Gamma, \tau_i><\tau_i, \bT_-\bn_t>\}.
\end{align*}
By \eqref{2.10}, we have
\begin{align*}
& <\bu_+-\bu_\Gamma, \bn_t><\bn_t, \bT_+\bn_t> 
-<\bu_--\bu_\Gamma, \bn_t><\bn_t, \bT_-\bn_t> \\
&\quad= \j\Bigl(\frac{<\bn_t, \bT_+\bn_t>}{\rho_+}
-\frac{<\bn_t, \bT_-\bn_t>}{\rho_-}\Bigr)
=\j\Bigl[\Bigl[\frac{1}{\rho}\bn_t\cdot(\bT\bn_t)\Bigr]\Bigr].
\end{align*}
On the other hand, by \eqref{const:1}, \eqref{2.12},
and \eqref{entropy:11}, we have
\begin{align*}
&<\bu_+-\bu_\Gamma, \tau_i><\tau_i, \bT_+\bn_+>
-<\bu_--\bu_\Gamma. \tau_i><\tau_i, \bT_-\bn_t> \\
&\quad = <\bu_+-\bu_\Gamma, \tau_i><\tau_i, [[\bT\bn_t]]>
=<\bu_+-\bu_\Gamma, \tau_i><\tau_i, \j[[\bu]]> = 0.
\end{align*}
Summing up, we have obtained
$$[[(\bu-\bu_\Gamma)\cdot(\bT\bn_t)]]=\j\Bigl[\Big[\frac{1}{\rho}
\bn_t\cdot(\bT\bn_t)\Bigr]\Bigr],
$$
which, combined with \eqref{entropy:10}, leads to 
\eqref{entropy:9}. 
Notice that if $\j=0$, we do not have \eqref{entropy:9}. 

Finally, using $\theta$ we rewrite the first equation in \eqref{2.13}.
Since
$$\dv(\theta^{-1}d\nabla\theta) + \theta^{-2}d|\nabla\theta|^2
= \theta^{-1}\dv(d\nabla\theta),
$$
by \eqref{entropy:6}
and \eqref{entropy:7}, we have
\begin{align}\label{entropy:12}
\rho\theta(\pd_t\eta + \bu\cdot\nabla\eta)
= \dv(d\nabla\theta) + \mu|\bD(\bu)|^2
+ (\nu-\mu)(\dv\bu)^2 + \rho r
\quad\text{in $\dot\Omega_t$}.
\end{align}
By \eqref{entropy-assump:1}, 
\begin{align*}
\kappa_v = \frac{\pd e}{\pd \theta} 
= \frac{\pd}{\pd \theta}(\psi + \theta\eta)
= \frac{\pd\psi}{\pd\theta} + \eta + \theta\frac{\pd\eta}{\pd\theta}
=\theta\frac{\pd \eta}{\pd \theta}, \quad 
\frac{\pd \eta}{\pd \rho} = -\frac{\pd^2\psi}{\pd\rho\pd\theta},
\end{align*}
and so, by \eqref{2.2}  we have
\begin{align*}
\rho\theta(\pd_t\eta + \bu\cdot\nabla\eta)
&= \rho\theta\frac{\pd \eta}{\pd \theta}(\pd_t\theta + \bu\cdot\nabla\theta)
+ \rho\theta\frac{\pd \eta}{\pd \rho}(\pd_t\rho + \bu\cdot\nabla\bu)\\
&= \rho \kappa_v (\pd_t\theta + \bu\cdot\nabla\theta)
+\theta\rho^2\frac{\pd^2\psi}{\pd\rho\pd\theta}\dv\bu,
\end{align*}
which, combined with \eqref{entropy:12} leads to 
$$\rho\kappa_v(\pd_t\theta + \bu\cdot\nabla\theta)
= \dv(d\nabla\theta) + \mu|\bD(\bu)|^2
+ (\nu-\mu)(\dv\bu)^2 -\theta\rho^2\frac{\pd^2\psi}{\pd\rho\pd\theta}\dv\bu 
+ \rho r \quad\text{in $\dot\Omega_t$}.
$$
Summing up, we have obtained the following complete model. \vskip0.5pc
{\bf The Complete Model:}\\
In the bulk:
\begin{alignat*}2
&\left\{\begin{aligned}
&\pd_t\rho_+ + \dv(\rho_+\bu_+) = 0\\
&\rho_+(\pd_t\bu_+ +\bu_+\cdot\nabla\bu_+)
- \dv(\bS_++\bT_+-\pi_+\bI) = \rho_+\bff_+\\
&\rho_+\kappa_{v+}(\pd_t\theta_+ + \bu_+\cdot\nabla\theta_+)
-\dv(d_+\nabla\theta_+)-\mu_+|\bD(\bu_+)|^2 \\
&\quad
- (\nu_+-\mu_+)(\dv\bu_+)^2
+ \theta_+\rho_+^2\frac{\pd^2\psi_+}{\pd\rho_+\pd\theta_+}\dv\bu_+
= \rho_+r_+
\end{aligned}\right.
&\qquad &\text{in $\Omega_{t+}$},\\
&\left\{\begin{aligned}
&\dv \bu_- =0 \\
&\rho_-(\pd_t\bu_- + \bu_-\nabla\bu_-) - 
\dv(S_- - \pi_-\bI) = \rho_-\bff_- \\
&\rho_-\kappa_{v-}(\pd_t\theta_- + \bu_-\cdot\nabla\theta_-)
-\dv(d_-\nabla\theta_-) - \mu_-|\bD(\bu_-)|^2
=\rho_-r_-
\end{aligned}\right.
&\qquad&\text{in $\Omega_{t-}$}. 
\end{alignat*}
On the interface $\Gamma_t$: 
$$
\left\{\begin{aligned}
&[[\bu]] = \j\Bigl[\Bigl[\frac{1}{\rho}\Bigr]\Bigr]\bn_t,  
\quad \j[[\mathbf{u}]]-[[\mathbf{T}]]\mathbf{n}_t
=-\sigma H_{\Gamma_t}\mathbf{n}_t,\\
&[[\theta]]=0, \quad
\j\theta[[\eta]]-[[d\nabla\theta]]\cdot\mathbf{n}_t=0,\\
&[[\psi]] +  \cfrac{\j^2}{2}\Big[\Big[\cfrac{1}{\rho^2}\Big]\Big] 
-\Big[\Big[\cfrac{1}{\rho}(\mathbf{Tn}_t\cdot\mathbf{n}_t)\Big]\Big]=0\\
&V_t=\mathbf{u}_\Gamma\cdot\mathbf{n}_t
=\cfrac{[[\rho\mathbf{u}]]\cdot\mathbf{n}_t}{[[\rho]]},
\quad (\nabla\rho_+)\cdot\mathbf{n}_t=0.	
\end{aligned}\right.
$$



\section{Results of half spaces} 
\label{Sect.Result}	
	
In this section, we introduce some results of half spaces.
To prove Theorem \ref{Th02}, 
we consider the following systems:	
\begin{align}
\label{202}
&\left\{\begin{aligned}
\enskip\lambda\rho_++\rho_{*+}{\rm div}\,\mathbf{u}_+&=f_1 
&\text{ in $\mathbb{R}^N_+$},\\
\enskip\rho_{*+}\lambda\mathbf{u_+}
-\mu_{*+}\Delta\mathbf{u_+}
-\nu_{*+}\nabla{\rm div}\,\mathbf{u_+}
-\rho_{*+}\kappa_{*+}\Delta\nabla\rho_+
&=\mathbf{f}_2
&\text{ in $\mathbb{R}^N_+$},\\
\enskip\mu_{*+}D_{mN}(\mathbf{u}_+)|_+&=g_{m},\\
\enskip\{\mu_{*+}D_{NN}(\mathbf{u}_+)
+(\nu_{*+}-\mu_{*+}){\rm div}\,\mathbf{u}_+&\\
+\rho_{*+}\kappa_{*+}\Delta\rho_+ \}|_+ &=g_{N+1}, \\
\enskip\partial_N\rho_+|_+&=k
\end{aligned}\right.
\intertext{and}
\label{203}
&\left\{\begin{aligned}
{\rm div}\,\mathbf{u}_-=f_3={\rm div}\,\mathbf{f}_4,&
&\enskip\rho_{*-}\lambda\mathbf{u}_-
-\mu_{*-}\Delta\mathbf{u}_-+\nabla \pi_-&=\mathbf{f}_5 
&\text{ in $\mathbb{R}^N_-$},\\
\mu_{*-}D_{mN}(\mathbf{D(u_-)})|_-=0,&&\enskip 
\{\mu_{*-}D_{NN}(\mathbf{u}_-)-\pi_- \}|_-&=g_{N}.
\end{aligned}\right.
\end{align}
The existence of $\mathcal{R}$-bounded solution operators of 
(\ref{202}) and (\ref{203}) are proved by Saito \cite{Saito} 
and Shibata \cite{Shibata2014a}, respectively.	
In fact, we know the following two lemmas.
\begin{lemm}\label{Th03}{\rm (\cite{Saito})}
Let $1<q<\infty$, $\varepsilon_*<\varepsilon<\pi/2$. 
Assume that $\rho_{*+}\neq\rho_{*-}$, $\eta_*\neq 0$,
and $\kappa_{*+}\neq \mu_{*+}\nu_{*+}$. Set	
\begin{align*}
Y_{q+}=
&\{(f_1, \mathbf{f}_2, \widetilde{\mathbf{g}}, k) \mid
f_1\in W^1_q(\mathbb{R}^N_+),\enskip
\mathbf{f}_2 \in L_q(\mathbb{R}^N_+)^N,\\
&\quad
\widetilde{\mathbf{g}}=(g_{1},\dots,g_{(N-1)},g_{N+1})
\in W^1_q(\mathbb{R}^N_+)^N ,\enskip
k\in W^2_q (\mathbb{R}^N_+) \},\\
\mathcal{Y}_{q+}=
&\{(F_1,F_2, \widetilde{F}_{7+},
\widetilde{F}_{8+} , F_{12}, F_{13}, F_{14}) \mid
F_1\in W^1_q(\mathbb{R}^N_+),\enskip
F_2 \in L_q(\mathbb{R}^N_+)^N,\\
&\quad
\widetilde{F}_{7+} \in L_q(\mathbb{R}^N_+)^N,\enskip
\widetilde{F}_{8+} \in L_q(\mathbb{R}^N_+)^{N^2},\enskip
F_{12} \in L_q(\mathbb{R}^N_+)^N,\\
&\quad
F_{13} \in L_q(\mathbb{R}^N_+)^N,\enskip 
F_{14} \in L_q(\mathbb{R}^N_+)^{N^2} \}.	
\end{align*}
Then, there exists a positive constant $\lambda_0$ 
and operator families $\mathcal{A}^1_{+}(\lambda)$
and $\mathcal{B}^1_{+}(\lambda)$ with	
\begin{align*}
\mathcal{A}^1_{+}(\lambda)\in&
{\rm Anal}\,(\Sigma_{\varepsilon,\lambda_0},
\mathcal{L}(\mathcal{Y}_{q+},W^2_q(\mathbb{R}^N_+)^N),\\
\mathcal{B}^1_{+}(\lambda)\in&
{\rm Anal}\,(\Sigma_{\varepsilon,\lambda_0},
\mathcal{L}(\mathcal{Y}_{q+},W^3_q(\mathbb{R}^N_+)),
\end{align*}
such that for any $\lambda\in \Sigma_{\varepsilon,\lambda_0}$
and  $\mathbf{F}_+=(f_1,\mathbf{f}_2,
\widetilde{\mathbf{g}},k)\in Y_{q+}$,
$\mathbf{u}_+=\mathcal{A}^1_+(\lambda)\widetilde{\mathbf{F}}_+$ 
and
$\rho_+=\mathcal{B}^1_+(\lambda) \widetilde{\mathbf{F}}_+$
are unique solutions of problem (\ref{202}).
Furthermore, for $s=0,1$, we have	
\begin{align*}
\mathcal{R}_{\mathcal{L}
(\mathcal{Y}_{q+},L_q(\mathbb{R}^N)^{N^3+N^2+N})}
(\{(\tau\partial_\tau)^s(G_\lambda^1 \mathcal{A}^1_+(\lambda))
\mid \lambda \in \Sigma_{\varepsilon,\lambda_0}\})
\leq& c_0,\\
\mathcal{R}_{\mathcal{L}
(\mathcal{Y}_{q+},L_q(\mathbb{R}^N)^{N^3+N^2}
\times W^1_q(\mathbb{R}^N))}
(\{(\tau\partial_\tau)^s(G_\lambda^2 \mathcal{B}^1_+(\lambda))
\mid \lambda \in \Sigma_{\varepsilon,\lambda_0}\})
\leq& c_0,
\end{align*}	
with some positive constant $c_0$.
Here, 
$\widetilde{\mathbf{F}}_+=
(f_1,\mathbf{f}_2,\lambda^{1/2}\widetilde{\mathbf{g}},
\nabla\widetilde{\mathbf{g}},
\lambda k, \lambda^{1/2}\nabla k ,\nabla^2 k)$.				
\end{lemm}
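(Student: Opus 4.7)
The plan is to reduce \eqref{202} to a one-dimensional resolvent problem in the normal variable by applying the partial Fourier transform $\mathcal{F}'$ in the tangential variables $x'=(x_1,\dots,x_{N-1})$, then build explicit solution formulas from the decaying characteristic modes at $x_N=0$, and finally read off $\mathcal{R}$-boundedness through the Shibata--Weis multiplier calculus. First, I would use the first equation of \eqref{202} to eliminate $\rho_+$ in the bulk via $\rho_+=\lambda^{-1}(f_1-\rho_{*+}\dv\bu_+)$ and substitute into the momentum equation; this turns \eqref{202} into an elliptic system for $\bu_+$ alone containing $\Delta\bu_+$, $\nabla\dv\bu_+$, and the Korteweg term $\nabla\Delta\dv\bu_+$. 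Decomposing $\hat\bu(\xi',x_N)=\mathcal{F}'[\bu_+](\xi',x_N)$ into its tangential-solenoidal and compressible parts and looking for decaying solutions of the form $e^{-Ax_N}$ with $\mathrm{Re}\,A>0$, the characteristic equation factors into a solenoidal factor $A^2-B^2$, with $B=\sqrt{|\xi'|^2+\rho_{*+}\lambda/\mu_{*+}}$, and a compressible/Korteweg factor whose two relevant positive roots $A_1(\lambda,\xi')$ and $A_2(\lambda,\xi')$ are the genuinely new ingredient; the hypotheses $\eta_*\ne 0$ and $\kappa_{*+}\ne\mu_{*+}\nu_{*+}$ are exactly the conditions that keep $A_1,A_2,B$ separated and make each a well-behaved symbol throughout $\Sigma_{\varepsilon,\lambda_0}\times(\mathbb{R}^{N-1}\setminus\{0\})$.

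Second, the decaying solution is a linear combination of these exponential modes with vector amplitudes determined by the null-spaces of the bulk symbol associated to each root, and $\hat\rho_+$ is reconstructed from $\hat u_N$ and $\hat f_1$. Imposing the $N-1$ tangential-stress conditions, the normal Korteweg-stress condition, and the Neumann condition $\partial_N\rho_+|_+=k$ gives a square linear system for the amplitudes whose determinant is the Lopatinski symbol $L(\lambda,\xi')$. Inverting by Cramer's rule produces representations
\begin{align*}
u_{j+}(x)=\mathcal{F}'^{-1}_{\xi'}\Bigl[\sum_{\ell}m^{(1)}_{j\ell}(\lambda,\xi',x_N)\,\widehat{\mathcal{G}_\ell}(\xi')\Bigr],\qquad
\rho_+(x)=\mathcal{F}'^{-1}_{\xi'}\Bigl[\sum_{\ell}m^{(2)}_\ell(\lambda,\xi',x_N)\,\widehat{\mathcal{G}_\ell}(\xi')\Bigr],
\end{align*}
where $\mathcal{G}_\ell$ runs over the components of $\widetilde{\mathbf{F}}_+$ and each $m^{(\cdot)}$ is a rational combination of $A_1,A_2,B,|\xi'|,\lambda$ multiplied by $e^{-Ax_N}$ for one of the roots $A\in\{A_1,A_2,B\}$.

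Finally, the $\mathcal{R}$-boundedness of the operator families $\mathcal{A}^1_+(\lambda)$ and $\mathcal{B}^1_+(\lambda)$ defined by these formulas is obtained by showing that the $(\tau\partial_\tau)^s$-derivatives of the symbols appearing in $G_\lambda^1 m^{(1)}$ and $G_\lambda^2 m^{(2)}$ lie in the Shibata multiplier class of order zero in the parameter $(\lambda^{1/2},\xi')$. This is then combined with the standard lemma that order-zero parameter-dependent tangential symbols induce $\mathcal{R}$-bounded operator families on $L_q(\mathbb{R}^N_+)$, together with the Volevich-type identity
\begin{align*}
e^{-Ax_N}\varphi(0)=-\int_0^\infty \partial_{y_N}\bigl(e^{-A(x_N+y_N)}\bigr)\varphi(y_N)\,dy_N+\int_0^\infty e^{-A(x_N+y_N)}\varphi'(y_N)\,dy_N
\end{align*}
that converts boundary data into bulk integrals so that the $\lambda^{1/2}\nabla$ and $\nabla^2$ factors of $\widetilde{\mathbf{F}}_+$ are absorbed correctly. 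The principal obstacle is the Lopatinski analysis: one must show that $|L(\lambda,\xi')|$ has a uniform lower bound with the correct homogeneity throughout $\Sigma_{\varepsilon,\lambda_0}\times(\mathbb{R}^{N-1}\setminus\{0\})$ once $\lambda_0$ is chosen large enough, and track its asymptotics both as $|\xi'|\to 0$ and $|\xi'|\to\infty$ (the boundary-dominated and Korteweg-dominated regimes), where the two structural conditions $\eta_*\ne 0$ and $\kappa_{*+}\ne\mu_{*+}\nu_{*+}$ prevent the leading-order coefficients from vanishing; once $1/L$ is known to belong to the multiplier class, everything else is mechanical.
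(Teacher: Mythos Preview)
The paper does not prove this lemma: it is quoted as a known result from Saito \cite{Saito}, stated without proof as one of the two half-space building blocks (the other being Lemma~\ref{Th04} from \cite{Shibata2014a}). Your outline is nonetheless correct and is precisely the template that the paper itself carries out in Sections~\ref{Sect.solution}--\ref{Sect.Lopatinski} for the coupled interface problem: partial Fourier transform in $x'$, factorization of the characteristic polynomial into the solenoidal root $B_+$ and the two Korteweg roots $t_1,t_2$ (your $A_1,A_2$), solution of the boundary system via the Lopatinski determinant, Volevich's trick to convert traces into bulk integrals, and the multiplier-class lemmas for $\mathcal{R}$-boundedness.

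One small remark on your reduction: you propose to eliminate $\rho_+$ via $\rho_+=\lambda^{-1}(f_1-\rho_{*+}\dv\bu_+)$ and work with $\bu_+$ alone. This is legitimate on $\Sigma_{\varepsilon,\lambda_0}$, but the paper (and Saito) instead keep $\rho_+$ as an independent unknown and work directly with the system $(\partial_N^2-B_+^2)P_\lambda(\partial_N)\widehat{u}_{J+}=0$; this avoids carrying explicit $\lambda^{-1}$ factors through the symbol calculus and makes the $W^3_q$ regularity of $\rho_+$ more transparent, since $\widehat\rho_+$ is built directly from the $e^{-t_ix_N}$ modes with multipliers in $\mathbb{M}_{-1,2,\varepsilon,0}$ (compare the formulas for $\widehat\rho_+$ in \eqref{4.428} and Corollary~\ref{cor.operator}). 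Both routes lead to the same operators, but the second keeps the bookkeeping cleaner.
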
	
\begin{lemm}\label{Th04}{\rm (\cite{Shibata2014a})}
Let $1<q<\infty$, $0<\varepsilon<\pi/2$. Set
\begin{align*}
Y_{q-}=
&\{(f_3,\mathbf{f}_4,\mathbf{f}_5,g_N) \mid
f_3\in W^1_q(\mathbb{R}^N_-),\enskip
\mathbf{f}_4,\enskip
\mathbf{f}_5 \in L_q(\mathbb{R}^N_-)^N,\enskip
g_{N}
\in W^1_q(\mathbb{R}^N_-) \}
\\
\mathcal{Y}_{q-}=
&\{(F_3, F_4, F_5, F_6, \widetilde{F}_{7-},
\widetilde{F}_{8-}) \mid
F_3 \in L_q(\mathbb{R}^N_-),\enskip
F_4,F_5,F_6 \in L_q(\mathbb{R}^N_-)^N,\\
&\quad
\widetilde{F}_{7-} \in L_q(\mathbb{R}^N_-),\enskip
\widetilde{F}_{8-} \in L_q(\mathbb{R}^N_-)^N \}.
\end{align*}
Then, there exists 
operator families 
$\mathcal{A}^1_{-}(\lambda)$ and 
$\mathcal{P}^1_{-}(\lambda)$ with
\begin{align*}
\mathcal{A}^1_{-}(\lambda)\in &
{\rm Anal}\,(\Sigma_\varepsilon,\mathcal{L}
(\mathcal{Y}_{q-},W^2_q(\mathbb{R}^N_-)^N)),
\\
\mathcal{P}^1_{-}(\lambda)\in &
{\rm Anal}\,(\Sigma_\varepsilon,\mathcal{L}
(\mathcal{Y}_{q-},\hat{W}^1_q(\mathbb{R}^N_-)))
\end{align*}
such that for any $\lambda\in \Sigma_\varepsilon$ and 
$\mathbf{F}_-=(f_3,\mathbf{f}_4,\mathbf{f}_5, g_N)\in 
Y_{q-}$, 
$\mathbf{u}_-
=\mathcal{A}^1_{-}(\lambda)\widetilde{\mathbf{F}}_-$
and $\pi_-=\mathcal{P}^1_{-}(\lambda)\widetilde{\mathbf{F}}_-$
are unique solutions of problem (\ref{203}).
Furthermore, for $s=0,1$, we have
\begin{align*}
\mathcal{R}_{\mathcal{L}
(\mathcal{Y}_{q-},L_q(\mathbb{R}^N)^{N^3+N^2+N})}
(\{(\tau\partial_\tau)^s(G_\lambda^1 \mathcal{A}^1_-(\lambda))
\mid \lambda \in \Sigma_{\varepsilon,\lambda_0}\})
\leq& c_0,\\
\mathcal{R}_{\mathcal{L}(\mathcal{Y}_{q-},L_q(\mathbb{R}^N)^{N})}
(\{(\tau\partial_\tau)^s(\nabla \mathcal{P}^1_-(\lambda))
\mid \lambda  \in \Sigma_{\varepsilon,\lambda_0}\})
\leq& c_0,
\end{align*}
with some positive constant $c_0$.
Here, $\widetilde{\mathbf{F}}_-
=(\lambda^{1/2}f_3, \nabla f_3, \lambda\mathbf{f}_4,
\mathbf{f}_5, \lambda^{1/2} g_N, \nabla g_N)$.				
\end{lemm}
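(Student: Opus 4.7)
The plan is to follow the standard Shibata-style reduction attributed to \cite{Shibata2014a}: apply a partial Fourier transform in the tangential variables $x' \in \mathbb{R}^{N-1}$ to reduce (\ref{203}) to a family of ODEs in $x_N$ parameterized by the dual variable $\xi'$ and the resolvent parameter $\lambda$. First I would use the decomposition $f_3 = \dv\mathbf{f}_4$ to decouple the pressure: taking the partial Fourier transform of the momentum equation and combining it with the divergence constraint, one extracts an elliptic equation for $\hat{\pi}_-$ that can be solved in the half space, yielding a representation of $\pi_-$ in $\hat{W}^1_q(\mathbb{R}^N_-)$. The momentum equation then becomes a second-order vector ODE for $\hat{\mathbf{u}}_-$ with characteristic roots $A = |\xi'|$ and $B = (\rho_{*-}\lambda/\mu_{*-} + |\xi'|^2)^{1/2}$; retaining only the modes decaying as $x_N \to -\infty$ leaves finitely many $\xi'$-dependent constants of integration.

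These constants are pinned down by the boundary conditions $\mu_{*-}D_{mN}(\mathbf{u}_-)|_- = 0$ for $m = 1, \ldots, N-1$ and $\{\mu_{*-}D_{NN}(\mathbf{u}_-)-\pi_-\}|_- = g_N$. The resulting linear algebraic system at each $\xi'$ is governed by the Lopatinski determinant $L(\xi',\lambda)$ for the Stokes free-boundary problem, which is known to be nonzero on $\Sigma_\varepsilon$ and to behave like a positive power of $A+B$. Inverting this system gives explicit multiplier representations of $\hat{\mathbf{u}}_-$ and $\hat{\pi}_-$ in terms of the transformed data $\widetilde{\mathbf{F}}_-$, and the inverse partial Fourier transform then defines the solution operator families $\mathcal{A}^1_-(\lambda)$ and $\mathcal{P}^1_-(\lambda)$.

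To derive the stated $\mathcal{R}$-bounds I would combine a Volevich-type extension identity of the form
\begin{align*}
u(x) = -\int_{-\infty}^{0} \partial_{y_N}\bigl\{\mathcal{F}^{-1}_{\xi'}[m(\xi',\lambda,x_N+y_N)\hat{F}(\xi',y_N)](x')\bigr\}\,dy_N
\end{align*}
with the $\mathcal{R}$-bounded version of the Mikhlin-type Fourier multiplier theorem used throughout the Shibata framework. The main obstacle is the symbol analysis: one must verify that each piece of the solution formula, together with its $(\tau\partial_\tau)^s$ and $\xi'$-derivatives of the required order, satisfies uniform pointwise bounds on $\Sigma_{\varepsilon,\lambda_0} \times (\mathbb{R}^{N-1}\setminus\{0\})$, most delicately for the reciprocal $1/L(\xi',\lambda)$ and for its products with the exponentials $e^{Ax_N}$ and $e^{Bx_N}$. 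This requires the customary case analysis separating the regimes $|\xi'|^2 \lesssim |\lambda|$ and $|\xi'|^2 \gtrsim |\lambda|$, and accounts for the lower bound $\lambda_0$ implicit in the statement. As all of these estimates are carried out in detail in \cite{Shibata2014a}, I would refer the reader there once the reduction and symbol structure have been set up.
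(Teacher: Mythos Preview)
The paper does not prove this lemma at all: it is stated as a quotation of a known result from \cite{Shibata2014a}, with no argument supplied. Your outline is a faithful sketch of the method in that reference (partial Fourier transform, ODE solution with roots $A$ and $B_-$, Lopatinski determinant, Volevich identity, multiplier/$\mathcal{R}$-boundedness estimates), so there is nothing in the present paper to compare against and your closing remark deferring to \cite{Shibata2014a} is exactly what the paper itself does.
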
	
	
Thus, it is sufficient to consider the problem (\ref{201}) with 	
$f_1=0$, $\mathbf{f}_2=0$, $f_3=0$, $\mathbf{f}_5=0$,
and $g_k=0$ ($k=1,\dots,N+1$). Finally, we consider one more
auxiliary problem:	
\begin{align}\label{204}
\lambda\rho_++\rho_{*+}{\rm div}\,\mathbf{u}_+&=0 
&\text{ in $\mathbb{R}^N_+$},\\
\rho_{*+}\lambda\mathbf{u_+}
-\mu_{*+}\Delta\mathbf{u_+}
-\nu_{*+}\nabla{\rm div}\,\mathbf{u_+}
-\rho_{*+}\kappa_{*+}\Delta\nabla\rho_+&=0 
&\text{ in $\mathbb{R}^N_+$}, \nonumber\\
{\rm div}\,\mathbf{u}_-=0,\quad 
\rho_{*-}\lambda\mathbf{u}_--\mu_{*-}\Delta\mathbf{u}_-
+\nabla \pi_-&=0 
&\text{ in $\mathbb{R}^N_-$}, \nonumber\\
\mu_{*-}D_{mN}(\mathbf{D(u_-)})|_-
-\mu_{*+}D_{mN}(\mathbf{u}_+)|+&=0,	\nonumber\\
\{\mu_{*-}D_{NN}(\mathbf{u}_-)-\pi_- \}|_-&
=0, \nonumber\\
\{\mu_{*+}D_{NN}(\mathbf{u}_+)
+(\nu_{*+}-\mu_{*+}){\rm div}\,\mathbf{u}_+
+\rho_{*+}\kappa_{*+}\Delta\rho_+ \}|_+&
=0, \nonumber\\
u_{m-}|_--u_{m+}|_+=h_m,\ \partial_N\rho_+|_+&=0. \nonumber
\end{align}	
From Sect. \ref{Sect.solution} to Sect. \ref{Sect.Lopatinski},
we prove the following theorem.	
\begin{theo}\label{Th05}
Let $1<q<\infty$ and $\varepsilon_*<\varepsilon<\pi/2$. 
Assume that $\rho_{*+}\neq\rho_{*-}$, $\eta_*\neq 0$,
and $\kappa_{*+}\neq \mu_{*+}\nu_{*+}$.
Set
\begin{align*}
Z_q=&
\{\mathbf{h}=(h_1,\dots,h_{N-1})\mid 
\mathbf{h}\in W^2_q(\mathbb{R}^N)^{N-1}\},\\
\mathcal{Z}_q=&
\{(F_9, F_{10}, F_{11}) \mid
F_9 \in L_q(\mathbb{R}^N)^{N-1},\enskip 
F_{10} \in L_q(\mathbb{R}^N)^{(N-1)N},\enskip
F_{11} \in L_q(\mathbb{R}^N)^{(N-1)N^2}\}.
\end{align*}
Then, there exist a positive constant $\lambda_0$ 
and operator families
$\mathcal{A}_\pm(\lambda)$, $\mathcal{B}_+(\lambda)$,
and $\mathcal{P}_-(\lambda)$ with
\begin{align*}
\mathcal{A}^2_{\pm}(\lambda)\in&
{\rm Anal}\,(\Sigma_{\varepsilon,\lambda_0},
\mathcal{L}(\mathcal{Z}_q,W^2_q(\mathbb{R}^N_\pm)^N),\\
\mathcal{B}^2_{+}(\lambda)\in&
{\rm Anal}\,(\Sigma_{\varepsilon,\lambda_0},
\mathcal{L}(\mathcal{Z}_q,W^3_q(\mathbb{R}^N_+)),\\
\mathcal{P}^2_{-}(\lambda)\in &
{\rm Anal}\,(\Sigma_{\varepsilon,\lambda_0},\mathcal{L}
(\mathcal{Z}_q,\hat{W}^1_q(\mathbb{R}^N_-))),
\end{align*} 	
such that for any $\lambda\in \Sigma_{\varepsilon,\lambda_0}$ 
and 
$\mathbf{h}\in Z_q$,
$\mathbf{u}_\pm=\mathcal{A}^2_\pm(\lambda) 
\widetilde{\mathbf{H}}$,
$\rho_+=\mathcal{B}^2_+(\lambda) \widetilde{\mathbf{H}}$, and
$\pi_-=\mathcal{P}^2_-(\lambda) \widetilde{\mathbf{H}}$
are solutions of problem (\ref{204}).
Furthermore, for $s=0,1$, we have
\begin{align}\label{operator.estimate1}
\mathcal{R}_{\mathcal{L}
(\mathcal{Z}_{q},L_q(\mathbb{R}^N)^{N^3+N^2+N})}
(\{(\tau\partial_\tau)^s(G_\lambda^1 \mathcal{A}^2_\pm(\lambda))
\mid \lambda \in \Sigma_{\varepsilon,\lambda_0}\})
\leq& c_0,\\
\mathcal{R}_{\mathcal{L}
(\mathcal{Z}_{q},L_q(\mathbb{R}^N)^{N^3+N^2}
\times W^1_q(\mathbb{R}^N))}
(\{(\tau\partial_\tau)^s(G_\lambda^2 \mathcal{B}^2_+(\lambda))
\mid \lambda \in \Sigma_{\varepsilon,\lambda_0}\})
\leq& c_0,
\nonumber\\
\mathcal{R}_{\mathcal{L}(\mathcal{Z}_{q},L_q(\mathbb{R}^N)^{N})}
(\{(\tau\partial_\tau)^s(\nabla \mathcal{P}^2_-(\lambda))
\mid \lambda  \in \Sigma_{\varepsilon,\lambda_0}\})
\leq& c_0,\nonumber
\end{align}
with some positive constant $c_0$. 
Here, $\widetilde{\mathbf{H}}
=(\lambda \mathbf{h},\lambda^{1/2}\nabla\mathbf{h},
\nabla^2\mathbf{h})$.		
\end{theo}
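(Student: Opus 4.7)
\medskip
\noindent\textbf{Proof proposal for Theorem \ref{Th05}.}
The plan is to reduce problem (\ref{204}) to an ODE system in $x_N$ via partial Fourier transform in the tangential variables $x'=(x_1,\dots,x_{N-1})$, solve that system explicitly, translate the result back through the inverse Fourier transform into an operator formula, and finally verify $\mathcal{R}$-boundedness of the resulting multipliers.

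First, I would apply $\mathcal{F}_{x'}$ to (\ref{204}), writing $\xi'=(\xi_1,\dots,\xi_{N-1})$ and $A=|\xi'|$. In $\mathbb{R}^N_-$ the Stokes part, combined with $\dv\mathbf{u}_-=0$, produces the two decaying characteristic roots $A$ and $B_-=\sqrt{A^2+\rho_{*-}\lambda/\mu_{*-}}$, so $\hat{\mathbf{u}}_-$ and $\hat{\pi}_-$ are superpositions of $e^{Ax_N}$ and $e^{B_-x_N}$ with vector/scalar coefficients. In $\mathbb{R}^N_+$ the Navier-Stokes-Korteweg system couples $\hat{\rho}_+$ and $\hat{\mathbf{u}}_+$; eliminating $\hat{\rho}_+$ from the first equation and substituting into the momentum equation gives a higher-order scalar equation for $\dv\hat{\mathbf{u}}_+$ whose characteristic polynomial (after using the first equation) is essentially
\[
\kappa_{*+}s^2 - (\mu_{*+}+\nu_{*+})s + 1 = 0
\]
modulo the $A^2$ and $\lambda$ contributions; this quadratic has discriminant proportional to $\eta_*$, so the hypothesis $\eta_*\neq 0$ yields two distinct roots $\alpha_\pm=\alpha_\pm(\lambda,A)$, both with positive real part under the assumption $\kappa_{*+}\neq\mu_{*+}\nu_{*+}$. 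Together with the Stokes-type root $B_+=\sqrt{A^2+\rho_{*+}\lambda/\mu_{*+}}$ arising from the solenoidal component of $\hat{\mathbf{u}}_+$, this gives three decaying exponentials in $\mathbb{R}^N_+$, which together with two in $\mathbb{R}^N_-$ provide exactly the $2N+1$ free coefficients matched to the $2N+1$ interface conditions in (\ref{204}).

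Next, I would substitute the general solution into the interface conditions, obtaining a linear system $M(\lambda,\xi')\mathbf{c}=\hat{\mathbf{h}}$ for the coefficient vector $\mathbf{c}$, where the right-hand side carries the tangential-velocity data $h_m$. The key step is to analyze the Lopatinski determinant $L(\lambda,\xi')=\det M(\lambda,\xi')$ in Sect.~\ref{Sect.Lopatinski}: I would expand $L$ in terms of $A$, $B_\pm$, $\alpha_\pm$, and the physical constants, and then prove a lower bound of the form $|L(\lambda,\xi')|\geq c(|\lambda|^{1/2}+A)^m$ uniformly on $\Sigma_{\varepsilon,\lambda_0}$ for $\lambda_0$ large. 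The non-degeneracy conditions $\rho_{*+}\neq\rho_{*-}$, $\eta_*\neq 0$, and $\kappa_{*+}\neq\mu_{*+}\nu_{*+}$ are precisely what eliminate the cancellations that would otherwise make $L$ vanish at the dominant order, the first one being associated with the jump $[[\rho]]$ appearing in the kinetic condition contribution to $\lambda H$, and the latter two with the distinctness of $\alpha_\pm$ and $B_+$. This Lopatinski analysis is the main obstacle: it mixes the compressible and incompressible blocks in a highly nonsymmetric way, and one must keep track of multipliers of several different orders of homogeneity in $(\lambda^{1/2},A)$ simultaneously.

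Once the invertibility of $M$ and the pointwise bounds on $L^{-1}$ are in hand, the explicit solution formulas for $\hat{\mathbf{u}}_\pm$, $\hat{\rho}_+$, and $\hat{\pi}_-$ can be written as sums of terms of the form
\[
\int_0^\infty m(\lambda,\xi',x_N,y_N)\,\widehat{\mathbf{H}}(\xi',y_N)\,dy_N
\]
with kernels built from $A$, $B_\pm$, $\alpha_\pm$, $L^{-1}$, and the decaying exponentials. I would then invoke the multiplier lemmas developed in Sect.~\ref{Sect.multipliers} — which extend the Volevich-type trick used in Shibata \cite{Shibata2014a} and Saito \cite{Saito} — to bound $(\tau\partial_\tau)^s$ applied to these operators as $\mathcal{R}$-bounded operator families from $\mathcal{Z}_q$ into the target spaces. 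The factors $\lambda\mathbf{h}$, $\lambda^{1/2}\nabla\mathbf{h}$, and $\nabla^2\mathbf{h}$ inside $\widetilde{\mathbf{H}}$ are designed exactly so that each multiplier entering the formulas for $G_\lambda^1\mathcal{A}_\pm^2$, $G_\lambda^2\mathcal{B}_+^2$, and $\nabla\mathcal{P}_-^2$ is homogeneous of order $0$ in $(\lambda^{1/2},A)$ after combining with the lower bound on $|L|$. Combining this with the standard $\mathcal{R}$-bounded Fourier multiplier theorem yields (\ref{operator.estimate1}), completing the proof.
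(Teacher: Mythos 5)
Your proposal matches the paper's proof of Theorem~\ref{Th05}: partial Fourier transform in $x'$, explicit ODE solutions built from the characteristic roots $A$, $B_-$ in $\BR^N_-$ and $B_+$, $t_1$, $t_2$ (your $\alpha_\pm$ and the Stokes-type root) in $\BR^N_+$, the Lopatinski-determinant lower bound $|\mathfrak{l}(\lambda,\xi')|\geq C(|\lambda|^{1/2}+A)^6$ from Lemma~\ref{lem601}, and the Volevich-type multiplier lemmas of Sections~\ref{Sect.multipliers}--\ref{Sect.Lopatinski} to conclude $\mathcal{R}$-boundedness. One minor misattribution worth fixing: $\kappa_{*+}\neq\mu_{*+}\nu_{*+}$ is imposed so that $B_+\neq t_1, t_2$ (cf.\ Remark~\ref{eta.condition}), whereas positivity of the real parts of all three roots follows automatically from $\lambda\in\Sigma_\varepsilon$ and is independent of this hypothesis.
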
	




\section{Solution formulas without surface tension} 
\label{Sect.solution}
In this section, we consider the following equations:
\begin{align} \label{401}
\lambda\rho_++\rho_{*+}{\rm div}\,\mathbf{u}_+&=0 
&\text{ in $\mathbb{R}^N_+$},\\
\rho_{*+}\lambda\mathbf{u_+}-\mu_{*+}\Delta\mathbf{u_+}
-\nu_{*+}\nabla{\rm div}\,\mathbf{u_+}
-\rho_{*+}\kappa_{*+}\Delta\nabla\rho_+&=0
&\text{ in $\mathbb{R}^N_+$}, \nonumber\\
{\rm div}\,\mathbf{u}_-=0,\quad 
\rho_{*-}\lambda\mathbf{u}_--\mu_{*-}\Delta\mathbf{u}_-
+\nabla \pi_-&=0 
&\text{ in $\mathbb{R}^N_-$}, \nonumber\\
\mu_{*-}D_{mN}(\mathbf{D(u_-)})|_-
-\mu_{*+}D_{mN}(\mathbf{u}_+)|+&=0,	\nonumber\\
\{\mu_{*-}D_{NN}(\mathbf{u}_-)-\pi_- \}|_-&
=\sigma_-\Delta'H, \nonumber\\
\{\mu_{*+}D_{NN}(\mathbf{u}_+)+(\nu_+{*+}-\mu_{*+})
{\rm div}\,\mathbf{u}_+
+\rho_{*+}\kappa_{*+}\Delta\rho_+ \}|_+&
=\sigma_+\Delta'H, \nonumber\\
u_{m-}|_--u_{m+}|_+=h_m,\quad \partial_N\rho_+|_+&=0, \nonumber
\end{align}	
where we have added $\sigma_\pm\Delta' H$ with 
$\sigma_\pm=\rho_{*\pm}\sigma/(\rho_{*-}-\rho_{*+})$
to (\ref{204}) for the latter use.
Let $\widehat{v}=\mathcal{F}_{x'}[v](\xi',x_N)$ denote
the partial Fourier transform with respect to the tangential
variable $x'=(x_1,\dots,x_{N-1})$ with 
$\xi'=(\xi_1,\dots,\xi_{N-1})$ defined by
$\mathcal{F}_{x'}[v](\xi',x_N)
=\int_{\mathbb{R}^{N-1}}e^{-i\xi'\cdot\xi'}v(x',x_N)dx'$.
Applying the partial Fourier transforms to (\ref{401}) yields 
ordinary differential equations with respect to $x_N\neq 0$:	
\begin{align}
\label{402}	
\lambda\widehat{\rho}_++\rho_{*+}
\widehat{{\rm div}\,\mathbf{u}_+}&=0
&\text{ for $x_N>0$},\\
\label{403}	
\rho_{*+}\lambda\widehat{u}_{j+}-\mu_{*+}
(\partial_N^2-|\xi'|^2)\widehat{u}_{j+}
-\nu_{*+}i\xi_j\widehat{{\rm div}\,\mathbf{u_+}}&\\
\qquad-i\xi_j\{\rho_{*+}\kappa_{*+}(\partial_N^2-|\xi'|^2)\}
\widehat{\rho}_+&=0&\text{ for $x_N>0$},\nonumber\\
\label{404}	
\rho_{*+}\lambda\widehat{u}_{N+}
-\mu_{*+}(\partial_N^2-|\xi'|^2)\widehat{u}_{N+}
-\nu_{*+}\partial_N\widehat{{\rm div}\,\mathbf{u_+}}&\\
-\partial_N\{\rho_{*+}\kappa_{*+}(\partial_N^2-|\xi'|^2)\}
\widehat{\rho}_+&=0
&\text{ for $x_N>0$},\nonumber\\
\label{405}	
\widehat{{\rm div}\,\mathbf{u}_-}&=0&\text{ for $x_N<0$},\\
\label{406}	
\rho_{*-}\lambda\widehat{u}_{j-}-\mu_{*-}(\partial_N^2-|\xi'|^2)
\widehat{u}_{j-}-i\xi_j\widehat{\pi}_-&=0
&\text{ for $x_N<0$},\\
\label{407}	
\rho_{*-}\lambda\widehat{u}_{N-}
-\mu_{*-}(\partial_N^2-|\xi'|^2)\widehat{u}_{N-}
-\partial_N\widehat{\pi}_-&=0&\text{ for $x_N<0$},
\end{align}	
subject to the interface condition:
\begin{align}
\label{408}	
\mu_{*-}(\partial_N\widehat{u}_{m-}+i\xi_m\widehat{u}_{N-})|_--
\mu_{*+}(\partial_N\widehat{u}_{m+}+i\xi_m\widehat{u}_{N+})|_+
&=0,\\
\label{409}	
\{2\mu_{*-}\partial_N\widehat{u}_{N-}-\widehat{\pi}_- \}|_-
&=\sigma_-|\xi'|^2\widehat{H}(0),\\
\label{410}	
\{2\mu_{*+}\partial_N\widehat{u}_{N+}
+(\nu_{*+}-\mu_{*+})\widehat{{\rm div}\,\mathbf{u_+}}
-\rho_{*+}\kappa_{*+}(\partial_N^2-|\xi'|^2)
\widehat{\rho}_+ \}|_+
&=\sigma_+|\xi'|^2\widehat{H}(0),\\
\label{411}	
\widehat{u}_{m-}|_--\widehat{u}_{m+}|_+&=\widehat{h}_m(0),\\
\label{412}	
\partial_N\widehat{\rho}_+&=0.
\end{align}	
Here and in the sequel, $j$ runs from $1$ to $N-1$.
According to Saito \cite{Saito}, 
from (\ref{402}), (\ref{403}), and (\ref{404}), we obtain
\begin{align}\label{413}
(\partial_N^2-B_+^2)P_\lambda(\partial_N)\widehat{u}_{J+}
=0\qquad(J=1,\dots,N-1)
\end{align}	
with
\begin{align*}
B_+&=\sqrt{|\xi'|^2+\rho_{*+}\mu_{*+}^{-1}\lambda}\qquad
({\rm Re}\,B_+>0),\\
P_\lambda(t)&
=\rho_{*+}\lambda^2-\lambda(\mu_{*+}+\nu_{*+})(t^2-|\xi'|^2)
+(t^2-|\xi'|^2)\{\rho_{*+}\kappa_{*+}(t^2-|\xi'|^2) \}.
\nonumber
\end{align*}
The roots of $P_\lambda(t)=0$ are 
$t=\pm\sqrt{|\xi'|^2+s_i\lambda}\ (i=1,2)$
and $s_i\ (i=1,2)$ are the root of the following equation:
\begin{align}\label{eq:P}
z^2-\left(\frac{\mu_{*+}+\nu_{*+}}{\kappa_{*+}}
\right)z 
+\frac{1}{\kappa_{*+}}=0.
\end{align}	
Here, $t_i$ are defined by $t_i=\sqrt{|\xi'|^2+s_i\lambda}$, 
whose detail will be discussed in Sect. \ref{Sect.multipliers}.
As seen in Sect. \ref{Sect.multipliers},
we have three roots $B_+$, $t_1$, and $t_2$ with
positive real parts different from each other.	
	
On the other hand, according to Shibata \cite{Shibata2016}, 
from (\ref{405}), (\ref{406}), and (\ref{407}), we obtain
\begin{align}
\label{415}
(\partial_N^2-A^2)(\partial^2_N-B_-^2)\widehat{u}_{J-}&=0,\\
\label{416}
(\partial_N^2-A^2)\widehat{\pi}_-=0
\end{align}
with
\begin{align*}
A=|\xi'|,\ B_-=\sqrt{|\xi'|^2+\rho_{*-}\mu_{*-}^{-1}\lambda}
\qquad ({\rm Re}\,B_->0).
\end{align*}
In view of (\ref{413}), (\ref{415}), and (\ref{416}), 
we look for solutions $\widehat{u}_{J\pm}$ and 
$\widehat{\pi}_-$ of the forms:
\begin{align}
\label{417}
\widehat{u}_{J+}(x_N)&=\alpha_{J+}e^{-B_+x_N}
+\beta_{J+}(e^{-t_1x_N}-e^{-B_+x_N})\\
&\qquad+\gamma_{J+}(e^{-t_2x_N}-e^{-B_+x_N}),\nonumber\\
\label{418}
\widehat{u}_{J-}(x_N)&=\alpha_{J-}e^{B_-x_N}
+\beta_{J-}(e^{B_-x_N}-e^{Ax_N}),\\
\label{419}
\widehat{\pi}_-(x_N)&=\gamma_-e^{Ax_N}.
\end{align}	
Using $A$ and $B_\pm$, we rewrite 
(\ref{403}), (\ref{404}), (\ref{406}), and (\ref{407}) 
as follows:
\begin{align}
\label{420}
\mu_{*+}\lambda(\partial_N^2-B_+^2)\widehat{u}_{j+}
+i\xi_j\{\nu_{*+}\lambda
-\rho_{*+}\kappa_{*+}(\partial^2_N-A^2) \}
\widehat{{\rm div}\,\mathbf{u_+}}&=0,\\
\label{421}
\mu_{*+}\lambda(\partial_N^2-B_+^2)\widehat{u}_{N+}
+\partial_N\{\nu_{*+}\lambda
-\rho_{*+}\kappa_{*+}(\partial^2_N-A^2) \}
\widehat{{\rm div}\,\mathbf{u_+}}&=0,\\
\label{422}
\mu_{*-}(\partial^2_N-B^2_-)
\widehat{u}_{j-}-i\xi_j\widehat{\pi}_-&=0,\\
\label{423}
\mu_{*-}(\partial_N^2-B^2_-)
\widehat{u}_{N-}-\partial_N\widehat{\pi}_-&=0.
\end{align}	
	
To state our solution formulas of equations: 
(\ref{402}) - (\ref{412}), we introduce 
some classes of multipliers.
\begin{defi}
Let $0<\varepsilon<\pi/2,\ \lambda_0\geq 0$, 
and let $s$ be a real number.
Set
\begin{align*}
\widetilde{\Sigma}_{\varepsilon,\lambda_0}
=\{(\lambda,\xi') \mid 
\lambda=\gamma+i\tau\in
\Sigma_{\varepsilon,\lambda_0},
\xi'=(\xi_1,\dots,\xi_{N-1})\in \mathbb{R}^{N-1}
\backslash\{0 \}\}.		
\end{align*}
Let $m(\lambda,\xi')$ be a function defined on 
$\widetilde{\Sigma}_{\varepsilon,\lambda_0} $ 
which is infinitely times differentiable with respect to 
$\tau$ and $\xi'$ when 
$(\lambda,\xi')\in \widetilde{\Sigma}_{\varepsilon,\lambda_0} $.
	
(1) If there exists a real number $s$ such that for 
any multi-index 
$\alpha'=(\alpha_1,\dots,\allowbreak\alpha_{N-1})
\in\mathbb{N}_0^{N-1}$
and $(\lambda,\xi')\in 
\widetilde{\Sigma}_{\varepsilon,\lambda_0}$ 
there hold the estimates:
\begin{align}
|\partial_{\xi'}^{\alpha'}m(\lambda,\xi')|
\leq C_{\alpha'}(|\lambda|^{1/2}+A)^{s-|\kappa'|},\\
\Big|\partial_{\xi'}^{\alpha'}
\Big(\tau \partial_\tau m(\lambda,\xi') \Big)\Big| 
\leq C_{\alpha'}(|\lambda|^{1/2}+A)^{s-|\kappa'|} \nonumber
\end{align}
for some constant $C_{\alpha'}$ depending on 
$s$, $\alpha'$, $\varepsilon$, $\mu_{*\pm}$, $\nu_{*+}$,
$\kappa_{*+}$, and $\rho_{*\pm}$.
Then, $m(\lambda,\xi')$ is called a multiplier of 
order $s$ with type 1.	
	
(2) If there exists a real number $s$ such that for 
any multi-index 
$\alpha'=(\alpha_1,\dots,\allowbreak\alpha_{N-1})
\in\mathbb{N}_0^{N-1}$
and $(\lambda,\xi')\in 
\widetilde{\Sigma}_{\varepsilon,\lambda_0}$ 
there hold the estimates:
\begin{align}
|\partial_{\xi'}^{\alpha'}m(\lambda,\xi')|
\leq C_{\alpha'}(|\lambda|^{1/2}+A)^s A^{-|\kappa'|},\\
\Big|\partial_{\xi'}^{\alpha'}
\Big(\tau\partial_\tau m(\lambda,\xi') \Big)\Big| 
\leq C_{\alpha'}(|\lambda|^{1/2}+A)^s A^{-|\kappa'|} 
\nonumber
\end{align}
for some constant $C_{\alpha'}$ depending on 
$s$, $\alpha'$, $\varepsilon$, $\mu_{*\pm}$, $\nu_{*+}$,
$\kappa_{*+}$, and $\rho_{*\pm}$.
Then, $m(\lambda,\xi')$ is called a multiplier of 
order $s$ with type 2.		
	
In what follows, we denote the set of multipliers defined 
on $\widetilde{\Sigma}_{\varepsilon,\lambda_0} $ of 
order $s$ with type $l\ (l=1,2)$ by 
$\mathbb{M}_{s,l,\varepsilon,\lambda_0}$.	
\end{defi}	
	
Obviously, $\mathbb{M}_{s,l,\varepsilon,\lambda_0}$ 
are the vector spaces on $\mathbb{C}$.
Furthermore, by the fact 
$|\lambda^{1/2}+A|^{-|\alpha'|}\leq A^{-|\alpha'|}$ 
and the Leibniz rule, we have the following lemma immediately.

\begin{lemm}\label{lem:multiplier}
Let $s_1,s_2\in \mathbb{R}$, $0<\varepsilon<\pi/2$,
$\lambda\in\Sigma_{\varepsilon,\lambda_0}$.	

(1) Given 
$m_i\in \mathbb{M}_{s_i,1,\varepsilon,\lambda_0}\ (i=1,2)$, 
we have 
$m_1m_2\in\mathbb{M}_{s_1+s_2,1,\varepsilon,\lambda_0}$.

(2) Given 
$l_i\in \mathbb{M}_{s_i,i,\varepsilon,\lambda_0}\ (i=1,2)$, 
we have 
$l_1l_2\in\mathbb{M}_{s_1+s_2,2,\varepsilon,\lambda_0}$.

(3) Given 
$n_i\in \mathbb{M}_{s_i,2,\varepsilon,\lambda_0}\ (i=1,2)$, 
we have 
$n_1n_2\in\mathbb{M}_{s_1+s_2,2,\varepsilon,\lambda_0}$.	
\end{lemm}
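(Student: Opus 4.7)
The plan is to obtain each of the three assertions by a direct application of the Leibniz rule to the product, combined with the elementary inequality
\[
\big(|\lambda|^{1/2}+A\big)^{-1} \leq A^{-1}
\qquad \text{for } (\lambda,\xi') \in \widetilde{\Sigma}_{\varepsilon,\lambda_0},
\]
which is the only non-bookkeeping ingredient and follows from $A \leq |\lambda|^{1/2}+A$. For any multi-index $\alpha' \in \mathbb{N}_0^{N-1}$ I would write
\[
\partial_{\xi'}^{\alpha'}(fg) \;=\; \sum_{\beta' \leq \alpha'} \binom{\alpha'}{\beta'}\,
\partial_{\xi'}^{\beta'} f \cdot \partial_{\xi'}^{\alpha'-\beta'} g,
\]
and estimate each summand using the defining bound of whichever type $f$ and $g$ belong to.

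For assertion (1) a typical summand is controlled by a constant times $(|\lambda|^{1/2}+A)^{s_1-|\beta'|}(|\lambda|^{1/2}+A)^{s_2-|\alpha'-\beta'|} = (|\lambda|^{1/2}+A)^{s_1+s_2-|\alpha'|}$, which is precisely the type-1 bound of order $s_1+s_2$. For assertion (3) the corresponding estimate gives $(|\lambda|^{1/2}+A)^{s_1}A^{-|\beta'|}(|\lambda|^{1/2}+A)^{s_2}A^{-|\alpha'-\beta'|} = (|\lambda|^{1/2}+A)^{s_1+s_2}A^{-|\alpha'|}$, the required type-2 bound. For the mixed case (2), a typical summand is bounded by $(|\lambda|^{1/2}+A)^{s_1-|\beta'|}(|\lambda|^{1/2}+A)^{s_2}A^{-|\alpha'-\beta'|}$, and here the key step is to absorb the surplus factor $(|\lambda|^{1/2}+A)^{-|\beta'|}$ into $A^{-|\beta'|}$ via the displayed inequality, yielding $(|\lambda|^{1/2}+A)^{s_1+s_2}A^{-|\alpha'|}$, which is the desired type-2 estimate of order $s_1+s_2$.

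For the $\tau\partial_\tau$ part I would use the product rule $\tau\partial_\tau(fg) = (\tau\partial_\tau f)\,g + f\,(\tau\partial_\tau g)$, then apply $\partial_{\xi'}^{\alpha'}$ and repeat the same Leibniz computation, invoking the $\tau\partial_\tau$ bound on one factor and the ordinary bound on the other; since both bounds in the definitions have exactly the same shape, no additional argument is needed. There is no serious obstacle here\textemdash the only subtlety is the direction of the inequality $(|\lambda|^{1/2}+A)^{-1}\leq A^{-1}$ used in case (2) to trade an excess power of $(|\lambda|^{1/2}+A)$ for the required power of $A$.
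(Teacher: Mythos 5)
Your proposal is correct and is exactly the argument the paper has in mind: the paper states only that the lemma follows ``by the fact $|\lambda^{1/2}+A|^{-|\alpha'|}\leq A^{-|\alpha'|}$ and the Leibniz rule,'' and your write-up simply fills in those bookkeeping details, including the correct treatment of $\tau\partial_\tau$ via the product rule.
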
	
	
\begin{rema}\label{rema.multi}
We easily see that 
$i\xi_j\in\mathbb{M}_{1,2,\varepsilon,0}$ ($j=1,\dots,N-1$),
and $A\in\mathbb{M}_{1,2,\varepsilon,0}$.
Especially, 
$i\xi_j/A\in\mathbb{M}_{0,2,\varepsilon,0}\ (j=1,\dots,N-1)$.
In addition, 
$\mathbb{M}_{s,1,\varepsilon,\lambda}
\subset \mathbb{M}_{s,2,\varepsilon,\lambda}$ 
for any $s\in\mathbb{R}$.	
\end{rema}	
	
Then, we arrive at the following  solution formulas for 
equations (\ref{402}) - (\ref{412}):
\begin{align}\label{4.428}
\widehat{u}_{J+}=&\sum_{i=1}^{4}\widehat{u}_{Ji}^+,\qquad 
\widehat{u}_{J-}=\sum_{i=1}^{3}\widehat{u}_{Ji}^-,
\\
\widehat{\rho}_+=&A M_{0+}(x_N)
\Big\{\sum_{m=1}^{N-1} P_{m,1}^+ 
\widehat{h}_m(0)+AP_{N,1}^+ \widehat{H}(0) \Big\}\nonumber\\
&+Ae^{-t_1 x_N}\Big\{\sum_{m=1}^{N-1} P_{m,2}^+ 
\widehat{h}_m(0)+AP_{N,2}^+ \widehat{H}(0) \Big\},\nonumber\\
\widehat{\pi}_-=&e^{Ax_N}\Big\{\sum_{m=1}^{N-1} P_m^- 
\widehat{h}_m(0)+AP_N^-\widehat{H}(0) \Big\},\nonumber\\
\widehat{u}_{J1}^+=&AM_{1+}(x_N)
\Big\{\sum_{m=1}^{N-1} Q_{Jm,1}^+ \widehat{h}_m(0)
+AQ_{JN,1}^+\widehat{H}(0) \Big\},
\nonumber\\
\widehat{u}_{J2}^+=&AM_{2+}(x_N)
\Big\{\sum_{m=1}^{N-1} Q_{Jm,2}^+ \widehat{h}_m(0) +
+AQ_{JN,2}^+\widehat{H}(0) \Big\},
\nonumber\\
\widehat{u}_{J3}^+=&Ae^{-B_+x_N}
\Big\{\sum_{m=1}^{N-1} R_{Jm}^+ \widehat{h}_m(0) 
+AR_{JN}^+\widehat{H}(0) \Big\},
\nonumber\\	
\widehat{u}_{j4}^+=&e^{-B_+x_N}
S^+_{j}\widehat{h}_j,\nonumber\\
\widehat{u}_{N4}^+=&0,\nonumber\\
\widehat{u}_{J1}^-=&AM_-(x_N)
\Big\{\sum_{m=1}^{N-1} Q_{Jm}^- \widehat{h}_m(0)  
+AQ_{JN}^-\widehat{H}(0) \Big\},
\nonumber\\	
\widehat{u}_{J2}^-=&Ae^{B_- x_N}
\Big\{\sum_{m=1}^{N-1} R_{Jm}^- \widehat{h}_m(0)
+AR_{JN}^-\widehat{H}(0) \Big\},
\nonumber\\	
\widehat{u}_{j3}^-=&e^{B_-x_N}
S^-_{j}\widehat{h}_j,\nonumber\\
\widehat{u}_{N3}^-=&0,\nonumber
\end{align}	
with
\begin{alignat}3\label{4.429}
P_{m,1}^+\in&\enskip \mathbb{M}_{-1,2,\varepsilon,0},
&\quad
P_{N,1}^+\in&\enskip\mathbb{M}_{-1,2,\varepsilon,0},
&\quad
P_{m,2}^+\in&\enskip \mathbb{M}_{-1,2,\varepsilon,0},
\\
P_{N,2}^+\in&\enskip\mathbb{M}_{-1,2,\varepsilon,0},
&\quad
P_{m}^-\in&\enskip \mathbb{M}_{1,2,\varepsilon,0},
&\quad
P_{N}^-\in&\enskip\mathbb{M}_{1,2,\varepsilon,0},
\nonumber\\
Q_{Jm,1}^+\in&\enskip \mathbb{M}_{0,2,\varepsilon,0},
&\quad
Q_{JN,1}^+\in&\enskip\mathbb{M}_{0,2,\varepsilon,0},
&\quad
Q_{Jm,2}^+\in&\enskip
\mathbb{M}_{0,2,\varepsilon,0},
\nonumber\\
Q_{JN,2}^+\in&\enskip\mathbb{M}_{0,2,\varepsilon,0},
&\quad
R_{Jm}^+\in&\enskip
\mathbb{M}_{-1,2,\varepsilon,0},
&\quad
R_{JN}^+\in&\enskip\mathbb{M}_{-1,2,\varepsilon,0},
\nonumber\\
S_{j}^+\in&\enskip\mathbb{M}_{0,1,\varepsilon,0},
&\quad
Q_{Jm}^-\in&\enskip\mathbb{M}_{0,2,\varepsilon,0},
&\quad
Q_{JN}^-\in&\enskip\mathbb{M}_{0,2,\varepsilon,0},
\nonumber\\
R_{Jm}^-\in&\enskip\mathbb{M}_{-1,2,\varepsilon,0},
&\quad
R_{JN}^-\in&\enskip\mathbb{M}_{-1,2,\varepsilon,0},
&\quad
S_{j}^-\in&\enskip\mathbb{M}_{0,1,\varepsilon,0}.
\nonumber
\end{alignat}	
Here and in the following, $J$ runs from $1$ through $N$.
Recall that $j$ and $m$ run from $1$ through $N-1$, respectively.
Furthermore, we define $M_{0+}(x_N)$, $M_{1+}(x_N)$,
$M_{2+}(x_N)$, and $M_{-}(x_N)$ as follows:	
\begin{align*}
M_{0+}(x_N)=&\frac{e^{-t_2x_N}-e^{-t_1x_N}}{t_2-t_1},\\
M_{i+}(x_N)=&\frac{e^{-t_ix_N}-e^{-B_+x_N}}{t_i-B_+}
\enskip(i=1,2),\\
M_-(x_N)=&\frac{e^{B_-x_N}-e^{Ax_N}}{B_--A}.
\end{align*}	
	
From now on, we prove (\ref{4.428}).
On the other hand, we prove (\ref{4.429}) 
in Sect.\ref{Sect.multipliers}.
By (\ref{419}), we obtain	
\begin{align*}
\widehat{{\rm div}\,\mathbf{u}_+}
=&(i\xi'\cdot\alpha'_+-i\xi'\cdot\beta'_+-i\xi'\cdot\gamma'_+
-B_+\alpha_{N+}+B_+\beta_{N+}+B_+\gamma_{N+})e^{-B_+x_N}
\nonumber\\
&+(i\xi'\cdot\beta'_+-t_1\beta_{N+})e^{-t_1x_N}
+(i\xi'\cdot\beta'_+-t_2\beta_{N+})e^{-t_2x_N}
\nonumber
\end{align*}
with
\begin{align*}
i\xi'\cdot v'=\sum_{j=1}^{N-1}i\xi_j v_j\quad
\text{ for $\mathbf{v}=(v_1,\dots,v_{N-1},v_N)$}.
\end{align*}
Then, from (\ref{420}) and (\ref{421}), we have
\begin{align}	
\label{424}
i\xi'\cdot\alpha'_+-i\xi'\cdot\beta'_+
-i\xi'\cdot\gamma'_+-B_+\alpha_{N+}+B_+\beta_{N+}
+B_+\gamma_{N+}&=0,\\
\mu_{*+}\lambda\beta_{j+}(t_1^2-B_+^2)
+i\xi_j (i\xi'\cdot\beta'_+-t_1\beta_{N+})
\{\nu_{*+}\lambda-\rho_{*+}\kappa_{*+}(t_1^2-A^2) \}&=0,
\nonumber\\
\mu_{*+}\lambda\gamma_{j+}(t_1^2-B_+^2)
+i\xi_j (i\xi'\cdot\gamma'_+-t_2\gamma_{N+})
\{\nu_{*+}\lambda-\rho_{*+}\kappa_{*+}(t_2^2-A^2) \}&=0,
\nonumber\\
\mu_{*+}\lambda\beta_{N+}(t_1^2-B_+^2)
-t_1 (i\xi'\cdot\beta'_+-t_1\beta_{N+})
\{\nu_{*+}\lambda-\rho_{*+}\kappa_{*+}(t_1^2-A^2) \}&=0,
\nonumber\\
\mu_{*+}\lambda\gamma_{N+}(t_2^2-B_+^2)
-t_2 (i\xi'\cdot\gamma'_+-t_2\gamma_{N+})
\{\nu_{*+}\lambda-\rho_{*+}\kappa_{*+}(t_2^2-A^2) \}&=0,
\nonumber
\end{align}
which furnishes that
\begin{align}
\label{425}
&\widehat{{\rm div}\,\mathbf{u}_+}
=(i\xi'\cdot\beta'_+-t_1\beta_{N+})e^{-t_1x_N}
+(i\xi\cdot\gamma'_+-t_2\gamma_{N+})e^{-t_2x_N},\\
\label{426}
&\beta_{j+}=-\cfrac{i\xi_j}{t_1}\beta_{N+},\quad 
\gamma_{j+}=\cfrac{i\xi_j}{t_2}\gamma_{N+}.
\end{align}
By (\ref{405}), (\ref{422}), and (\ref{423}), we have
\begin{align}
\label{427}
&i\xi'\cdot\alpha'_-+i\xi'\cdot\beta_-
+B_-\alpha_{N+}+B_-\beta_{N-}=0,\\
\label{428}
&\mu_{*-}(B^2_--A^2)\beta_{j-}-i\xi_j\gamma_-=0,\\
\label{429}
&\mu_{*-}(B^2_--A^2)\beta_{N-}-A\gamma_-=0.
\end{align}	
Combining (\ref{427}) and (\ref{428}), 
we deduce $i\xi'\cdot\beta'_-+A\beta_{N-}=0$. 
Hence, by (\ref{427}), (\ref{428}), and (\ref{429}), 
we observe	
\begin{align}
\label{430}
&i\xi\cdot\beta'_-
=\frac{A}{B_--A}(i\xi'\cdot\alpha'_-+B_-\alpha_{N-}),\\
\label{431}
&\beta_{N-}
=\frac{1}{A-B_-}(i\xi'\cdot\alpha'_-+B_-\alpha_{N-}),\\
\label{432}
&\gamma_-
=-\frac{\mu_{*-}(A+B_-)}{A}(i\xi'\cdot\alpha'_-+B_-\alpha_{N-}).
\end{align}	
	
Next, we consider the interface condition.
From (\ref{402}) and (\ref{410}), we have
\begin{align}\label{433}
\lambda\sigma_+A^2\widehat{H}(0)
=&2\mu_{*+}\lambda\{-B_+\alpha_{N+}-(t_1-B_+)\beta_{N+}
-(t_2-B_+)\gamma_{N+} \}\\
&+\{\lambda(\nu_{*+}-\mu_{*+})
-\rho_{*+}\kappa_{*+}(t_1^2-A^2) \}
(i\xi'\cdot\beta'_+-t_1\beta_{N+})\nonumber\\
&+\{\lambda(\nu_{*+}-\mu_{*+})
-\rho_{*+}\kappa_{*+}(t_2^2-A^2) \}
(i\xi'\cdot\gamma'_+-t_2\gamma_{N+}).\nonumber
\end{align}
Substituting (\ref{424}) and (\ref{426}) into (\ref{433}) to 
obtain
\begin{align}\label{434}
\alpha_{N+}
=&-\frac{\sigma_+A^2\widehat{H}(0)}{2\mu_{*+}B_+}
+\frac{1}{2t_1B_+}(2t_1B_+-B_+^2-A^2)\beta_{N+}
+\frac{1}{2t_2B_+}(2t_2B_+-B_+^2-A^2)\gamma_{N+}
\end{align}
because $\lambda\mu_{*+}\neq0$.	
In addition, by (\ref{424}) and (\ref{434}), it follows that
\begin{align} \label{435}
i\xi'\cdot\alpha'_+
=&-\frac{\sigma_+A^2\widehat{H}(0)}{2\mu_{*+}}
+\frac{A^2-B^2_+}{2t_1}\beta_{N+}
+\frac{A^2-B^2_+}{2t_2}\gamma_{N+}.
\end{align}
Together with (\ref{411}) and (\ref{435}), this shows	
\begin{align}\label{436}
i\xi'\cdot\alpha'_-
=&-\frac{\sigma_+A^2\widehat{H}(0)}{2\mu_{*+}}
+i\xi'\cdot\widehat{h'}(0)
+\frac{A^2-B^2_+}{2t_1}\beta_{N+}
+\frac{A^2-B^2_+}{2t_2}\gamma_{N+}.
\end{align}	
By (\ref{409}), we have
\begin{align*}
2\mu_{*-}\{B_-\alpha_{N-}+(B_--A)\beta_{N-}\}-\gamma_-
=\sigma_-A^2\widehat{H}(0).
\end{align*}
Combining with (\ref{431}) and (\ref{432}), this yields
\begin{align}\label{438}
\alpha_{N-}
=\frac{\sigma_-A^3\widehat{H}(0)}{\mu_{*-}(A+B_-)B_-}
+\frac{A-B_-}{(A+B_-)B_-}i\xi'\cdot\alpha'_-.
\end{align}
From (\ref{408}), we have
\begin{align}\label{439}
0
=&\mu_{*-}(B_-\alpha_{m-}+(B_--A)\beta_{m-}+i\xi_m\alpha_{N-})\\
&-\mu_{*+}(-B_+\alpha_{m+}-(t_1-B_+)\beta_{m+}
-(t_2-B_+)\gamma_{m+}+i\xi_m\alpha_{N+}),\nonumber
\end{align}	
which implies
\begin{align}\label{440}
0
=&\mu_{*-}(B_-i\xi'\cdot\alpha'_{-}
+(B_--A)i\xi'\cdot\beta'_{-}-A^2\alpha_{N-})\\
&-\mu_{*+}(-B_+i\xi'\cdot\alpha'_{+}
-(t_1-B_+)i\xi'\cdot\beta'_{+}
-(t_2-B_+)i\xi'\cdot\gamma'_{+}-A^2\alpha_{N+}).\nonumber
\end{align}
Substituting (\ref{426}), (\ref{430}), (\ref{434}), 
(\ref{435}), (\ref{436}), and (\ref{438}) into 
(\ref{440}), we obtain
\begin{align}\label{441}
&t_2(Dt_1-E)\beta_{N+}+t_1(Dt_2-E)\gamma_{N+}
=2t_1t_2\{A^2F\widehat{H}(0)
-Gi\xi'\cdot\widehat{h'}(0)\}
\end{align}
with	
\begin{align}\label{442}
D=&4\mu_{*+}A^2B_+B_-(A+B_-),\\
E=&\mu_{*+}B_-(A+B_-)(A^2+B_+^2)^2\nonumber\\
&+\mu_{*-}B_+(A^2-B_+^2)(A^3-3AB_--AB_-^2-B_+^3)\nonumber,\\
F=&
\frac{\sigma_+B_+}{2\mu_{*+}}
\{\mu_{*+}(A+B_-)(A^2+B_+B_-)\nonumber\\
&+\mu_{*-}(B_-^3+B_-^2A+3B_-A^2-A^3) \}
-\sigma_-A^2B_+(B_--A),\nonumber\\
G=&
\mu_{*+}B_+^2B_-(A+B_-)
+\mu_{*-}B_+(B_+^3+B_-^3+2B_-^2A+3B_-A^2-A^3)
\nonumber.
\end{align}	
By (\ref{402}) and (\ref{412}), we have
\begin{align}\label{443.1}
\rho_{*+}(t_1^2-A^2)\beta_{N+}+\rho_{*+}(t_2^2-A^2)\gamma_{N+}
=0.
\end{align}
Consequently, by (\ref{441}) and (\ref{443.1}), we have
\begin{align}\label{443}
L
\left(\begin{array}{cccc}
\beta_{N+}\\\gamma_{N+}	 
\end{array} \right)
=
\left(\begin{array}{cccc}
2t_1t_2\{A^2F\widehat{H}(0)
-Gi\xi'\cdot\widehat{h'}(0)\} \\
0
\end{array} \right),
\end{align}
where	
\begin{align*}
L=
\left(\begin{array}{cccc}
t_2(Dt_1-E) & t_1(Dt_2-E)  \\
\rho_{*+}(t_1^2-A^2) & \rho_{*+}(t^2_2-A^2) 
\end{array} \right).
\end{align*}	
By direct calculations, we have
\begin{align}\label{444}
\det L&
=\rho_{*+}(t_1-t_2)\{E(t^2_1+t_1t_2+t_2^2-A^2)
-Dt_1t_2(t_1+t_2)\}.
\end{align}
According to Saito \cite{Saito},
we have following formula:
\begin{align*}
&(A^2+B_+^2)^2(t^2_1+t_1t_2+t_2^2-A^2)-4A^2B_+t_1t_2(t_1+t_2)\\
=&
\frac{\lambda\mathfrak{m}_1(\lambda,\xi')}
{t_1(t_1+B_+)}
=
\frac{\lambda\mathfrak{m}_2(\lambda,\xi')}
{t_2(t_2+B_+)}
=:\lambda \mathfrak{n}(\lambda,\xi')\nonumber
\end{align*}
with
\begin{align*}
\mathfrak{m}_i
=&
\rho_{*+}^{2}\mu_{*+}^{-2}\lambda t_i(t_i+B_+)
(t_1^2+t_1t_2+t_2^2-A^2)\\
&+4A^2B_+\{s_it_iB_+(t_i+B_+)-(s_i-\rho_{*+}\mu_{*+}^{-1})
t_1t_2(t_1+t_2) \} \quad (i=1,2)
\end{align*}	
Then, we rewrite (\ref{444}) as follows:
\begin{align}\label{det:L}
\det L
=&\rho_{*+}\lambda(t_1-t_2)
\Big( \mu_{*+}B_-(A+B_-)\mathfrak{n}(\lambda,\xi')\\
&-\rho_{*+}\mu_{*+}\mu_{*-}B_+(A^3-3A^2B_--AB_-^2-B_+^3)
(t^2_1+t_1t_2+t_2^2-A^2) \Big)\nonumber\\
=&:\rho_{*+}\lambda(t_1-t_2)\mathfrak{l}(\lambda,\xi').
\nonumber
\end{align}
If $\det L\neq 0$, the inverse of $L$ exists and we see
\begin{align}\label{445}
\left(\begin{array}{cccc}
\beta_{N+}\\\gamma_{N+}	 
\end{array} \right)
=L^{-1}
\left(\begin{array}{cccc}
2t_1t_2\{A^2F\widehat{H}(0)
-Gi\xi'\cdot\widehat{h'}(0)\} \\
0
\end{array} \right)
\end{align}	
with
\begin{align}\label{446}
L^{-1}
=
\frac{1}{\det L}
\left(\begin{array}{cccc}
\rho_{*+}(t^2_2-A^2) & -t_1(Dt_2+E)  \\
-\rho_{*+}(t^2_1-A^2) & t_2(Dt_1+E)
\end{array} \right)
=:
\frac{1}{\det L}
\left(\begin{array}{cccc}
L_{11} & L_{12}  \\
L_{21} & L_{22}
\end{array} \right).
\end{align}	
In this section, we assume $\det L\neq 0$ and continue to obtain 
the solution formula.
We shall prove $\det L\neq 0$ when 
$(\lambda,\xi')\in \widetilde{\Sigma}_{\varepsilon,\lambda_0}$ 
in Sect. \ref{Sect.Lopatinski}.
By (\ref{445}) and (\ref{446}), we obtain	
\begin{align*}
\beta_{N+}
=&-\frac{2t_1t_2GL_{11}}{\det L}i\xi'\cdot\widehat{h'}(0)
+\frac{2t_1t_2A^2FL_{11}}{\det L}\widehat{H}(0) ,\nonumber\\
\gamma_{N+}
=&-\frac{2t_1t_2GL_{21}}{\det L}i\xi'\cdot\widehat{h'}(0)
+\frac{2t_1t_2A^2FL_{21}}{\det L}\widehat{H}(0) \nonumber.
\end{align*}
{\allowdisplaybreaks
Setting	
\begin{align}\label{448}
&Q^+_{Nm,1}
=-\frac{2i\xi_mt_1t_2GL_{11}(t_1-B_+)}{A\det L},
\quad Q_{NN,1}^+=\frac{2t_1t_2FL_{11}(t_1-B_+)}{\det L},
\\
&Q^+_{Nm,2}
=-\frac{2i\xi_mt_1t_2GL_{21}(t_2-B_+)}{A\det L},
\quad Q_{NN,2}^+=\frac{2t_1t_2FL_{21}(t_2-B_+)}{\det L},
\nonumber
\end{align}
with $m=1,\dots,N-1$,
we have	
\begin{align}\label{449}
\beta_{N+}=
&\frac{A}{t_1-B_+}\Big\{\sum_{m=1}^{N-1} 
Q_{Nm,1}^+ \widehat{h}_m(0) +AQ_{NN,1}^+\widehat{H}(0)
\Big\},\\
\gamma_{N+}=
&\frac{A}{t_2-B_+}\Big\{\sum_{m=1}^{N-1} 
Q_{Nm,2}^+\widehat{h}_m(0) +AQ_{NN,2}^+\widehat{H}(0)
\Big\}.\nonumber
\end{align}
From (\ref{426}), we have
\begin{align*}
\beta_{j+}=
&\frac{A}{t_1-B_+}
\Big\{\sum_{m=1}^{N-1}Q_{jm,1}^+ \widehat{h}_m(0) 
+AQ_{jN,1}^+\widehat{H}(0)
\Big\},\\
\gamma_{j+}=
&\frac{A}{t_2-B_+}
\Big\{\sum_{m=1}^{N-1}Q_{jm,2}^+ \widehat{h}_m(0)
+AQ_{jN,2}^+\widehat{H}(0)\Big\}.\nonumber
\end{align*}
with	
\begin{align*}
&Q^+_{jm,1}
=-\frac{2\xi_m \xi_j t_2GL_{11}(t_1-B_+)}{A\det L},
\quad Q_{jN,1}^+=-\frac{2i\xi_jt_2FL_{11}(t_1-B_+)}
{\det L},
\nonumber\\
&Q^+_{jm,2}
=-\frac{2\xi_m \xi_j t_1GL_{21}(t_2-B_+)}{A\det L},
\quad Q_{jN,2}^+=-\frac{2i\xi_jt_2FL_{21}(t_2-B_+)}
{\det L}.
\nonumber
\end{align*}
Furthermore, combined with (\ref{425}), (\ref{426}), 
and (\ref{449}),
we have	
\begin{align*}
\widehat{\rho}_+=&AM_{0+}(x_N) 
\Big\{\sum_{m=1}^{N-1} P_{m,1}^+ 
\widehat{h}_m(0)+AP_{N,1}^+ \widehat{H}(0) \Big\}
+Ae^{-t_1 x_N}\Big\{\sum_{m=1}^{N-1} P_{m,2}^+ 
\widehat{h}_m(0)+AP_{N,2}^+ \widehat{H}(0) \Big\}
\end{align*}
with
\begin{alignat*}2
P_{m,1}^+&=-\frac{2\rho_{*+}s_1s_2i\xi_mt_1G}
{A\mathfrak{l}(\lambda,\xi')},
&\quad
P_{N,1}^+&=\frac{2\rho_{*+}s_1s_2t_1F}
{\mathfrak{l}(\lambda,\xi')},
\\
P_{m,2}^+&=\frac{2\rho_{*+}s_1s_2i\xi_m(t_2-t_1)G}
{A\mathfrak{l}(\lambda,\xi')},
&\quad
P_{N,2}^+&=\frac{2\rho_{*+}s_1s_2(t_1-t_2)F}
{\mathfrak{l}(\lambda,\xi')}.
\end{alignat*}	
By (\ref{434}), we have
\begin{align*}
\alpha_{N+}
=A\Big\{\sum_{m=1}^{N-1} R_{Nm}^+ \widehat{h}_m(0) 
+AR_{NN}^+\widehat{H}(0) \Big\}
\end{align*}
with
\begin{align}\label{452}
R_{Nm}^+
=&
-\frac{i\xi_mG\{s_1(2t_2B_+-B_+^2-A^2)t_1
+s_2(2t_1B_+-B_+^2-A^2)t_2 \}}
{(t_1-t_2)AB_+\mathfrak{l}(\lambda,\xi')},\\
R^+_{NN}=&
\frac{F\{s_1(2t_2B_+-B_+^2-A^2)t_2+s_2(2t_1B_+-B_+^2-A^2)t_1\}}
{(t_1-t_2)B_+\mathfrak{l}(\lambda,\xi')}
-\frac{\sigma_+}{2\mu_{*+}B_+}.\nonumber
\end{align}	
Substituting (\ref{449}) into (\ref{436}) to obtain
\begin{align}\label{453}
i\xi'\cdot\alpha'_-
=&\sum_{m=1}^{N-1}
\Big(\frac{A^2(A^2-B_+^2)}{2t_1(t_1-B_+)} Q^+_{Nm,1}
+\frac{A^2(A^2-B_+^2)}{2t_2(t_2-B_+)} Q^+_{Nm,2}+i\xi_m
\Big)\widehat{h_m}(0)\\
&+\Big(\frac{A^3(A^2-B_+^2)}{2t_1(t_1-B_+)}Q^+_{NN,1}
+\frac{A^3(A^2-B_+^2)}{2t_2(t_2-B_+)}Q^+_{NN,2}
-\frac{\sigma_+A^2}{2\mu_{*+}}
\Big)\widehat{H}(0).
\nonumber
\end{align}
Combining (\ref{438}) and (\ref{453}), we obtain
\begin{align}\label{454}
\alpha_{N-}
=A\Big\{\sum_{m=1}^{N-1} R_{Nm}^- \widehat{h}_m(0)
+AR_{NN}^-\widehat{H}(0) \Big\}
\end{align}
with
\begin{align}\label{455}
R^-_{Nm}
=&\frac{A-B_-}{(A+B_-)B_-}
\Big(\frac{A(A^2-B_+^2)}{2t_1(t_1-B_+)} Q^+_{Nm,1}
+\frac{A(A^2-B_+^2)}{2t_2(t_2-B_+)} Q^+_{Nm,2}
+\frac{i\xi_m}{A}\Big),\\
R^-_{NN}
=&\frac{A-B_-}{(A+B_-)B_-}
\Big(\frac{A(A^2-B_+^2)}{2t_1(t_1-B_+)}Q^+_{NN,1}
+\frac{A(A^2-B_+^2)}{2t_2(t_2-B_+)}Q^+_{NN,2}
-\frac{\sigma_+}{2\mu_{*+}}
\Big)
\nonumber\\
&+\frac{\sigma_-A}{\mu_{*-}(A+B_-)B_-}.\nonumber
\end{align}	 
 Substituting (\ref{453}) and (\ref{454}) 
into (\ref{431}) and (\ref{432}), we have
\begin{align}\label{456}
\beta_{N-}=
&\frac{A}{B_--A}
\Big\{\sum_{m=1}^{N-1} Q_{Nm}^- \widehat{h}_m(0)
+AQ_{NN}^-\widehat{H}(0) \Big\},\\
\gamma_-=
&\sum_{m=1}^{N-1} 
P_m^- \widehat{h}_m(0)+AP_N^-\widehat{H}(0), \nonumber
\end{align}
respectively. Here we set
\begin{align*}
Q^-_{Nm}
=&-\frac{1}{A+B_-}
\Big(\frac{A^2(A^2-B_+^2)}{2t_1(t_1-B_+)} Q^+_{Nm,1}
+\frac{A^2(A^2-B_+^2)}{2t_2(t_2-B_+)} Q^+_{Nm,2}+i\xi_m
\Big),
\nonumber\\
Q^-_{NN}
=&-\frac{1}{A+B_-}
\Big(\frac{A^2(A^2-B_+^2)}{2t_1(t_1-B_+)}Q^+_{NN,1}
+\frac{A^2(A^2-B_+^2)}{2t_2(t_2-B_+)}Q^+_{NN,2}
-\frac{\sigma_+A}{2\mu_{*+}}\Big)
-\frac{\sigma_-A}{\mu_{*-}(A+B_-)},\nonumber\\
P_m^-
=&- \mu_{*-}(A+B_-) Q^-_{Nm},\nonumber\\
P_N^-
=&- \mu_{*-}(A+B_-) Q^-_{NN},
\nonumber
\end{align*}	
for short.
From (\ref{428}) and (\ref{456}), we have
\begin{align}\label{457}
\beta_{j-}=
&\frac{A}{B_--A}\Big\{\sum_{m=1}^{N-1} Q_{jm}^- \widehat{h}_m(0)  
+AQ_{jN}^-\widehat{H}(0) \Big\}
\end{align}
with
\begin{align*}
&Q_{jm}^-
=-\frac{i\xi_j}{A} Q^-_{Nm} ,
\quad Q_{jN}^-=-\frac{i\xi_j}{A} Q_{NN}^- .
\end{align*}
Accordingly, by (\ref{411}), (\ref{439}), and (\ref{457}), 
we obtain	 
\begin{align*}
\alpha_{j+}
=&A\Big\{\sum_{m=1}^{N-1} R_{jm}^+ \widehat{h}_m(0)
+AR_{jN}^+\widehat{H}(0) \Big\}
+S^+_{j}\widehat{h}_j,\quad\\
\alpha_{j-}
=&A\Big\{\sum_{m=1}^{N-1} R_{jm}^- \widehat{h}_m(0) 
+AR_{jN}^-\widehat{H}(0) \Big\}
+S^-_{j}\widehat{h}_j,\nonumber
\end{align*}
with
\begin{align*}
R^+_{jm}
=&-(\mu_{*+}B_++\mu_{*-}B_-)^{-1}
\Big(\mu_{*+}(AQ^+_{jm,1}+AQ^+_{jm,2}-i\xi_j R^+_{Nm})
+\mu_{*-}(Q^-_{jm}+i\xi_jR_{Nm}^-) \Big),
\nonumber\\
R^+_{jN}
=&-(\mu_{*+}B_++\mu_{*-}B_-)^{-1}
\Big(\mu_{*+}(AQ^+_{jN,1}+AQ^+_{jN,2}-i\xi_j R^+_{NN})
+\mu_{*-}(Q^-_{jN}+i\xi_jR_{NN}^-) \Big),
\nonumber\\
S_{j}^+
=&-\mu_{*-}B_-(\mu_{*+}B_++\mu_{*-}B_-)^{-1},\quad
R^-_{jm}
=R^+_{jm},\quad
R^-_{jN}
=R^+_{jN},\nonumber\\
S_{j}^-
=&\mu_{*+}B_+(\mu_{*+}B_++\mu_{*-}B_-)^{-1}.
\nonumber
\end{align*}	
This completes the proof of (\ref{4.428}).
	 
To prove Theorem \ref{Th05}, we consider problem (\ref{204}),
namely, problem (\ref{401}) with $H=0$.
First of all, we define our solution operators 
$\mathcal{A}_{Ji}^{3+}(\lambda)$
$(i=1,2,3,4 )$, $\mathcal{A}_{Ji}^{3-}(\lambda)$
$(i=1,2,3)$, $\mathcal{B}_+^3(\lambda)$,
and $\mathcal{P}_-^3(\lambda)$ of problem (\ref{4.428})
such that
\begin{alignat*}2
u^+_{Ji}&=\mathcal{A}_{Ji}^{3+}(\lambda) \mathbf{h} 
&\quad
&\text{ on $\mathbb{R}^N_+\enskip (i=1,2,3,4 )$},\\
u^-_{Ji}&=\mathcal{A}_{Ji}^{3-}(\lambda) \mathbf{h} 
&\quad
&\text{ on $\mathbb{R}^N_-\enskip (i=1,2,3)$}\nonumber,\\
\rho_+&=\mathcal{B}^3_+(\lambda) \mathbf{h} 
&\quad
&\text{ on $\mathbb{R}^N_+$}\nonumber,\\
\pi_-&=\mathcal{P}_-^3(\lambda) \mathbf{h} 
&\quad
&\text{ on $\mathbb{R}^N_-$}\nonumber
\end{alignat*}
with
\begin{align*}
\mathcal{A}_{J1}^{3+}(\lambda) \mathbf{h}
=&\sum_{m=1}^{N-1}\mathcal{F}^{-1}_{\xi'}\Big[
Q _{Jm,1}^+ A M_{1+}(x_N)\widehat{h}_m(0)
\Big](x'),\nonumber\\
\mathcal{A}_{J2}^{3+}(\lambda) \mathbf{h}
=&\sum_{m=1}^{N-1}\mathcal{F}^{-1}_{\xi'}\Big[
Q _{Jm,2}^+ A M_{2+}(x_N)\widehat{h}_m(0)
\Big](x'),\nonumber\\
\mathcal{A}_{J3}^{3+}(\lambda) \mathbf{h}
=&\sum_{m=1}^{N-1}\mathcal{F}^{-1}_{\xi'}
\Big[R ^+_{Jm} Ae^{-B_+x_N}\widehat{h}_m(0)
\Big](x'),\nonumber\\
\mathcal{A}_{j4}^{3+}(\lambda) \mathbf{h}
=&\mathcal{F}^{-1}_{\xi'}
\Big[S^+_{j}e^{-B_+x_N}\widehat{h}_j(0)
\Big](x'),\nonumber\\
\mathcal{A}_{N4}^{3+}(\lambda) \mathbf{h}
=&0,\nonumber\\
\mathcal{A}_{J1}^{3-}(\lambda) \mathbf{h}
=&\sum_{m=1}^{N-1}\mathcal{F}^{-1}_{\xi'}\Big[
Q_{Jm}^- A M_{-}(x_N)\widehat{h}_m(0)
\Big](x'),\nonumber\\
\mathcal{A}_{J2}^{3-}(\lambda) \mathbf{h}
=&\sum_{m=1}^{N-1}\mathcal{F}^{-1}_{\xi'}
\Big[R ^-_{Jm} Ae^{B_-x_N}\widehat{h}_m(0)
\Big](x'),\nonumber\\
\mathcal{A}_{j3}^{3-}(\lambda) \mathbf{h}
=&\mathcal{F}^{-1}_{\xi'}
\Big[S^-_{j}e^{B_-x_N}\widehat{h}_j(0)
\Big](x'),\nonumber\\
\mathcal{A}_{N3}^{3-}(\lambda) \mathbf{h}
=&0,\nonumber\\
\mathcal{B}_{+}^{3}(\lambda) \mathbf{h}
=&\mathcal{F}^{-1}_{\xi'}
\sum_{m=1}^{N-1}\Big[
P^+_{m,1} A M_{0+}(x_N) \widehat{h}_m(0)
\Big](x')+\mathcal{F}^{-1}_{\xi'}
\sum_{m=1}^{N-1}\Big[
P^+_{m,2} Ae^{-t_1x_N} \widehat{h}_m(0)
\Big](x'),\nonumber\\
\mathcal{P}_{-}^{3}(\lambda) \mathbf{h}
=&\sum_{m=1}^{N-1}\mathcal{F}^{-1}_{\xi'}
\Big[ P ^-_{m} e^{Ax_N}\widehat{h}_m(0)
\Big](x').\nonumber
\end{align*}	 
In order to prove Theorem \ref{Th05},
we introduce following lemma and corollary.
\begin{lemm}\label{lem.4.4}
Let $1<q<\infty$, $\lambda_0\geq0$, 
$\varepsilon_*<\varepsilon<\pi/2$.
Assume that $\rho_{*+}\neq\rho_{*-}$, $\eta_*\neq 0$,
and $\kappa_{*+}\neq \mu_{*+}\nu_{*+}$.
For $m(\lambda,\xi')\in \mathbb{M}_{0,2,\varepsilon,\lambda_0}$,
$i=0,1,2,3$, and $j=1,2$,
we define operators $J_i(\lambda)$, $K_i(\lambda)$, 
and $L_j(\lambda)$ by		 
\begin{align*}
[J_i(\lambda)f](x)&=\int_{0}^{\infty}\mathcal{F}^{-1}_{\xi'}
\Big[m(\lambda,\xi') \lambda^{1/2}AM_{i+}(x_N+y_N)
\widehat{f}(\xi',y_N) \Big](x')
\,dy_N,\\
[K_i(\lambda)f](x)&=\int_{0}^{\infty}\mathcal{F}^{-1}_{\xi'}
\Big[m(\lambda,\xi') A^2M_{i+}(x_N+y_N)
\widehat{f}(\xi',y_N) \Big](x')
\,dy_N,\\
[L_j(\lambda)f](x)&=\int_{0}^{\infty}\mathcal{F}^{-1}_{\xi'}
\Big[m(\lambda,\xi') Ae^{-t_j(x_N+y_N)}
\widehat{f}(\xi',y_N) \Big](x')
\,dy_N.
\end{align*}	 
Then, for $i=0,1,2$, $j=1,2$, and $s=0,1$,
the sets $\{(\tau\partial\tau)^s J_i(\lambda) \}$,
$\{(\tau\partial\tau)^s K_i (\lambda)\}$, and
$\{(\tau\partial\tau)^s L_j (\lambda)\}$ are 
$\mathcal{R}$-bounded families in 
$\mathcal{L}(L_q(\mathbb{R}^N_+))$,
whose $\mathcal{R}$-bounds do not exceed some constant
$C_{N,q,\lambda_0,\varepsilon,\mu_{*+},\nu_{*+},\kappa_{*+}}$
depending essentially only on 
$N$, $q$, $\lambda_0$, $\varepsilon$, $\mu_{*+}$,
$\nu_{*+}$, and $\kappa_{*+}$.
\end{lemm}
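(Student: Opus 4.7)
My plan is to reduce $J_i$, $K_i$, and $L_j$ to a single class of Poisson-type Fourier integral operators whose $\mathcal{R}$-boundedness on $L_q(\mathbb{R}^N_+)$ is established by the same technique Shibata used in \cite{Shibata2014a} and \cite{Shibata2016}, combined with a Frullani-type identity to dispose of the singular denominators $1/(t_i-B_+)$ and $1/(t_2-t_1)$ appearing inside $M_{i+}$ and $M_{0+}$. The essential inputs from elsewhere in the paper are the root bounds collected in Lemma \ref{lem601}: there is $c>0$ such that $\operatorname{Re} t_i$, $\operatorname{Re} B_+ \geq c(|\lambda|^{1/2}+A)$ on $\widetilde{\Sigma}_{\varepsilon,\lambda_0}$, together with the multiplier memberships $t_i, B_+ \in \mathbb{M}_{1,1,\varepsilon,\lambda_0}$ and $(t_i-B_+)^{-1}, (t_1-t_2)^{-1} \in \mathbb{M}_{-1,2,\varepsilon,\lambda_0}$. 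For any $r_1,r_2\in\{t_1,t_2,B_+\}$ the convex combination $\omega_\theta:=\theta r_1+(1-\theta)r_2$ then inherits, uniformly in $\theta\in[0,1]$, the same lower bound on its real part and the same multiplier estimates, which will allow interchange of the $\theta$-integral with the $\mathcal{R}$-bound.

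For $L_j$ the symbol factorises as $(m\cdot A)\,e^{-t_j(x_N+y_N)}$ with $m\cdot A\in\mathbb{M}_{1,2,\varepsilon,\lambda_0}$ by Remark~\ref{rema.multi} and Lemma \ref{lem:multiplier}, so the operator is of exactly the Poisson-type treated by Shibata. I would invoke (or reprove by Volevich's trick together with Bourgain's UMD-valued Fourier multiplier theorem) the standard $\mathcal{R}$-boundedness lemma for such Poisson operators to conclude. Differentiating in $\tau$ produces factors $\tau\partial_\tau t_j\in\mathbb{M}_{1,1,\varepsilon,\lambda_0}$ times $-(x_N+y_N)e^{-t_j(x_N+y_N)}$; the elementary bound $|z\,e^{-\omega z}|\leq C/\operatorname{Re}\omega$ absorbs the extra factor $z$ into a gain of one power of $(|\lambda|^{1/2}+A)^{-1}$, and the same Shibata lemma applies, handling the $s=1$ case.

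For $J_i$ and $K_i$ with $i=1,2$ I would use the Frullani identity
\begin{equation*}
M_{i+}(x_N+y_N)=-\int_0^1 (x_N+y_N)\,e^{-\omega_\theta(x_N+y_N)}\,d\theta,\qquad \omega_\theta:=\theta t_i+(1-\theta)B_+,
\end{equation*}
to remove the denominator $t_i-B_+$ and express the kernel as an average of pure Poisson kernels. Writing $\lambda^{1/2}A(x_N+y_N)e^{-\omega_\theta(x_N+y_N)}=(\lambda^{1/2}A/\omega_\theta)\cdot\omega_\theta(x_N+y_N)e^{-\omega_\theta(x_N+y_N)}$, the first factor $\lambda^{1/2}A/\omega_\theta$ lies in $\mathbb{M}_{1,2,\varepsilon,\lambda_0}$ uniformly in $\theta$, while the second factor is a bounded Poisson-type kernel of the same form as in $L_j$. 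Applying the Shibata lemma for each fixed $\theta$ and integrating in $\theta\in[0,1]$ yields the $\mathcal{R}$-bound for $J_i$; for $K_i$ one argues identically with $A^2/\omega_\theta\in\mathbb{M}_{1,2,\varepsilon,\lambda_0}$. The case $i=0$ is handled in the same way after replacing $B_+$ by $t_1$ in $\omega_\theta$.

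The main obstacle will be the uniform-in-$\theta$ bookkeeping: I need Mikhlin-type estimates on all $\xi'$-derivatives of $e^{-\omega_\theta(x_N+y_N)}$ and on $\tau\partial_\tau$-derivatives thereof that are independent of $\theta\in[0,1]$, so that the Shibata lemma applies pointwise in $\theta$ with a $\theta$-independent $\mathcal{R}$-bound. Each such derivative produces a factor of the form $(x_N+y_N)^k\,\partial^{\alpha'}\omega_\theta\cdots$ against the exponential, and the polynomial factors $(x_N+y_N)^k$ are controlled by the repeated use of $|z^k e^{-\omega z}|\le C_k|\omega|^{-k}$ and the membership $\omega_\theta^{-1}\in\mathbb{M}_{-1,1,\varepsilon,\lambda_0}$, uniformly in $\theta$. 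Once this bookkeeping is done, the conclusion for $s=0,1$ follows directly from the Poisson-type Shibata lemma applied to the resulting $\theta$-parametrised family.
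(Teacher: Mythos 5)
Your approach is essentially the same as the paper's: both rewrite $M_{i+}(x_N)$ via the Frullani identity $M_{i+}(x_N)=-x_N\int_0^1 e^{-\omega_\theta x_N}\,d\theta$ with $\omega_\theta$ a convex combination of two of $\{t_1,t_2,B_+\}$, establish $\theta$-uniform Mikhlin-type derivative bounds (the paper cites these for $M_{0+}$ from Saito \cite{Saito}, Lemma~4.8, and for the $K_1,K_2$ case uses exactly your convex-combination representation), deduce the kernel decay $|k_\lambda(x)|\le C|x|^{-N}$, and conclude by the $\mathcal{R}$-boundedness criterion for such singular-integral families in Shibata--Shimizu \cite{SS2012}, Lemma~5.4. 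One side remark in your plan is incorrect, though not load-bearing: $(t_i-B_+)^{-1}$ and $(t_1-t_2)^{-1}$ do \emph{not} belong to $\mathbb{M}_{-1,2,\varepsilon,\lambda_0}$. Since $t_j^2-B_+^2=(s_j-\rho_{*+}\mu_{*+}^{-1})\lambda$ and $t_1^2-t_2^2=(s_1-s_2)\lambda$, one has $|t_j-B_+|\sim|\lambda|/(|\lambda|^{1/2}+A)$, hence $|(t_j-B_+)^{-1}|\sim(|\lambda|^{1/2}+A)/|\lambda|$, which is much larger than $(|\lambda|^{1/2}+A)^{-1}$ once $A\gg|\lambda|^{1/2}$. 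That is precisely the reason the Frullani identity is required here; your actual argument correctly sidesteps the false membership claim by using Frullani, so delete that clause. Also, the spectral estimate $\operatorname{Re}t_i\ge c(|\lambda|^{1/2}+A)$ you attribute to Lemma~\ref{lem601} is actually Lemma~\ref{lem:t}; Lemma~\ref{lem601} is the Lopatinski-determinant bound and only supplies the admissible range $\varepsilon\in(\varepsilon_*,\pi/2)$.
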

\begin{proof}
First we consider $K_0(\lambda)$.
Setting $$k_{0,\lambda}(x)=
\mathcal{F}^{-1}_{\xi'}[m(\lambda,\xi')A^2M_{0+}(x_N)](x'),$$
we have	 
\begin{align*}
[K_0(\lambda)f](x)=\int_{\mathbb{R}^N_+}
k_{0,\lambda}(x'-y',x_N+y_N)f(y)\,dy.
\end{align*}
Employing the same argumentation
due to Shibata and Shimizu \cite[Lemma 5.4]{SS2012},
it is sufficient to prove
\begin{align}\label{458}
|(\tau\partial_\tau)^sk_{0,\lambda}(x)|\leq
C_{N,\lambda_0,\varepsilon,\mu_{*+},\nu_{*+},\kappa_{*+}}
|x|^{-N}\quad(s=0,1).
\end{align}
According to Saito \cite[Lemma 4.8]{Saito}, we have
\begin{align*}
|\partial_{\xi'}^{\alpha'}\{(\tau\partial_\tau)^s
M_{0+}(x_N) \}|\leq Cx_N(|\lambda|^{1/2}+A)^{-|\alpha'|}
e^{-b(|\lambda|^{1/2}+A)x_N}
\quad (s=0,1).
\end{align*}
Here and in the sequel, $b$ is a positive constant 
depending on $\varepsilon$,
$\mu_{*+}$, $\nu_{*+}$, and $\kappa_{*+}$. 
On the other hand, $C$ is a positive constant depending on
$\alpha'$, $\lambda_0$, $\varepsilon$, $\mu_{*+}$, $\nu_{*+}$,
and $\kappa_{*+}$.	
Then, by the Leibniz rule and the assumption we have
\begin{align}\label{459}
&|\partial_{\xi'}^{\alpha'}\{
(\tau\partial_\tau)^s m(\lambda,\xi')A^2M_{0+}(x_N) \}|\\
&\quad\leq
C_{\alpha',\lambda_0,\varepsilon,\mu_{*+},\nu_{*+},\kappa_{*+}}
A^{1-|\alpha'|}e^{-(b/2)(|\lambda|^{1/2}+A) x_N}
\quad(s=0,1)\nonumber,
\end{align}
which, combined with Theorem 3.6 in Shibata and Shimizu
\cite{SS2012}, furnishes that
\begin{align*}
|(\tau\partial_\tau)^sk_{0,\lambda}(x)|\leq
C_{N,\lambda_0,\varepsilon,\mu_{*+},\nu_{*+},\kappa_{*+}}
|x'|^{-N}\quad(s=0,1).
\end{align*}	 
On the other hand, using (\ref{459}) with
$s=0$ and $\alpha'=0$, for $s=0,1$, we have
\begin{align*}
|(\tau\partial_\tau)^s k_{0,\lambda}(x)|
&\leq C_{\lambda_0,\varepsilon,\mu_{*+},\nu_{*+},\kappa_{*+}}
\Big(\frac{1}{2\pi}\Big)^{N-1}\int_{\mathbb{R}^{N-1}}
Ae^{-(b/2) Ax_N} \,d\xi'\\
&\leq |x_N|^{-N} 
C_{\lambda_0,\varepsilon,\mu_{*+},\nu_{*+},\kappa_{*+}}
\Big(\frac{1}{2\pi}\Big)^{N-1}\int_{\mathbb{R}^{N-1}}
|\eta'|e^{-(b/2)|\eta'|} \,d\eta',
\end{align*}
which, combined with (\ref{459}), implies (\ref{458}).
Thus this completes the case $K_0(\lambda)$.	 
	 
Next we consider $K_1(\lambda)$ and $K_2(\lambda)$.
By the identities:
\begin{align}\label{identity}
B_+^2=&A^2+\frac{\rho_{*+}\lambda}{\mu_{*+}}
=-\sum_{m=1}^{N-1}(i\xi_m)^2+\frac{\rho_{*+}\lambda}{\mu_{*+}},\\
t_j^2=&A^2+s_j\lambda=\sum_{m=1}^{N-1}(i\xi_m)^2+s_j\lambda
\quad (j=1,2),\nonumber
\end{align}
we have for $j=1,2$
\begin{align*}
M_{j+}(x_N)=\mathfrak{r}_j(\lambda,\xi')
\Big(\frac{e^{-t_jx_N} -e^{-B_+x_N}}{t_j-B_+} \Big),\quad
\mathfrak{r}_j(\lambda,\xi')
=\frac{(s_j-\rho_{*+}\mu_{*+}^{-1})(t_1+t_2)}
{(s_2-s_1)(t_j+B_+)}.
\end{align*}	 
Since $M_{j+}(x_N)=-\mathfrak{r}_j(\lambda,\xi')x_N
\int_{0}^{1}e^{-\{\theta t_j +(1-\theta)B_+\}x_N}\,d\theta$
and $\mathfrak{r}_{j}(\lambda,\xi')
\in \mathbb{M}_{0,1,\varepsilon,0}$ by (\ref{505}) below,
we can prove the required properties in the same manner 
as the case $K_{0}(\lambda)$.
In addition, we can prove the case 
$J_i(\lambda)$ ($i=0,1,2$) in the
same manner as the case $K_0(\lambda)$, so that we may omit
those proof.	 
	 
Then, we consider $L_1(\lambda)$.
If we set $$l_{1,\lambda}(x)=
\mathcal{F}^{-1}_{\xi'}[m(\lambda,\xi')Ae^{-t_1x_N}](x'),$$
the operator $L_1(\lambda)$ is given by the formula:
\begin{align*}
[L_1(\lambda)f](x)=\int_{\mathbb{R}^N_+}
l_{1,\lambda}(x'-y',x_N+y_N)f(y)\,dy,
\end{align*} 
so that to prove that $L_1$ has the required properties
it is sufficient to prove	 
\begin{align}\label{460}
|(\tau\partial_\tau)^sl_{1,\lambda}(x)|\leq
C_{N,\lambda_0,\varepsilon,\mu_{*+},\nu_{*+},\kappa_{*+}}
|x|^{-N}\quad(s=0,1).
\end{align}
According to Saito \cite[Lemma 4.5]{Saito} and Shibata and
Shimizu \cite[Lemma 5.3]{SS2012}, we have	 
\begin{align*}
|\partial_{\xi'}^{\alpha'}\{(\tau\partial_\tau)^s
e^{-t_1 x_N} \}|\leq C (|\lambda|^{1/2}+A)^{-|\alpha'|}
e^{-b(|\lambda|^{1/2}+A)x_N}
\quad (s=0,1).
\end{align*}	 
Then, by the Leibniz rule and the assumption we have
\begin{align}\label{461}
&|\partial_{\xi'}^{\alpha'}\{
(\tau\partial_\tau)^s m(\lambda,\xi')Ae^{-t_1x_N} \}|\\
&\quad\leq
C_{\alpha',\lambda_0,\varepsilon,\mu_{*+},\nu_{*+},\kappa_{*+}}
A^{1-|\alpha'|}e^{-b(|\lambda|^{1/2}+A) x_N},
\nonumber
\end{align}	 
which, combined with Theorem 3.6 in Shibata and Shimizu
\cite{SS2012}, furnishes that
\begin{align*}
|(\tau\partial_\tau)^s l_{1,\lambda}(x)|\leq
C_{N,\lambda_0,\varepsilon,\mu_{*+},\nu_{*+},\kappa_{*+}}
|x'|^{-N}\quad (s=0,1).
\end{align*}
On the other hand, using (\ref{461}) with
$s=0$ and $\alpha'=0$, for $s=0,1$, we have
\begin{align*}
|(\tau\partial_\tau)^s l_{1,\lambda}(x)|
&\leq C_{\lambda_0,\varepsilon,\mu_{*+},\nu_{*+},\kappa_{*+}}
\Big(\frac{1}{2\pi}\Big)^{N-1}\int_{\mathbb{R}^{N-1}}
Ae^{-bAx_N} \,d\xi'\\
&\leq |x_N|^{-N} 
C_{\lambda_0,\varepsilon,\mu_{*+},\nu_{*+},\kappa_{*+}}
\Big(\frac{1}{2\pi}\Big)^{N-1}\int_{\mathbb{R}^{N-1}}
|\eta'|e^{-b|\eta'|} \,d\eta',
\end{align*}
which, combined with (\ref{461}), implies (\ref{460}).
Hence, this completes the case $L_1(\lambda)$.	 
Furthermore, we can prove the case $L_2(\lambda)$ 
in the same manner as the case $L_1(\lambda)$, 
so that we omit its proof.
\end{proof}	 
	 
\begin{corr}\label{cor.operator}
Let $1<q<\infty$ and $\varepsilon_*<\varepsilon<\pi/2$.
Assume that $\rho_{*+}\neq\rho_{*-}$, $\eta_*\neq 0$,
and $\kappa_{*+}\neq \mu_{*+}\nu_{*+}$.
Set $N_1=N+1$ and $N_2=N^2+N+1$.

(1) Let $r=1,2$. For 
$k_{r,1}(\lambda,\xi')\in \mathbb{M}_{r-3,2,\varepsilon,0}$
and $l_r(\lambda,\xi')\in \mathbb{M}_{r-4,2,\varepsilon,0}$,
we define operators 
$K_0^r(\lambda)$, $L_{j}^r(\lambda)$ ($j=1,2$) by	
\begin{align*}
[K_0^r(\lambda)f](x)
&=\mathcal{F}^{-1}_{\xi'}\Big[k_{r,1}(\lambda,\xi')A
M_{0+}(x_N)\widehat{f}(\xi',0) \Big](x'),\\
[L_j^r(\lambda)f](x)
&=\mathcal{F}^{-1}_{\xi'}\Big[l_r(\lambda,\xi')A
e^{-t_j x_N}\widehat{f}(\xi',0) \Big](x')
\quad(j=1,2),
\end{align*}
for $\lambda\in \Sigma_{\varepsilon,\lambda_0}$, and
$f\in W^r_q(\mathbb{R}^N_+)$.
Then, there exist operators $\widetilde{K}^r_0(\lambda)$,
$\widetilde{L}_j^r(\lambda)$, with
\begin{align*}
\widetilde{K}^r_0(\lambda), \widetilde{L}_j^r(\lambda)
\in {\rm Anal}\,(\Sigma_{\varepsilon,\lambda_0},
\mathcal{L}(L_q(\mathbb{R}_+)^{N_r},W^3_q(\mathbb{R}^N_+))),
\end{align*}	
such that for any $g\in W^1_q(\mathbb{R}_+^N)$ and any
$h\in W^2_q(\mathbb{R}_+^N)$
\begin{alignat*}2
K^1_0(\lambda) g
&=\widetilde{K}^1_0(\lambda)(\lambda^{1/2} g,\nabla g),
&\quad
K^2_0(\lambda) h
&=\widetilde{K}^2_0(\lambda)(\lambda h,\lambda^{1/2}\nabla h,
\nabla^2 h),\\
L^1_j(\lambda) g
&=\widetilde{L}^1_j(\lambda)(\lambda^{1/2} g,\nabla g),
&\quad
L^2_j(\lambda) h
&=\widetilde{L}^2_j(\lambda)(\lambda h,\lambda^{1/2}\nabla h,
\nabla^2 h)
\quad(j=1,2).
\end{alignat*}	
Furthermore, for $s=0,1$, $j=1,2$
\begin{align*}
\mathcal{R}_{\mathcal{L}(L_q(\mathbb{R}^N_+)^{N_j},
L_q(\mathbb{R}^N_+)^{N^3+N^2}\times W^1_q(\mathbb{R}^N_+))}
(\{(\tau\partial_\tau)^s 
(G_\lambda^2 \widetilde{K}^r_0(\lambda))
\mid \lambda\in\Sigma_{\varepsilon,\lambda_0} \})&\leq C,\\
\mathcal{R}_{\mathcal{L}(L_q(\mathbb{R}^N_+)^{N_j},
L_q(\mathbb{R}^N_+)^{N^3+N^2}\times W^1_q(\mathbb{R}^N_+))}
(\{(\tau\partial_\tau)^s 
(G_\lambda^2 \widetilde{L}^r_j(\lambda))
\mid \lambda\in\Sigma_{\varepsilon,\lambda_0} \})&\leq C,
\end{align*}
with a positive constant 
$C=C_{N,q,\varepsilon,\lambda_0,
\mu_{*+},\nu_{*+},\kappa_{*+}}$.		 
	
(2) Let $i,r=1,2$. For
$k_{r,2}(\lambda,\xi')\in \mathbb{M}_{r-2,2,\varepsilon,0}$,
we define operators $K_i^r(\lambda)$ by
\begin{align*}
[K_i^r(\lambda)f](x)
&=\mathcal{F}^{-1}_{\xi'}\Big[k_{r,2}(\lambda,\xi')A
M_{i+}(x_N)\widehat{f}(\xi',0) \Big](x'),
\end{align*}
for $\lambda\in \Sigma_{\varepsilon,0}$, and
$f\in W^r_q(\mathbb{R}^N_+)$.
Then, there exist operators $\widetilde{K}^r_0(\lambda)$,
$\widetilde{L}_j^r(\lambda)$, with	
\begin{align*}
\widetilde{K}^r_i(\lambda)
\in {\rm Anal}\,(\Sigma_{\varepsilon,0},
\mathcal{L}(L_q(\mathbb{R}_+)^{N_r},W^2_q(\mathbb{R}^N_+))),
\end{align*}
such that for any $g\in W^1_q(\mathbb{R}_+^N)$ and any
$h\in W^2_q(\mathbb{R}_+^N)$
\begin{alignat*}2
K^1_i(\lambda) g
&=\widetilde{K}^1_i(\lambda)(\lambda^{1/2} g,\nabla g),
&\quad
K^2_i(\lambda) h
&=\widetilde{K}^2_i(\lambda)(\lambda h,\lambda^{1/2}\nabla h,
\nabla^2 h).
\end{alignat*}	
Furthermore, for $s=0,1$, 
\begin{align*}
\mathcal{R}_{\mathcal{L}(L_q(\mathbb{R}^N_+)^{N_j},
L_q(\mathbb{R}^N_+)^{N^3+N^2+N})}
(\{(\tau\partial_\tau)^s 
(G_\lambda^1 \widetilde{K}^r_0(\lambda))
\mid \lambda\in\Sigma_{\varepsilon,\lambda_0} \})&\leq C,
\end{align*}
with a positive constant 
$C=C_{N,q,\varepsilon,\mu_{*+},\nu_{*+},\kappa_{*+}}$.
\end{corr}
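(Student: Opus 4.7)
The proof combines the Volevich trick with Lemma~\ref{lem.4.4}. The key identity is that for any of the kernels $M(z) \in \{M_{0+}(z), M_{i+}(z), e^{-t_j z}\}$ (all of which decay as $z \to \infty$ on $\Sigma_{\varepsilon,\lambda_0}$) and any $f$ vanishing at infinity,
$$
M(x_N)\widehat{f}(\xi',0) = -\int_0^\infty \partial_{y_N}\bigl[M(x_N+y_N)\widehat{f}(\xi',y_N)\bigr]\,dy_N.
$$
Expanding the derivative yields a sum of two volume integrals: one containing the original kernel $M(x_N+y_N)$ times $\widehat{\partial_N f}(\xi',y_N)$, and one containing the $z$-derivative $(\partial_z M)(x_N+y_N)$ times $\widehat{f}(\xi',y_N)$. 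The latter is handled by the algebraic recursions $\partial_z M_{0+}(z)=-t_1 M_{0+}(z)-e^{-t_2 z}$, $\partial_z M_{i+}(z)=-t_i M_{i+}(z)-e^{-B_+ z}$, and $\partial_z e^{-t_j z}=-t_j e^{-t_j z}$, which reproduce the same family of kernels.

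For part~(1), I would apply the Volevich identity to $K_0^r(\lambda) f$ with $M=M_{0+}$. The output decomposes into a linear combination of integrals
$$
\int_0^\infty\mathcal{F}_{\xi'}^{-1}\bigl[m(\lambda,\xi')\,A\,M_{0+}(x_N+y_N)\widehat{\mathfrak{g}}(\xi',y_N)\bigr](x')\,dy_N,\quad
\int_0^\infty\mathcal{F}_{\xi'}^{-1}\bigl[m(\lambda,\xi')\,A\,e^{-t_j(x_N+y_N)}\widehat{\mathfrak{g}}(\xi',y_N)\bigr](x')\,dy_N,
$$
where $\mathfrak{g}\in\{f,\partial_N f\}$ and $m$ is an inherited multiplier. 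To match the templates $J_i,K_i,L_j$ of Lemma~\ref{lem.4.4} (which carry prefactors $\lambda^{1/2}A$ or $A^2$ for $M_{i+}$ kernels and $A$ for $e^{-t_j\cdot}$ kernels, times an order-0 symbol), I would trade residual factors of $\lambda^{1/2}$, $A$, or $i\xi_j$ either onto the input (as $\lambda^{1/2}f$ or $\partial_j f$) or amongst themselves via $-A^2=\sum_{j=1}^{N-1}(i\xi_j)^2$. For $r=1$, one such trade matches the tuple $(\lambda^{1/2}f,\nabla f)$ and the orders $k_{1,1}\in\mathbb{M}_{-2,2,\varepsilon,0}$, $l_1\in\mathbb{M}_{-3,2,\varepsilon,0}$; for $r=2$, two trades match $(\lambda f,\lambda^{1/2}\nabla f,\nabla^2 f)$ and the one-higher multiplier orders. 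This defines $\widetilde{K}_0^r(\lambda)$ and $\widetilde{L}_j^r(\lambda)$, and analyticity on $\Sigma_{\varepsilon,\lambda_0}$ is inherited from that of $k_{r,1},l_r,t_j,B_\pm$.

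To upgrade the codomain from $L_q$ to $W^3_q$, I would apply $\partial_x^\alpha$ with $|\alpha|\leq 3$ to the operator. Each $\partial_{x_j}$ ($j<N$) produces a factor $i\xi_j$ in the symbol, and each $\partial_{x_N}$ differentiates the kernel $M_{0+}(x_N+y_N)$ or $e^{-t_j(x_N+y_N)}$, reproducing the family via the same recursions. Combined with the identities $B_+^2=A^2+\rho_{*+}\mu_{*+}^{-1}\lambda$ and $t_j^2=A^2+s_j\lambda$ from (\ref{identity}), the required bounds for $G_\lambda^2\widetilde{K}_0^r$ and $G_\lambda^2\widetilde{L}_j^r$ reduce to direct applications of Lemma~\ref{lem.4.4}, with the multiplier algebra of Lemma~\ref{lem:multiplier} and Remark~\ref{rema.multi} used to verify that every leftover symbol (involving $t_j/A,B_+/A,i\xi_j/A$, etc.) lies in $\mathbb{M}_{0,2,\varepsilon,0}$ or $\mathbb{M}_{0,1,\varepsilon,0}$. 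Part~(2) is handled identically with $M=M_{i+}$ ($i=1,2$) in place of $M_{0+}$; the codomain is now $W^2_q$, matching the one-higher multiplier order $k_{r,2}\in\mathbb{M}_{r-2,2,\varepsilon,0}$.

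The main obstacle is the combinatorial bookkeeping: after the recursive Volevich expansions and kernel differentiations, every resulting integrand must be rewritten in the exact form required by Lemma~\ref{lem.4.4} --- an order-0 multiplier times one of $\lambda^{1/2}A\,M_{i+}(x_N+y_N)$, $A^2\,M_{i+}(x_N+y_N)$, or $A\,e^{-t_j(x_N+y_N)}$ --- with the input packaged as one of the tuples $(\lambda^{1/2}f,\nabla f)$ or $(\lambda f,\lambda^{1/2}\nabla f,\nabla^2 f)$. This is bookkeeping rather than a novel analytic estimate, but it is essential for the $\mathcal{R}$-bound constants to be uniform in $\lambda\in\Sigma_{\varepsilon,\lambda_0}$; in particular, one must keep track of which factors belong to $\mathbb{M}_{s,1,\varepsilon,0}$ (such as $t_j,B_\pm$) versus $\mathbb{M}_{s,2,\varepsilon,0}$ (such as $i\xi_j,A$) when invoking Lemma~\ref{lem:multiplier} to collapse products of symbols.
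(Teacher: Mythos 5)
Your proposal follows exactly the same strategy as the paper's proof: Volevich's identity to convert the boundary trace into a volume integral, the algebraic recursions $\partial_z M_{0+}=-t_2M_{0+}-e^{-t_1 z}$ (equivalently, by symmetry, $-t_1M_{0+}-e^{-t_2z}$), $\partial_z M_{i+}=-t_iM_{i+}-e^{-B_+z}$, $\partial_z e^{-t_jz}=-t_je^{-t_jz}$, the identity $B_+^2=A^2+\rho_{*+}\mu_{*+}^{-1}\lambda$ to redistribute factors of $\lambda^{1/2}$ and $i\xi_m$ onto the input so it matches the tuples $(\lambda^{1/2}g,\nabla g)$ or $(\lambda h,\lambda^{1/2}\nabla h,\nabla^2 h)$, and finally Lemma~\ref{lem.4.4} together with the multiplier algebra of Lemma~\ref{lem:multiplier} and \eqref{505} to obtain the $\mathcal{R}$-bounds after applying the derivatives in $G_\lambda^1$ or $G_\lambda^2$. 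This matches the paper's argument (which spells out the decomposition into eight terms for $K_0^1$ and states that the remaining cases are analogous), so the proposal is correct and essentially the same.
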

\begin{proof}
We only prove the case $K_0^1(\lambda)$.
The proof of other cases are similar to $K_0^1(\lambda)$,
so that we may omit the detailed proof 
(cf. Saito \cite[Corrolary 4.9]{Saito} and Shibata and Shimizu 
\cite[Lemma 5.6, Lemma 5.7]{SS2012}). 	
	
By the definition of $M_{0+}(x_N)$, we have
\begin{align}\label{M0}
\partial_N M_{0+}(x_N)=-t_2M_{0+}(x_N)-e^{-t_1x_N},
\end{align}
which, combined with (\ref{identity}), implies that
\begin{align*}
&K_0^1(\lambda) g\\
&=\int_{0}^{\infty}\mathcal{F}^{-1}_{\xi'}\Big[ 
\frac{\rho_{*+}\lambda^{1/2}t_2k_{1,1}(\lambda,\xi')}
{\mu_{*+}B_+^2}AM_{0+}(x_N+y_N)
\widehat{\lambda^{1/2}g}(\lambda,\xi')
\Big](x')\, dy_N\\
&+\sum_{m=1}^{N-1}\int_{0}^{\infty}\mathcal{F}^{-1}_{\xi'}\Big[ 
\frac{i\xi_mt_2k_{r,1}(\lambda,\xi')}{B_+^2}AM_{0+}(x_N+y_N)
\widehat{\partial_m g}(\xi',y_N)
\Big](x')\, dy_N\\
&+\int_{0}^{\infty}\mathcal{F}^{-1}_{\xi'}\Big[ 
\frac{\rho_{*+}\lambda^{1/2}k_{1,1}(\lambda,\xi')}
{\mu_{*+}B_+^2}Ae^{-t_1(x_N+y_N)}\widehat{\lambda^{1/2}g}
(\xi',y_N)\Big](x')\, dy_N\\
&-\sum_{m=1}^{N-1}\int_{0}^{\infty}\mathcal{F}^{-1}_{\xi'}\Big[ 
\frac{i\xi_mk_{1,1}(\lambda,\xi')}{B_+^2}Ae^{-t_1(x_N+y_N)}
\widehat{\partial_mg}(\xi',y_N)\Big](x')\, dy_N\\
&-\int_{0}^{\infty}\mathcal{F}^{-1}_{\xi'}\Big[ 
k_{1,1}(\lambda,\xi')AM_{0+}(x_N+y_N)\widehat{\partial_N g}
(\xi',y_N)\Big](x')\, dy_N\\
&=
\int_{0}^{\infty}\mathcal{F}^{-1}_{\xi'}\Big[ 
\frac{\rho_{*+}^2\lambda t_2k_{1,1}(\lambda,\xi')}
{\mu_{*+}^2 B_+^4}\lambda^{1/2}AM_{0+}(x_N+y_N)
\widehat{\lambda^{1/2}g}(\lambda,\xi')
\Big](x')\, dy_N\\
&+\int_{0}^{\infty}\mathcal{F}^{-1}_{\xi'}\Big[ 
\frac{\rho_{*+}\lambda^{1/2}t_2Ak_{1,1}(\lambda,\xi')}
{\mu_{*+}B_+^4}A^2M_{0+}(x_N+y_N)
\widehat{\lambda^{1/2}g}(\lambda,\xi')
\Big](x')\, dy_N\\
&+\sum_{m=1}^{N-1}\int_{0}^{\infty}\mathcal{F}^{-1}_{\xi'}\Big[ 
\frac{\rho_{*+} i\xi_m \lambda^{1/2}
t_2 k_{r,1}(\lambda,\xi')}{\mu_{*+} B_+^4}
\lambda^{1/2}AM_{0+}(x_N+y_N)\widehat{\partial_m g}(\xi',y_N)
\Big](x')\, dy_N\\ 
&+\sum_{m=1}^{N-1}\int_{0}^{\infty}\mathcal{F}^{-1}_{\xi'}\Big[ 
\frac{i\xi_m t_2A k_{r,1}(\lambda,\xi')}{B_+^4}
A^2M_{0+}(x_N+y_N)\widehat{\partial_m g}(\xi',y_N)
\Big](x')\, dy_N\\ 
&+\int_{0}^{\infty}\mathcal{F}^{-1}_{\xi'}\Big[ 
\frac{\rho_{*+}\lambda^{1/2}k_{1,1}(\lambda,\xi')}
{\mu_{*+}B_+^2}Ae^{-t_1(x_N+y_N)}\widehat{\lambda^{1/2}g}
(\xi',y_N)\Big](x')\, dy_N\\
&-\sum_{m=1}^{N-1}\int_{0}^{\infty}\mathcal{F}^{-1}_{\xi'}\Big[ 
\frac{i\xi_mk_{1,1}(\lambda,\xi')}{B_+^2}Ae^{-t_1(x_N+y_N)}
\widehat{\partial_mg}(\xi',y_N)\Big](x')\, dy_N\\
&-\int_{0}^{\infty}\mathcal{F}^{-1}_{\xi'}\Big[ 
\frac{\rho_{*+}\lambda^{1/2} k_{1,1}(\lambda,\xi')}
{\mu_{*+}B_+^2}
\lambda^{1/2}AM_{0+}(x_N+y_N)\widehat{\partial_N g}
(\xi',y_N)\Big](x')\, dy_N\\
&-\int_{0}^{\infty}\mathcal{F}^{-1}_{\xi'}\Big[ 
\frac{Ak_{1,1}(\lambda,\xi')}{B_+^2}
A^2M_{0+}(x_N+y_N)\widehat{\partial_N g}
(\xi',y_N)\Big](x')\, dy_N\\
&=:\sum_{l=1}^{8}\widetilde{K}^1_{0,l}(\lambda)
(\lambda^{1/2}g,\nabla g).
\end{align*}	
Here, we use Volevich's formula \cite{Volevich}:
\begin{align*}
&f(\xi',x_N)\widehat{g}(0)\\
&\quad=-\int_{0}^{\infty}\{(\partial_N f)(\xi',x_N+y_N)
\widehat{g}(y_N)+
f(\xi',x_N+y_N)\widehat{\partial_N g}(\xi',y_N) \}
\,dy_N
\end{align*}	
with $f(\xi',x_N+y_N)\widehat{g}(y_N)\to 0$ as $y_N\to \infty$.
	
First, we estimate $\widetilde{K}^1_{0,n_0}(\lambda)$
($n_0=1,2,3,4,7,8$).
Here, we consider $\widetilde{K}^1_{0,8}(\lambda)$
only, because we can treat other terms similarly.
Let $j,k,l=1,\dots,N-1$.
By (\ref{M0}), $\widetilde{K}^1_{0,8}(\lambda)$ can 
be written as	
\begin{align*}
&\lambda\widetilde{K}^1_{0,8}(\lambda)(\lambda^{1/2}g,\nabla g)\\
&=-\int_{0}^{\infty}\mathcal{F}^{-1}_{\xi'}\Big[
\frac{\lambda Ak_{1,1}(\lambda,\xi')}{B_+^2} A^2M_{0+}(x_N+y_N)
\widehat{\partial_N g}(\xi',y_N)
\Big](x')\,dy_N,\\
&(\lambda\partial_j,\lambda^{1/2}\partial_j\partial_k,
\partial_j\partial_k\partial_l)
\widetilde{K}^1_{0,8}(\lambda)(\lambda^{1/2}g,\nabla g)\\
&=-\int_{0}^{\infty}\mathcal{F}^{-1}_{\xi'}\Big[
\frac{(\lambda i\xi_j,-\lambda^{1/2}
\xi_j\xi_k,-i\xi_j\xi_k\xi_l)Ak_{1,1}(\lambda,\xi')}{B_+^2}\\
&\qquad\times A^2M_{0+}(x_N+y_N)\widehat{\partial_N g}
(\xi',y_N)\Big](x')\,dy_N,\\
&(\lambda,\lambda^{1/2}\partial_j,\partial_j\partial_k)
\partial_N\widetilde{K}^1_{0,8}
(\lambda)(\lambda^{1/2}g,\nabla g)\\
&=\int_{0}^{\infty}\mathcal{F}^{-1}_{\xi'}\Big[
\frac{(\lambda,\lambda^{1/2}i\xi_j,-\xi_j\xi_k)
t_2Ak_{1,1}(\lambda,\xi')}{B_+^2}
A^2 M_{0+}(x_N+y_N)\widehat{\partial_N g}(\xi',y_N)
\Big](x')\,dy_N\\
&+\int_{0}^{\infty}\mathcal{F}^{-1}_{\xi'}\Big[
\frac{(\lambda,\lambda^{1/2}i\xi_j,-\xi_j\xi_k)
A^2k_{1,1}(\lambda,\xi')}{B_+^2}
Ae^{-t_1(x_N+y_N)}\widehat{\partial_N g}(\xi',y_N)
\Big](x')\,dy_N,\\
&(\lambda^{1/2},\partial_j)\partial_N^2
\widetilde{K}^1_{0,8}(\lambda)(\lambda^{1/2}g,\nabla g)\\
&=-\int_{0}^{\infty}\mathcal{F}^{-1}_{\xi'}\Big[
\frac{(\lambda^{1/2},i\xi_j)t_2^2Ak_{1,1}(\lambda,\xi')}{B_+^2}
A^2M_{0+}(x_N+y_N)\widehat{\partial_N g}(\xi',y_N)
\Big](x')\,dy_N\\
&-\int_{0}^{\infty}\mathcal{F}^{-1}_{\xi'}\Big[
\frac{(\lambda^{1/2},i\xi_j)(t_1+t_2)A^2k_{1,1}(\lambda,\xi')}
{B_+^2} Ae^{-t_1(x_N+y_N)}\widehat{\partial_N g}(\xi',y_N)
\Big](x')\,dy_N,\\
&\partial^3_N\widetilde{K}^1_{0,8}
(\lambda)(\lambda^{1/2}g,\nabla g)\\
&=\int_{0}^{\infty}\mathcal{F}^{-1}_{\xi'}\Big[
\frac{t^3_2 Ak_{1,1}(\lambda,\xi')}{B_+^2}A^2M_{0+}(x_N+y_N)
\widehat{\partial_N g}(\xi',y_N)\Big](x')\,dy_N\\
&+\int_{0}^{\infty}\mathcal{F}^{-1}_{\xi'}\Big[
\frac{(t_1^2+t_1t_2+t_2^2)A^2k_{1,1}(\lambda,\xi')}{B_+^2}
Ae^{-t_1(x_N+y_N)}
\widehat{\partial_N g}(\xi',y_N)\Big](x')\,dy_N.
\end{align*}	
Then, by Lemma \ref{lem:multiplier}, the assumption for
$k_{1,1}(\lambda,\xi')$, and (\ref{505}) below,
we have	
\begin{align*}
\frac{\lambda Ak_{1,1}(\lambda,\xi')}{B_+^2}
\in \mathbb{M}_{-1,2,\varepsilon,0}
&\subset \mathbb{M}_{0,2,\varepsilon,\lambda_0},\\
\frac{(\lambda i\xi_j,-\lambda^{1/2}
\xi_j\xi_k,-i\xi_j\xi_k\xi_l)Ak_{1,1}(\lambda,\xi')}{B_+^2}
&\in \mathbb{M}_{0,2,\varepsilon,0},\\
\frac{(\lambda,\lambda^{1/2}i\xi_j,-\xi_j\xi_k)
t_2Ak_{1,1}(\lambda,\xi')}{B_+^2}
&\in \mathbb{M}_{0,2,\varepsilon,0},\\
\frac{(\lambda,\lambda^{1/2}i\xi_j,-\xi_j\xi_k)
A^2k_{1,1}(\lambda,\xi')}{B_+^2}
&\in \mathbb{M}_{0,2,\varepsilon,0},\\
\frac{(\lambda^{1/2},i\xi_j)t_2^2Ak_{1,1}(\lambda,\xi')}{B_+^2}
&\in \mathbb{M}_{0,2,\varepsilon,0},\\
\frac{(\lambda^{1/2},i\xi_j)(t_1+t_2)A^2k_{1,1}(\lambda,\xi')}
{B_+^2}
&\in \mathbb{M}_{0,2,\varepsilon,0},\\
\frac{t^3_2 Ak_{1,1}(\lambda,\xi')}{B_+^2}
&\in \mathbb{M}_{0,2,\varepsilon,0},\\
\frac{(t_1^2+t_1t_2+t_2^2)A^2k_{1,1}(\lambda,\xi')}{B_+^2}
&\in \mathbb{M}_{0,2,\varepsilon,0}.
\end{align*}	
Combining these properties with Lemma \ref{lem.4.4} 
furnishes for $s=0,1$
\begin{align}\label{462}
\mathcal{R}_{\mathcal{L}(L_q(\mathbb{R}^N_+)^{N_1},
L_q(\mathbb{R}^N_+)^{N^3+N^2}\times W^1_q(\mathbb{R}^N_+))}
(\{(\tau\partial_\tau)^s 
(G_\lambda^2 \widetilde{K}^1_{0,8}(\lambda))
\mid \lambda\in\Sigma_{\varepsilon,\lambda_0} \})&\leq C,
\end{align}
with some positive constant 
$C=C_{N,q,\varepsilon,\lambda_0,\mu_{*+},\nu_{*+},\kappa_{*+}}$.
Analogously, we have
\begin{align}\label{463}
\mathcal{R}_{\mathcal{L}(L_q(\mathbb{R}^N_+)^{N_1},
L_q(\mathbb{R}^N_+)^{N^3+N^2}\times W^1_q(\mathbb{R}^N_+))}
(\{(\tau\partial_\tau)^s 
(G_\lambda^2 \widetilde{K}^1_{0,n_1}(\lambda))
\mid \lambda\in\Sigma_{\varepsilon,\lambda_0} \})&\leq C
\end{align}
for $n_1=1,2,3,4,7$.

Next, we estimate $\widetilde{K}^1_{0,n_2}(\lambda)$ ($n_2=5,6$).
By Lemma \ref{lem:multiplier}, the assumption for
$k_{1,1}(\lambda,\xi')$, and (\ref{505}) below, we have for
$m=1,\dots,N-1$
\begin{align*}
\frac{\rho_{*+}\lambda^{1/2} k_{1,1}(\lambda,\xi')}
{\mu_{*+}B_+^2},\enskip
\frac{i\xi_mk_{1,1}(\lambda,\xi')}{B_+^2}
\in \mathbb{M}_{-3,2,\varepsilon,0}.
\end{align*}
Accordingly, if we employ the same argument as 
in proving (\ref{462}), we obtain	
\begin{align*}
\mathcal{R}_{\mathcal{L}(L_q(\mathbb{R}^N_+)^{N_1},
L_q(\mathbb{R}^N_+)^{N^3+N^2}\times W^1_q(\mathbb{R}^N_+))}
(\{(\tau\partial_\tau)^s 
(G_\lambda^2 \widetilde{K}^1_{0,n_2}(\lambda))
\mid \lambda\in\Sigma_{\varepsilon,\lambda_0} \})&\leq C
\end{align*}
for $s=0,1$ and $n_2=3,4$ with 
$C=C_{N,q,\varepsilon,\lambda_0,\mu_{*+},\nu_{*+},\kappa_{*+}}$,
which, combined with (\ref{462}) and (\ref{463}),
complete the proof. 
\end{proof}	
 
Employing the argument in Shibata
\cite[Sect. 4]{Shibata2016}, 
by (\ref{4.428}) and (\ref{4.429}), there exist
operator families $\widetilde{\mathcal{A}}^{3+}_{Ji}(\lambda)$
$(i=1,2,3,4)$,	 $\widetilde{\mathcal{A}}^{3-}_{Jl}(\lambda)$
$(l=1,2,3)$, and $\widetilde{\mathcal{P}}^3_-(\lambda)$  
such that	 
\begin{align*}
\mathcal{A}^{3+}_{Ji}(\lambda)\mathbf{h}=
\widetilde{\mathcal{A}}^{3+}_{Ji}(\lambda)
\widetilde{\mathbf{H}},
\quad
\mathcal{A}^{3-}_{Jl}(\lambda)\mathbf{h}=
\widetilde{\mathcal{A}}^{3-}_{Jl}(\lambda)
\widetilde{\mathbf{H}},
\quad
\mathcal{P}^{3}_-(\lambda)\mathbf{h}=
\widetilde{\mathcal{P}}^{3}_-(\lambda)
\widetilde{\mathbf{H}},
\end{align*}
respectively.
Using (\ref{4.428}), (\ref{4.429}), and 
Corollary \ref{cor.operator}, there exists operator family
$\widetilde{\mathcal{B}}^{3}_+(\lambda)$ such that	 
\begin{align*}
\mathcal{B}^3_{+}(\lambda)\mathbf{h}=
\widetilde{\mathcal{B}}^{3}_+(\lambda)
\widetilde{\mathbf{H}}.
\end{align*}	 
Consequently, if we define operators
$\mathcal{A}^2_\pm(\lambda)$, $\mathcal{B}_+^2(\lambda)$, 
and $\mathcal{P}_-^2(\lambda)$ by
\begin{align*}
\mathcal{A}^2_+(\lambda)\widetilde{\mathbf{H}}&=
\sum_{i=1}^{4}(\mathcal{A}^{3+}_{1i}(\lambda)
\widetilde{\mathbf{H}},
\dots,\mathcal{A}^{3+}_{Ni}(\lambda)\widetilde{\mathbf{H}} ),\\
\mathcal{A}^2_-(\lambda)\widetilde{\mathbf{H}}&=
\sum_{l=1}^{3}(\mathcal{A}^{3-}_{1l}(\lambda)
\widetilde{\mathbf{H}},
\dots,\mathcal{A}^{3-}_{Nl}(\lambda)\widetilde{\mathbf{H}} ),\\
\mathcal{B} ^{2}_+(\lambda)
\widetilde{\mathbf{H}}&=
\widetilde{\mathcal{B}}^{3}_-(\lambda)
\widetilde{\mathbf{H}},\\
\mathcal{P} ^{2}_-(\lambda)
\widetilde{\mathbf{H}}&=
\widetilde{\mathcal{P}}^{3}_-(\lambda)
\widetilde{\mathbf{H}},
\end{align*}
respectively, from (\ref{4.428}) we have
\begin{align*}
\mathbf{u}_\pm
&=\mathcal{A}_\pm^2(\lambda)(\lambda \mathbf{h},
\lambda^{1/2}\nabla\mathbf{h},\nabla^2\mathbf{h}),\\
\rho_+
&=\mathcal{B}_+^2(\lambda)(\lambda \mathbf{h},
\lambda^{1/2}\nabla\mathbf{h},\nabla^2\mathbf{h}),\\
\pi_-
&=\mathcal{P}_-^2(\lambda)(\lambda \mathbf{h},
\lambda^{1/2}\nabla\mathbf{h},\nabla^2\mathbf{h}).
\end{align*}	 
Combining Corollary \ref{cor.operator} and the argument 
in Shibata \cite[Sect. 4]{Shibata2016},
we have (\ref{operator.estimate1}), so that we have completed
the proof of Theorem \ref{Th05}.	 
	 



\section{Analysis of multipliers}\label{Sect.multipliers}

In this section, we estimate several multipliers.
To this end, we start with the following wildly known
estimate:
\begin{align}\label{501}
|\alpha \lambda+\beta|
\geq\Big(\sin\frac{\varepsilon}{2}\Big)(\alpha|\lambda|+\beta)
\end{align}	
for any $\lambda\in \Sigma_{\varepsilon}$ and positive numbers
$\alpha$ and $\beta$.	
	
First, we estimate $B_\pm^s$ and
$(\mu_{*+}B_++\mu_{*-}B_-)$.
For this purpose, we use the estimates:
\begin{align}\label{502}
c_1(|\lambda|^{1/2}+A)
\leq{\rm Re}\, B_\pm\leq |B_\pm|\leq c_2(|\lambda|^{1/2}+A)
\end{align}	
for any $(\lambda,\xi)\in \widetilde{\Sigma}_{\varepsilon,0}$
with some positive constant $c_1$ and $c_2$, 
which immediately follows from (\ref{501}).
Here and in the sequel, $c_1$ and $c_2$ denote some positive
constants essentially depending on
$\varepsilon$, $\mu_{*\pm}$, $\nu_{*+}$,
$\kappa_{*+}$, and $\rho_{*\pm}$.
In particular, by (\ref{502}) we have
\begin{align}\label{503}
c_1(|\lambda|^{1/2}+A)
&\leq{\rm Re}\, (\mu_{*+}B_++\mu_{*-}B_-)\\
&\leq |(\mu_{*+}B_++\mu_{*-}B_-)|\leq c_2(|\lambda|^{1/2}+A)
\nonumber
\end{align}	
for any $(\lambda,\xi)\in \widetilde{\Sigma}_{\varepsilon,0}$.
As shown in Enomoto and Shibata \cite[Lemma 4.3]{ES2013},
using (\ref{502}), (\ref{503}) and the Bell's formula:
\begin{align}\label{Bell} 
\partial_{\xi'}^{\alpha'}f(g(\xi'))
=\sum_{l=1}^{|\alpha'|}f^{(l)}(g(\xi'))
\sum_{\substack{\alpha_1+\cdots+\alpha_l=\alpha'\\
|\alpha_i|\geq1}}
\Gamma^{\alpha'}_{\alpha_1,\dots,\alpha_l}
(\partial_{\xi'}^{\alpha_1}g(\xi'))\cdots
(\partial_{\xi'}^{\alpha_l}g(\xi'))
\end{align}
with suitable coefficients 
$\Gamma^{\alpha'}_{\alpha_1,\dots,\alpha_l}$,
where $f^{(l)}(t)=d^lf(t)/dt^l$, we see that
\begin{align}\label{504}
(M_1)^s\in \mathbb{M}_{s,1,\varepsilon,0}\quad
(M_1=B_\pm, \mu_{*+}B_++\mu_{*-}B_-).
\end{align}	
	
Second, we estimate 
$(t_i)^s,\enskip t_i+B_+$, and $t_iB_++A^2 \enskip(i=1,2)$.
As seen in Saito \cite{Saito},
we have the following lemma.	
\begin{lemm}\label{lem:t}
Let $i=1,2$.
Then, the roots $s_i$ of (\ref{eq:P}) are given by	
\begin{align*}
s_1=
\begin{cases}
\cfrac{\mu_{*+}+\nu_{*+}}{2\kappa_{*+}}
+\sqrt{\eta_*}
& (\eta_*>0),\\
\cfrac{\mu_{*+}+\nu_{*+}}{2\kappa_{*+}}
+i\sqrt{|\eta_*|}
& (\eta_*<0),
\end{cases}\qquad
s_2=
\begin{cases}
\cfrac{\mu_{*+}+\nu_{*+}}{2\kappa_{*+}}
-\sqrt{\eta_*}
& (\eta_*>0),\\
\cfrac{\mu_{*+}+\nu_{*+}}{2\kappa_{*+}}
-i\sqrt{|\eta_*|}
& (\eta_*<0)
\end{cases}
\end{align*}
with
\begin{align*}
\eta_*
=\Big(\frac{\mu_{*+}+\nu_{*+}}{2\kappa_{*+}}\Big)^2
-\frac{1}{\kappa_{*+}}.
\end{align*}
In addition, there exist positive constants $c_1$ and $c_2$ 
such that
\begin{align*}
c_1(|\lambda|^{1/2}+A)
\leq{\rm Re}\, t_i\leq |t_i|\leq c_2(|\lambda|^{1/2}+A)
\quad(i=1,2)
\end{align*}
for any 
$(\lambda,\xi')\in \widetilde{\Sigma}_{\varepsilon,0}$.	
\end{lemm}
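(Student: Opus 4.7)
The plan is to start with the explicit formulas for $s_1,s_2$, which come directly from the quadratic formula applied to \eqref{eq:P}: the discriminant is $\bigl(\tfrac{\mu_{*+}+\nu_{*+}}{\kappa_{*+}}\bigr)^{2} - \tfrac{4}{\kappa_{*+}} = 4\eta_*$, so the two roots are $\tfrac{\mu_{*+}+\nu_{*+}}{2\kappa_{*+}} \pm \sqrt{\eta_*}$ when $\eta_*>0$ and $\tfrac{\mu_{*+}+\nu_{*+}}{2\kappa_{*+}} \pm i\sqrt{|\eta_*|}$ when $\eta_*<0$, matching the stated expressions. The hypothesis $\eta_*\neq 0$ rules out the degenerate case where the two roots would coincide.

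For the bounds on $t_i=\sqrt{A^2+s_i\lambda}$, the strategy is to confine $A^2+s_i\lambda$ to a closed sector $\{z\in\mathbb{C}:|\arg z|\le \pi-\delta\}$ for some $\delta>0$ depending only on $\varepsilon$ and the material constants, and then invoke the elementary half-angle property that the principal square root maps this sector into one of opening $(\pi-\delta)/2<\pi/2$, so that $\mathrm{Re}\,\sqrt{z}\ge \sin(\delta/2)|\sqrt{z}|$. When $\eta_*>0$, Vieta's formulas give $s_1s_2=1/\kappa_{*+}>0$ and $s_1+s_2=(\mu_{*+}+\nu_{*+})/\kappa_{*+}>0$, so both $s_i$ are positive reals, $\arg(s_i\lambda)=\arg\lambda\in[-(\pi-\varepsilon),\pi-\varepsilon]$, and adding the non-negative quantity $A^2$ preserves the sector $\Sigma_\varepsilon$. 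When $\eta_*<0$, I would write $s_i=a\pm i\sqrt{|\eta_*|}$ with $a=(\mu_{*+}+\nu_{*+})/(2\kappa_{*+})>0$, observe from the constant coefficient of \eqref{eq:P} that $|s_i|^2=1/\kappa_{*+}$, and set $\phi:=|\arg s_i|=\arctan(\sqrt{|\eta_*|}/a)\in(0,\pi/2)$. The angle $\varepsilon_*$ from Lemma \ref{lem601} is designed precisely so that $\phi\le\varepsilon_*$, so the assumption $\varepsilon>\varepsilon_*$ yields $|\arg(s_i\lambda)|\le \phi+(\pi-\varepsilon)<\pi$, and again adding $A^2\ge 0$ keeps the sum in a sector bounded away from the negative real axis.

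Once the confinement is secured, \eqref{501} yields $|A^2+s_i\lambda|\gtrsim A^2+|s_i||\lambda|\gtrsim A^2+|\lambda|$, while the triangle inequality supplies a matching upper bound $|A^2+s_i\lambda|\le A^2+|s_i||\lambda|\lesssim (A+|\lambda|^{1/2})^2$. Taking principal square roots then produces $|t_i|\sim A+|\lambda|^{1/2}$, and the half-angle estimate promotes this to the lower bound $\mathrm{Re}\,t_i\gtrsim A+|\lambda|^{1/2}$, yielding the claimed two-sided inequality with constants $c_1,c_2$ depending only on $\varepsilon$, $\mu_{*+}$, $\nu_{*+}$, $\kappa_{*+}$, and $\rho_{*+}$. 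The main obstacle is the case $\eta_*<0$: one must verify that $\phi\le\varepsilon_*$ with the precise $\varepsilon_*$ selected in Lemma \ref{lem601}, because if $\varepsilon$ were permitted to be smaller than $\phi$ then $s_i\lambda$ could cross the negative real axis and the principal branch of the square root would fail to enjoy the required real-part control. Once Lemma \ref{lem601} is in hand, the remainder is routine sector geometry.
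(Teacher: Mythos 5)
The paper does not actually prove this lemma: the text introduces it with ``As seen in Saito \cite{Saito}, we have the following lemma,'' so there is no in-paper proof to compare against. Taken as a reconstruction, your argument is correct and uses the standard approach one would expect. The quadratic-formula derivation of $s_1,s_2$ (with discriminant $4\eta_*$), the observation via Vieta that both roots are positive reals when $\eta_*>0$, and the identity $|s_i|^2=1/\kappa_{*+}$ when $\eta_*<0$ are all right; and once $A^2+s_i\lambda$ is confined to a closed sector of opening strictly less than $\pi$, the half-angle property of the principal square root together with the sector inequality \eqref{501} and the triangle inequality yields the two-sided bound on $\mathrm{Re}\,t_i$ and $|t_i|$ in terms of $|\lambda|^{1/2}+A$. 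You have also correctly located the load-bearing subtlety: when $\eta_*<0$, the argument needs $\varepsilon$ strictly larger than $\phi=|\arg s_i|$, which is where the hypothesis $\varepsilon>\varepsilon_*$ enters.

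Two small cautions. First, the lemma's statement in this paper only says ``for any $(\lambda,\xi')\in\widetilde{\Sigma}_{\varepsilon,0}$'' and does not repeat the restriction $\varepsilon_*<\varepsilon<\pi/2$; you are right to read it in from the surrounding section, but note it is implicit rather than explicit here. Second, your sentence that $\varepsilon_*$ from Lemma \ref{lem601} ``is designed precisely so that $\phi\le\varepsilon_*$'' is an inference that cannot really be checked from this paper, since Lemma \ref{lem601} produces $\varepsilon_*$ via a compactness argument for the Lopatinski determinant whose proof itself relies on the $t_i$ bounds you are establishing; the paper defers to Saito \cite[Remark 3.3]{Saito} for the precise definition of $\varepsilon_*$. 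Your reading is almost certainly what Saito intends, but it is inherited from the citation rather than derivable from this paper's internal logic.
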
	
\begin{rema}\label{eta.condition}
We have in general the following situations concerning
roots with positive real parts for the characteristic
equation of (\ref{413}):

(1) When $\eta_*<0$, it holds that $B_+\neq t_1$,
$B_+\neq t_2$, and $t_1\neq t_2$.

(2) When $\eta_*=0$, there are two cases:
$B_+\neq t_1$ and $t_1=t_2$; $B_+=t_1=t_2$.

(3) When $\eta_*>0$, there are three cases:
$B_+\neq t_1$, $B_+\neq t_2$, and $t_1\neq t_2$;
$B_+=t_1$ and $t_1\neq t_2$;
$B_+=t_2$ and $t_1\neq t_2$.

We assume $\eta_*\neq0$ and $\kappa_{*+}\neq\nu_{*+}\mu_{*+}$.
Under these assumptions, we have the three roots 
with positive real parts different from each other.
We consider, however, that our technique in this paper can 
be applied to the case of equal roots.	
\end{rema}	
From the Bell's formula and Lemma \ref{lem:t}, for $i=1,2$,
we have
\begin{align}
\label{505}	
(M_2)^s&\in\mathbb{M}_{s,1,\varepsilon,0}
\qquad(M_2=t_i,\enskip t_i+B_+), \quad
\mathfrak{n}(\lambda,\xi')\in
\mathbb{M}_{4,1,\varepsilon,0}.
\end{align}	
Then, by (\ref{442}), (\ref{det:L}), (\ref{446}), (\ref{448}),
and Lemma \ref{lem:t},
we have
\begin{align*}
Q_{Nm}^+
=&-\frac{2i\xi_mt_1t_2G\rho_{*+}(t_2^2-A^2)
(t_1^2-B_+^2)(t_1+t_2)}
{A\rho_{*+}\lambda(t_1^2-t_2^2)\mathfrak{l}(\lambda,\xi')
(t_1+B_+)}\\
=&-\frac{2(s_1-\rho_{*+}\mu_{*+}^{-1})s_2i\xi_mt_1t_2
(t_1+t_2)G}
{(s_1-s_2)A(t_1+B_+)\mathfrak{l}(\lambda,\xi')}
\end{align*}
with $s_1-\rho_{*+}\mu_{*+}^{-1}\neq 0$.
Hence, by Remark \ref{rema.multi}, (\ref{504}), 
Lemma \ref{lem:t}, (\ref{505}), and Lemma \ref{lem601} bellow,
we have $Q_{Nm}^+\in \mathbb{M}_{0,2,\varepsilon,0}$.
Analogously, we have 
$Q_{jm}^+\in \mathbb{M}_{0,2,\varepsilon,0}$
($j=1,\dots,N-1$), which yields
$Q_{Jm}^+\in \mathbb{M}_{0,2,\varepsilon,0}$
($J=1,\dots,N$).
Furthermore, employing the same argument as $Q_{Jm}^+$,
we have other assertions in (\ref{4.429}).
Then, we finish the estimate of multipliers.	




\section{Analysis of Lopatinski determinant}
\label{Sect.Lopatinski}
\begin{lemm}\label{lem601}
Let $\varepsilon_*<\varepsilon<\pi/2$ and 
$\mathfrak{l}(\lambda,\xi')$ be defined in (\ref{det:L}).
Assume that $\rho_{*+}\neq\rho_{*-}$, $\eta_*\neq 0$,
and $\kappa_{*+}\neq \mu_{*+}\nu_{*+}$.
Then, there exists a positive constant $C$ such that
\begin{align}\label{602}
|\mathfrak{l}(\lambda,\xi')|\geq C(|\lambda|^{1/2}+A)^6
\end{align}	
for any 
$(\lambda,\xi')\in \widetilde{\Sigma}_{\varepsilon,0}$.
Here, positive constant $C_{\alpha'}$ is depending on
$\alpha'$, $\varepsilon$, $\mu_{*\pm}$, $\nu_{*+}$,
$\kappa_{*+}$, and $\rho_{*\pm}$.

In addition, we have
\begin{align}\label{603}
|\partial_{\xi'}^{\alpha'}
\{(\tau\partial_\tau)^s(\mathfrak{l}(\lambda,\xi'))^{-1} \}|
\leq C(|\lambda|^{1/2}+A)^{-6}A^{-|\alpha'|}\qquad(s=0,1)
\end{align}
for any multi-index $\alpha'\in \mathbb{N}^{N-1}_0$ and 
$(\lambda,\xi')\in \widetilde{\Sigma}_{\varepsilon,0}$,
that is, $\mathfrak{l}(\lambda,\xi)
\in \mathbb{M}_{-6,2,\varepsilon,0}$.			
\end{lemm}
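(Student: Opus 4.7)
The plan is to establish the upper bound on $\mathfrak{l}$ and its derivatives, the lower bound (\ref{602}), and the derivative estimates (\ref{603}) together, with (\ref{602}) as the crux of the argument. First, I would note that $\mathfrak{l}(\lambda,\xi')$ as defined in (\ref{det:L}) is a polynomial in $A$, $B_\pm$, $t_1$, $t_2$, and $t_1+t_2$ of total symbol order $6$ (using that $\mathfrak{n}(\lambda,\xi') \in \mathbb{M}_{4,1,\varepsilon,0}$ by (\ref{505})). Since each of the building blocks lies in $\mathbb{M}_{1,1,\varepsilon,0}$ by (\ref{504})--(\ref{505}), Lemma \ref{lem:multiplier} yields directly that $\mathfrak{l} \in \mathbb{M}_{6,1,\varepsilon,0}$. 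In particular, $|\mathfrak{l}| \le C(|\lambda|^{1/2}+A)^6$ and all the derivative estimates of numerator type hold. Once (\ref{602}) is proved, (\ref{603}) then follows by applying Bell's formula (\ref{Bell}) to $f(t)=1/t$ with $g(\xi')=\mathfrak{l}(\lambda,\xi')$, combining the upper derivative bounds on $\mathfrak{l}$ with the lower bound on $|\mathfrak{l}|$.

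Next, to reduce (\ref{602}) to a compact-set statement I would use the parabolic homogeneity $(\lambda,\xi')\mapsto(c^2\lambda,c\xi')$, under which $A, B_\pm, t_1, t_2$ are each positively $1$-homogeneous, so $\mathfrak{l}$ is $6$-homogeneous. Setting $c=(|\lambda|^{1/2}+A)^{-1}$ maps $(\lambda,\xi')$ to a point of the compact set
\[
K_\varepsilon=\{(\lambda,\xi')\in\widetilde{\Sigma}_{\varepsilon,0}\ :\ |\lambda|^{1/2}+A=1\},
\]
and (\ref{602}) is equivalent to $\inf_{K_\varepsilon}|\mathfrak{l}|>0$. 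Since $\mathfrak{l}$ is continuous on $K_\varepsilon$, this in turn reduces to showing $\mathfrak{l}$ does not vanish on $K_\varepsilon$.

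The step where the angle $\varepsilon_*$ enters is proving $\mathfrak{l}(\lambda,\xi')\ne 0$ on $\widetilde{\Sigma}_{\varepsilon,0}$ for $\varepsilon$ close enough to $\pi/2$. I would begin with the real case $\lambda>0$, in which $A, B_\pm$ and (by Lemma \ref{lem:t}) $t_1,t_2$ are real or complex conjugate and satisfy $B_\pm^2>A^2$, $t_i^2>A^2$, so that $t_1^2+t_1t_2+t_2^2-A^2$ is real and positive. A direct inspection of the two terms in (\ref{det:L}), combined with the assumptions $\rho_{*+}\ne\rho_{*-}$, $\eta_*\ne 0$ and $\kappa_{*+}\ne\mu_{*+}\nu_{*+}$ to rule out degenerate algebraic cancellations, shows $\mathfrak{l}>0$ on the real-positive slice of $K_0$. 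By continuity of $\mathfrak{l}$ in $\arg\lambda$ and compactness of that slice, $\mathfrak{l}$ remains nonzero in an open neighborhood, and one then defines $\varepsilon_*\in[0,\pi/2)$ as the infimum of the $\varepsilon$'s for which $\mathfrak{l}$ is nonvanishing on all of $K_\varepsilon$.

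The main obstacle lies in this last step. Although $\mathfrak{l}$ is of size $O((|\lambda|^{1/2}+A)^6)$, its two constituent terms in (\ref{det:L}) are individually of the same order, so genuine cancellation is possible. Extracting a sign-definite or uniformly lower-bounded decomposition requires a careful polynomial manipulation of the explicit expressions for $\mathfrak{n}$ and for $(A^3-3A^2B_--AB_-^2-B_+^3)(t_1^2+t_1t_2+t_2^2-A^2)$, and then propagating the non-vanishing on the real axis to a full sector $|\arg\lambda|\le\pi-\varepsilon$ via a compactness argument that controls the behavior as $A/|\lambda|^{1/2}\to 0$ and $A/|\lambda|^{1/2}\to\infty$ within $K_\varepsilon$.
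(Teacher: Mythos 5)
Your high-level scaffold (upper bounds from the multiplier classes, parabolic scaling, a non-vanishing argument, then Bell's formula for \eqref{603}) matches the paper's strategy, but there are two concrete problems in the middle.

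First, the set
\[
K_\varepsilon=\{(\lambda,\xi')\in\widetilde{\Sigma}_{\varepsilon,0}\ :\ |\lambda|^{1/2}+A=1\}
\]
is \emph{not} compact: $\widetilde{\Sigma}_{\varepsilon,0}$ excludes both $\lambda=0$ and $\xi'=0$, so the endpoints of the slice $\{|\lambda|^{1/2}+A=1\}$ where $A\to 0$ (equivalently $A/|\lambda|^{1/2}\to 0$) and where $\lambda\to 0$ (equivalently $A/|\lambda|^{1/2}\to\infty$) are boundary points that the set does not contain. The statement ``\,$\inf_{K_\varepsilon}|\mathfrak{l}|>0$ reduces to nonvanishing on $K_\varepsilon$\,'' is therefore unjustified; continuity plus nonvanishing on a non-compact set does not give a positive infimum. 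You do acknowledge at the very end that the limits $A/|\lambda|^{1/2}\to 0,\infty$ need to be controlled, but that observation contradicts your earlier assertion that $K_\varepsilon$ is compact and is not resolved. The paper handles exactly this by splitting into three regimes: for $A\geq R_1|\lambda|^{1/2}$ and for $A\leq R_2^{-1}|\lambda|^{1/2}$ it computes explicit asymptotic expansions $\mathfrak{l}=\omega_1A^6(1+O(\delta_1))$ and $\mathfrak{l}=\omega_2\lambda^3(1+O(\delta_2))$ with nonzero leading coefficients, obtaining the lower bound directly; only in the intermediate regime $R_2^{-1}|\lambda|^{1/2}\leq A\leq R_1|\lambda|^{1/2}$, which after rescaling lands in a genuinely compact set $D_\varepsilon(R_1,R_2)$ (bounded away from $\lambda=0$ and $A=0$), does it invoke compactness.

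Second, the non-vanishing argument itself is left as the ``main obstacle'' with only a vague promise of ``direct inspection'' and ``careful polynomial manipulation'' to rule out sign cancellations between the two comparably-sized terms in \eqref{det:L}. This is where the real work is, and your sketch does not supply it. The paper's route is quite different and worth noting: assuming $\det L=0$ at a point of $D(R_1,R_2)$ with ${\rm Re}\,\tilde\lambda\geq 0$, it produces a nontrivial decaying solution of the homogeneous transformed system \eqref{605}, then multiplies by $\overline{u_{J\pm}}$, integrates by parts, and takes the real part of the resulting identity \eqref{606}, yielding \eqref{607} and hence a contradiction. This energy-method argument sidesteps any need to analyze the sign structure of the explicit polynomial $\mathfrak{l}$, and it is also what naturally produces the restriction to ${\rm Re}\,\tilde\lambda\geq 0$ and the subsequent need to extend the sector by uniform continuity, which is how $\varepsilon_*$ is defined. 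As written, your proposal does not contain a substitute for this step, so the proof is incomplete.
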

\begin{proof}
We can prove (\ref{603}) by using (\ref{602}) with the
Leibniz rule and the Bell's formula (\ref{Bell})
with $f(t)=t^{-1}$ and $g(\xi')=\mathfrak{l}(\lambda,\xi')$.

In order to prove (\ref{602}), we consider the three cases:
(1) $R_1|\lambda|^{1/2}\leq A$, 
(2) $R_2 A\leq |\lambda|^{1/2}$, 
(3) $R_2^{-1}|\lambda|^{1/2}\leq A\leq|\lambda|^{1/2}$
for large $R_1\geq 1$ and $R_2\geq 1$.

First, we consider the case: 
$R_1|\lambda|^{1/2}\leq A$ with large $R_1\geq1$.
In this case, we set $\delta_1=\lambda^{1/2}/A$ and see that
\begin{align*}
B_+=A(1+O(\delta_1)),\quad B_-=A(1+O(\delta_1)),\quad 
t_i=A(1+O(\delta_1))\enskip (i=1,2),
\end{align*}	
which imply that
\begin{align*}
\mathfrak{m}_i(\lambda,\xi')=&
8\rho_{*+}\mu_{*+}^{-1}
A^6(1+O(\delta_1))\quad(i=1,2),\\
\mathfrak{n}(\lambda,\xi')=&
4\rho_{*+}\mu_{*+}^{-1}A^4(1+O(\delta_1)).
\end{align*}
Here, by Lemma \ref{lem:t} 
we have $s_i-\rho_{*+}\mu_{*+}^{-1}\neq 0$.	
Then, we obtain
\begin{align*}
\mathfrak{l}(\lambda,\xi')
&=(24\rho_{*+}+8\rho_{*+}\mu_{*+}\mu_{*-})
A^6(1+O(\delta_1)),=:\omega_1 A^6(1+O(\delta_1)).
\end{align*}	
Summing up, there exists a positive constant 
$C_1:=\omega_1/2$ such that 
\begin{align}\label{608}
|\mathfrak{l}(\lambda,\xi')|\geq C_1(|\lambda|^{1/2}+A)^6.
\end{align}

Second, we consider the case $R_2 A\leq|\lambda|^{1/2}$ 
for large $R_2$.
In this case, we set $\delta_2=A/\lambda^{1/2}$ and see that
\begin{align*}
B_+&=\sqrt{\rho_{*+}\mu_{*+}^{-1}\lambda}(1+O(\delta_2)),\quad 
B_-=\sqrt{\rho_{*-}\mu_{*-}^{-1}\lambda}(1+O(\delta_2)),\\
t_i&=\sqrt{s_i\lambda}(1+O(\delta_2))\enskip (i=1,2),
\end{align*}		
which imply that
\begin{align*}
\mathfrak{m}_i
&=\rho_{*+}^2\mu_{*+}^{-2}\sqrt{s_i}
(\sqrt{s_i}+\rho_{*+}^{1/2}\mu_{*+}^{-1/2})
(s_1+\sqrt{s_1s_2}+s_2)
\lambda^{3}(1+O(\delta_2))\quad(i=1,2),\\
\mathfrak{n}
&=\rho_{*+}^2\mu_{*+}^{-2}
\Big(\sqrt{s_1}+\frac{\sqrt{s_2}}{\sqrt{s_1}}
(\sqrt{s_1}+\sqrt{s_2})\Big)
\lambda^{2}(1+O(\delta_2))\nonumber\\
&=\rho_{*+}^2\mu_{*+}^{-2}
\Big(\sqrt{s_2}+\frac{\sqrt{s_1}}{\sqrt{s_2}}
(\sqrt{s_1}+\sqrt{s_2})\Big)
\lambda^{2}(1+O(\delta_2)).\nonumber
\end{align*}	
Then, we obtain
\begin{align*}
\mathfrak{l}(\lambda,\xi')
&=\frac{(s_1+\sqrt{s_1s_2}+s_2)}{\sqrt{s_1}}
\Big(\rho_{*+}\mu_{*+}\mu_{*-}^{-1}
+\rho_{*+}^5\mu_{*+}^{-3} \mu_{*-}
\Big)\lambda^{3}(1+O(\delta_2))\\
&=\frac{(s_1+\sqrt{s_1s_2}+s_2)}{\sqrt{s_2}}
\Big(\rho_{*+}\mu_{*+}\mu_{*-}^{-1}
+\rho_{*+}^5\mu_{*+}^{-3} \mu_{*-}
\Big)\lambda^{3}(1+O(\delta_2))\\
&=:\omega_2\lambda^{3}(1+O(\delta_2)).
\end{align*}
From Lemma \ref{lem:t}, we obtain $\omega_2\neq0$.	
Summing up, there exists a positive constant $C_2:=|\omega_2|/2$ 
such that
\begin{align}\label{609}
|\mathfrak{l}(\lambda,\xi)|\geq C_2(|\lambda|^{1/2}+A)^6.
\end{align}	

Third, we consider the case 
$R_2^{-1}|\lambda|^{1/2}\leq A\leq R_1|\lambda|^{1/2}$.
Here, we set
\begin{align*}
\widetilde{\xi'}
=\frac{\xi'}{|\lambda|^{1/2}+A},\quad
&\widetilde{A}
=\frac{A}{|\lambda|^{1/2}+A},\quad
\widetilde{\lambda}
=\frac{\lambda}{(|\lambda|^{1/2}+A)^2},\\
\widetilde{B}_\pm
=\sqrt{\widetilde{A^2}+\rho_{*\pm}(\mu_{*\pm})^{-1}
\widetilde{\lambda}},\quad
&\widetilde{t_i}
=\sqrt{\widetilde{A^2}+s_i\widetilde{\lambda}},\nonumber \\
D_\varepsilon(R_1,R_2)
=\{(\widetilde{\lambda},\widetilde{A}) \mid 
&(1+R_1)^{-2}\leq |\widetilde{\lambda}|
\leq R_2^2(1+R_2)^2,\nonumber\\
&(1+R_2)^{-1}\leq \widetilde{A}\leq R_1(1+R_1)^{-1} ,\enskip
\widetilde{\lambda}\in\Sigma_\varepsilon\}. \nonumber
\end{align*}
If $(\lambda,\xi')$ satisfies the condition: 
$R_2^{-1}|\lambda|^{1/2}\leq A\leq R_1|\lambda|^{1/2}$ and 
$\lambda\in\Sigma_\varepsilon$, then 
$(\widetilde{\lambda},\widetilde{A})\in D_\varepsilon(R_1,R_2)$.
We also define 
$\mathfrak{l}(\widetilde{\lambda},\widetilde{\xi'})$ 
by replacing $A,B_\pm$ and $t_i\ (i=1,2)$ by 
$\widetilde{A},\widetilde{B}_\pm $ and $\widetilde{t_i}$, 
respectively.
Then, we have	
\begin{align}\label{604}
\mathfrak{l}(\lambda,\xi')
=(|\lambda|^{1/2}+A)^6
\mathfrak{l}(\widetilde{\lambda},\widetilde{\xi'}).
\end{align}
We prove that 
$\mathfrak{l}(\widetilde{\lambda},\widetilde{\xi'})\neq 0$ 
provided that 
$(\widetilde{\lambda},\widetilde{A})\in D_\varepsilon(R_1,R_2)$ 
by contradiction.
Suppose that 
$\mathfrak{l}(\widetilde{\lambda},\widetilde{\xi'})=0$, 
namely, $\det L=0$. 
In this case, in view of (\ref{443}) we assume that there exist
$\mathbf{u}_\pm(x_N)=(u_{1\pm}(x_N),\dots,u_{N\pm}(x_N))\neq0$, 
$\rho_+(x_N)\neq0$, and $\pi_-(x_N)\neq0$
satisfying (\ref{402}) - (\ref{412}) with 
$\widehat{d}(0)=0$, $\widehat{H}(0)=0$,
$\widehat{h}_m(0)=0$, and $\rho_+\neq0$,
that is, $\mathbf{u}_\pm(x_N)\neq0$, $\rho_+(x_N)\neq0$, and 
$\pi_-(x_N)\neq0$ satisfy the following homogeneous equations: 
for $x_N\neq0$ and $w_+=\sum_{j=1}^{N-1}i\xi_j u_{j+}(x_N)
+\partial_N u_{N+}(x_N)$ ($x_N>0$), 
\begin{alignat}2\label{605}
\lambda\rho_++\rho_{*+}w_+&=0 &\text{ for $x_N>0$},\\
\rho_{*+}\lambda u_{j+}
-\mu_{*+}\sum_{k=1}^{N-1}i\xi_k(i\xi_ku_{j+}+i\xi_j u_{k+})
-\mu_{*+}\partial_N(\partial_N u_{j+}+i\xi_j u_{N+})
\nonumber\\
-(\nu_{*+}-\mu_{*+})i\xi_j w_+
-i\xi_j\rho_{*+}\kappa_{*+}(\partial_N^2-|\xi'|^2) \rho_+&=0
&\text{ for $x_N>0$},\nonumber\\
\rho_{*+}\lambda u_{N+} -\mu_{*+}(\partial_N^2-|\xi'|^2)u_{N+} 
-\mu_{*+}\sum_{k=1}^{N-1}i\xi_k(i\xi_k u_{N+} 
+\partial_N u_{k+ })&\nonumber\\
-2\mu_{*+}\partial^2_N u_{N+} 
-(\nu_{*+}-\mu_{*+})\partial_N  w_+
-\rho_{*+}\kappa_{*+}\partial_N(\partial_N^2-|\xi'|^2) \rho_+ &=0
&\text{ for $x_N>0$},\nonumber\\
\sum_{j=1}^{N-1}i\xi_j u_{j-} +\partial_N u_{N-} &=0
&\text{ for $x_N<0$},\nonumber\\
\rho_{*-}\lambda u_{j- }-\mu_{*-}
\sum_{k=1}^{N-1}i\xi_k(i\xi_k u_{j- }+i\xi_j u_{k- })
-\mu_{*-}\partial_N(\partial_N u_{j-} +i\xi_j u_{N- })
-i\xi_j \pi_- &=0
&\text{ for $x_N<0$},\nonumber\\
\rho_{*-}\lambda u_{N-} -\mu_{*-}
\sum_{k=1}^{N-1}i\xi_k(i\xi_k u_{N-} 
+\partial_N u_{k-} )
-2\mu_{*-}\partial^2_N u_{N-}
-\partial_N \pi_- &=0&{\rm for}\ x_N<0,\nonumber\\
\mu_{*-}(\partial_Nu_{m-}+i\xi_mu_{N-})|_-
-\mu_{*+}(\partial_Nu_{m+}+i\xi_mu_{N+})|_+&=0,\nonumber\\
\{2\mu_{*-}\partial_Nu_{N-}-\pi_-\}|_-
-\{2\mu_{*+}\partial_Nu_{N+}
+(\nu_{*+}-\mu_{*+})
(i\xi'\cdot u'_++\partial_Nu_{N+})\nonumber\\
-\rho_{*+}\kappa_{*+}(\partial_N^2-|\xi'|^2)\rho_+ \}|_+&=0,
\nonumber\\
\partial_N \rho_+|_+&=0.\nonumber
\end{alignat}
Set $(f,g)_\pm=\pm\int_0^\infty f(x_N)\overline{g(x_N)}dx_N$ and 
$\| f\|_\pm=(f,f)_\pm^{1/2}$.
Multiplying the equations in (\ref{605}) by 
$\overline{u_{J\pm}}$ and using integration by parts 
and interface conditions in (\ref{605}), we have
\begin{align}\label{606}
0=
&\lambda\Big(\rho_{*+}\sum_{J=1}^{N}\| u_{J+} \|_+^2+\rho_{*-}
\sum_{J=1}^{N}\| u_{J-} \|_-^2\Big)\\
&+\mu_{*+}\Big(\sum_{j,k=1}^{N-1}\|i\xi_k u_{j+} \|_+^2
+\Big\|\sum_{j=1}^{N-1}i\xi_j u_{j+} \Big\|_+^2+\sum_{k=1}^{N-1}\|\partial_N u_{j+} 
+i\xi_j u_{N+} \|_+^2+2\|
\partial_N u_{N+} \|^2_+ \Big)\nonumber\\
&+(\nu_{*+}-\mu_{*+})\|w_+\|_+^2
+\frac{\rho_{*+}\kappa_{*+}}{\lambda}
(\|\partial_N w_+\|_+^2+|\xi'|^2\|w_+\|_+^2) \nonumber\\
&+\mu_{*-}\Big(\sum_{j,k=1}^{N-1}\|i\xi_k u_{j- }\|_-^2
+\|\sum_{j=1}^{N-1}i\xi_j u_{j-} \|_-^2+\sum_{k=1}^{N-1}\|\partial_N u_{j-} 
+i\xi_j u_{N -}\|_-^2
+2\|\partial_N u_{N- }\|^2_- \Big)\nonumber.
\end{align}
Here, we use the identity
\begin{align*}
&\sum_{j,k=1}^{N-1}(i\xi_k u_{j\pm }
+i\xi_j u_{k\pm },i\xi_k u_{j\pm} )_\pm
=\sum_{j,k=1}^{N-1}\|i\xi_k  u_{j\pm} \|_\pm^2
+\Big\|\sum_{j=1}^{N-1}i\xi_j u_{j\pm} \Big\|_\pm^2,\\
&\sum_{j=1}^{N-1}
(\partial_N u_{j\pm} +i\xi_j u_{N\pm} ,\partial_N u_{j\pm} )_\pm
+\sum_{k=1}^{N-1}
(i\xi_k u_{N\pm} +\partial_N u_{k\pm} ,i\xi_k u_{N\pm} )_\pm=\sum_{j=1}^{N-1}\|\partial_N u_{j\pm} 
+i\xi_j u_{N\pm} \|_\pm^2.
\end{align*}
Taking the real part of (\ref{606}) to obtain
\begin{align*}
0=
&({\rm Re}\,\lambda)\Big(\rho_{*+}
\sum_{J=1}^{N}\| u_{J+} \|_+^2
+\rho_{*-}\sum_{J=1}^{N}\| u_{J-} \|_-^2\Big)\\
&+\mu_{*+}\Big(\sum_{j,k=1}^{N-1}\|i\xi_k u_{j+} \|_+^2
+\Big\|\sum_{j=1}^{N-1}i\xi_j u_{j+} \Big\|_+^2+\sum_{k=1}^{N-1}
\|\partial_N u_{j+} +i\xi_j u_{N+} \|_+^2
+2\|\partial_N u_{N+ }\|^2_+ \Big)\\
&+(\nu_{*+}-\mu_{*+})\|w_+\|_+^2
+\frac{\rho_{*+}\kappa_{*+}({\rm Re}\,\lambda)}{|\lambda|^2}
(\|\partial_N w_+\|_+^2+|\xi'|^2\|w_+\|_+^2)  \\
&+\mu_{*-}\Big(\sum_{j,k=1}^{N-1}\|i\xi_k u_{j-} \|_-^2
+\Big\|\sum_{j=1}^{N-1}i\xi_ju_{j-}\Big\|_-^2+\sum_{k=1}^{N-1}
\|\partial_N u_{j- }+i\xi_j u_{N- }\|_-^2
+2\|\partial_Nu_{N-}\|^2_- \Big) ,
\end{align*}
which, combined with the inequality:
\begin{align*}
\|w_+\|_+^2
\leq&\sum_{j,k=1}^{N-1}\|i\xi_k  u_{j+} \|_+^2
+\Big\|\sum_{j=1}^{N-1}i\xi_j u_{j+ }\Big\|_+^2
+2\|\partial_N u_{N+ }\|^2_+,
\end{align*}
furnishes that	
\begin{align}\label{607}
0\geq
&({\rm Re}\,\lambda)\Big(\rho_{*+}\sum_{J=1}^{N}
\| u_{J+} \|_+^2
+\rho_{*-}\sum_{J=1}^{N}
\| u_{J-} \|_-^2\Big) +\nu_{*+}\|w_+\|_+^2 \\
&+\mu_{*+}\sum_{k=1}^{N-1}\|\partial_N u_{j+} 
+i\xi_j u_{N+} \|_+^2
+\mu_{*-}\sum_{k=1}^{N-1}
\|\partial_N u_{j- }+i\xi_j u_{N- }\|_-^2 \nonumber\\
&+\frac{\rho_{*+}\kappa_{*+}{\rm Re}\,\lambda}{|\lambda|^2}
(\|\partial_N w_+\|_+^2+|\xi'|^2\|w_+\|_+^2) \nonumber.
\end{align}
When ${\rm Im}\,\lambda=0$, we have $\lambda>0$ 
because $\lambda\in \Sigma_\varepsilon$.
However, this contradict to (\ref{607}).
Summing up, we have 
$\mathfrak{l}(\widetilde{\lambda},\widetilde{\xi'})\neq 0 $ 
for $(\widetilde{\lambda},\widetilde{A})\in
D(R_1,R_2)$,
where 
\begin{align*}
D(R_1,R_2)
=\{(\widetilde{\lambda},\widetilde{A}) \mid 
(1+R_1)^{-2}\leq |\widetilde{\lambda}|
\leq R_2^2(1+R_2)^2,\enskip
(1+R_2)^{-1}\leq \widetilde{A}\leq R_1(1+R_1)^{-1} ,\enskip
{\rm Re}\,\widetilde{\lambda}\geq 0 \}.
\end{align*}
Then, there exists a positive constant $C_3$ such that
\begin{align*}
\inf_{(\widetilde{\lambda},\widetilde{A})\in D(R_1,R_2)}|
\mathfrak{l}(\widetilde{\lambda},\widetilde{\xi'})|=2C_3>0
\end{align*}
because $D(R_1,R_2)$ is compact.
Since 
$\mathfrak{l}(\widetilde{\lambda},\widetilde{\xi'})$ is
continuous in 
$\Sigma_{\varepsilon,\lambda_0}\times\mathbb{R}^{N-1}$
for $0<\varepsilon<\pi/2$ and
$D_\varepsilon(R_1,R_2)$ is compact,
$\mathfrak{l}(\widetilde{\lambda},\widetilde{\xi'})$ is
uniformly continuous in 
$D_\varepsilon(R_1,R_2)\times\mathbb{R}^{N-1}$
for $0<\varepsilon<\pi/2$.
Then, there exist a constant 
$\varepsilon_*\in(0,\pi/2)$ such that
\begin{align*}
|\mathfrak{l}(\widetilde{\lambda},\widetilde{\xi'})|\geq C_3
\quad\text{for}\quad (\widetilde{\lambda},\widetilde{A})\in
D_\varepsilon(R_1,R_2), \enskip 
\varepsilon\in (\varepsilon_*,\pi/2),
\end{align*}
which, combined with (\ref{604}), furnishes that
\begin{align}\label{610}
|\mathfrak{l}(\lambda,\xi')|\geq C_3(|\lambda|^{1/2}+A)^6
\end{align}
provided that  
$R_2^{-1}|\lambda|^{1/2}\leq A\leq R_1|\lambda|^{1/2}$ and 
$\lambda\in\Sigma_{\varepsilon}$.

Summing up, setting $C=\min(C_1,C_2,C_3)$, 
by (\ref{608}), (\ref{609}), and (\ref{610}), 
we have (\ref{602}),
which completes the proof of Lemma \ref{lem601}.	
\end{proof}




\section{Problem with surface tension and height function}
\label{Sect.surface}

In this final section, we consider the following problem:
\begin{align}\label{701}
\lambda\rho_++\rho_{*+}{\rm div}\,\mathbf{u}_+&=0 
&\text{ in $\mathbb{R}^N_+$},\\
\rho_{*+}\lambda\mathbf{u_+}-\mu_{*+}\Delta\mathbf{u_+}
-\nu_{*+}\nabla{\rm div}\,\mathbf{u_+}
-\kappa_{*+}\Delta\nabla\rho_+&=0
&\text{ in $\mathbb{R}^N_+$}, \nonumber\\
{\rm div}\,\mathbf{u}_-=0,\quad 
\rho_{*-}\lambda\mathbf{u}_--\mu_{*-}\Delta\mathbf{u}_-
+\nabla \pi_-&=0 
&\text{ in $\mathbb{R}^N_-$}, \nonumber\\
\mu_{*-}D_{mN}(\mathbf{D(u_-)})|_-
-\mu_{*+}D_{mN}(\mathbf{u}_+)|+&=0,	\nonumber\\
\{\mu_{*-}D_{NN}(\mathbf{u}_-)-\pi_- \}|_-&
=\sigma_-\Delta'H, \nonumber\\
\{\mu_{*+}D_{NN}(\mathbf{u}_+)+(\nu_+{*+}-\mu_{*+})
{\rm div}\,\mathbf{u}_+
+\kappa_{*+}\Delta\nabla\rho_+ \}|_+&
=\sigma_+\Delta'H, \nonumber\\
u_{m-}|_--u_{m+}|_+=0,\quad \partial_N\rho_+|_+&=0,
\nonumber\\
\lambda H
-\Big(\frac{\rho_{*-}}{\rho_{*-}-\rho_{*+}}u|_{N-}|_-
- \frac{\rho_{*+}}{\rho_{*-}-\rho_{*+}}u_{N+}|_+ \Big) &=d,
\nonumber
\end{align}
where $\sigma_\pm=\rho_{*\pm}\sigma/(\rho_{*-}-\rho_{*+})$, and
prove the following theorem.
\begin{theo}\label{Th:sect.6}
Let $1<q<\infty$ and $\varepsilon_* <\varepsilon<\pi/2$.
Assume that $\rho_{*+}\neq\rho_{*-}$, $\eta_*\neq 0$,
and $\kappa_{*+}\neq \mu_{*+}\nu_{*+}$.
Then, there exist a positive constant $\lambda_0$ 
and operator families
$\mathcal{A}^4_\pm(\lambda)$, $\mathcal{B}^4_+(\lambda)$,
$\mathcal{P}^4_-(\lambda)$, and $\mathcal{H}(\lambda)$
with
\begin{align*}
\mathcal{A}^4_{\pm}(\lambda)\in&
{\rm Anal}\,(\Sigma_{\varepsilon,\lambda_0},
\mathcal{L}(W^2_q(\mathbb{R}^N),W^2_q(\mathbb{R}^N_\pm)^N),\\
\mathcal{B}^4_{+}(\lambda)\in&
{\rm Anal}\,(\Sigma_{\varepsilon,\lambda_0},
\mathcal{L}(W^2_q(\mathbb{R}^N),W^3_q(\mathbb{R}^N_+)),\\
\mathcal{P}^4_{-}(\lambda)\in &
{\rm Anal}\,(\Sigma_{\varepsilon,\lambda_0},\mathcal{L}
(W^2_q(\mathbb{R}^N),\hat{W}^1_q(\mathbb{R}^N_-))),\\
\mathcal{H}(\lambda)\in&
{\rm Anal}\,(\Sigma_{\varepsilon,\lambda_0},
\mathcal{L}(W^2_q(\mathbb{R}^N),W^3_q(\mathbb{R}^N))),	
\end{align*} 	
such that for any $\lambda\in \Sigma_{\varepsilon,\lambda_0}$ 
and 
$d\in W^2_q(\mathbb{R}^N)$,
$\mathbf{u}_\pm=\mathcal{A}^4_\pm(\lambda) d$,
$\rho_+=\mathcal{B}^4_+(\lambda) d$,
$\pi_-=\mathcal{P}^4_-(\lambda) d$,
and $H=\mathcal{H}(\lambda)d$ 
are solutions of problem (\ref{701}).
Furthermore, for $s=0,1$, we have
\begin{align*}	
\mathcal{R}_{\mathcal{L}(W^2_q(\mathbb{R}^N),
L_q(\mathbb{R}^N)^{N^3+N^2+N})}
(\{(\tau\partial_\tau)^s(G_\lambda^1 \mathcal{A}^3_\pm(\lambda))
\mid \lambda \in \Sigma_{\varepsilon,\lambda_0}\})
\leq& c_0,\\
\mathcal{R}_{\mathcal{L}(W^2_q(\mathbb{R}^N),
L_q(\mathbb{R}^N)^{N^3+N^2}\times W^1_q(\mathbb{R}^N))}
(\{(\tau\partial_\tau)^s(G_\lambda^2 \mathcal{B}^3_+(\lambda))
\mid \lambda \in \Sigma_{\varepsilon,\lambda_0}\})
\leq& c_0,\\
\mathcal{R}_{\mathcal{L}
(W^2_q(\mathbb{R}^N),L_q(\mathbb{R}^N)^{N})}
(\{(\tau\partial_\tau)^s(\nabla \mathcal{P}^3_-(\lambda))
\mid \lambda  \in \Sigma_{\varepsilon,\lambda_0}\})
\leq& c_0,\\
\mathcal{R}_{\mathcal{L}
(W^2_q(\mathbb{R}^N),W^2_q(\mathbb{R}^N)^{N+1})}
(\{(\tau\partial_\tau)^s(G_\lambda^3 \mathcal{H} (\lambda))
\mid \lambda  \in \Sigma_{\varepsilon,\lambda_0}\})
\leq& c_0,
\end{align*}
with some constant $c_0$.	
\end{theo}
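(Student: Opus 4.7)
The idea is to reduce problem (\ref{701}) to a single scalar resolvent equation for the height function $H$, then invert its symbol and use the solution formulas of Section \ref{Sect.solution} to recover $\mathbf{u}_\pm$, $\rho_+$, $\pi_-$. Since the only non-vanishing datum in (\ref{701}) is $d$, I first apply the solution representations (\ref{4.428}) with $\widehat{h}_m(0)=0$. Because $M_{0+}(0)=M_{1+}(0)=M_{2+}(0)=M_{-}(0)=0$, the traces at $x_N=0$ reduce to
\begin{equation*}
\widehat{u}_{N+}|_+(0)=A^{2}R_{NN}^{+}(\lambda,\xi')\widehat{H}(0),\qquad
\widehat{u}_{N-}|_-(0)=A^{2}R_{NN}^{-}(\lambda,\xi')\widehat{H}(0),
\end{equation*}
with $R_{NN}^{\pm}\in\mathbb{M}_{-1,2,\varepsilon,0}$ given explicitly by (\ref{452}) and (\ref{455}).

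Substituting these into the last equation of (\ref{701}) produces the scalar Fourier equation
\begin{equation*}
\mathfrak{L}_{\sigma}(\lambda,\xi')\widehat{H}(\xi',0)=\widehat{d}(\xi',0),\qquad
\mathfrak{L}_{\sigma}:=\lambda-\frac{A^{2}(\rho_{*-}R_{NN}^{-}-\rho_{*+}R_{NN}^{+})}{\rho_{*-}-\rho_{*+}},
\end{equation*}
where $A^{2}R_{NN}^{\pm}\in\mathbb{M}_{1,2,\varepsilon,0}$ by Lemma \ref{lem:multiplier}. The next step is to establish, for $\lambda_{0}$ sufficiently large, the Lopatinski-type bound
\begin{equation*}
|\mathfrak{L}_{\sigma}(\lambda,\xi')|\geq C(|\lambda|+A),\qquad (\lambda,\xi')\in\widetilde{\Sigma}_{\varepsilon,\lambda_{0}},
\end{equation*}
together with $(\tau\partial_\tau)^{s}\partial_{\xi'}^{\alpha'}$-estimates placing $\mathfrak{L}_{\sigma}^{-1}$ into $\mathbb{M}_{-1,2,\varepsilon,\lambda_{0}}$. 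The proof mirrors the three-region analysis of Lemma \ref{lem601}: for $R_{1}|\lambda|^{1/2}\leq A$ the surface-tension contribution expands at leading order to a non-zero multiple of $\sigma A$ coming from the $\sigma_{\pm}/(2\mu_{*\pm}B_{\pm})$ terms of $R_{NN}^{\pm}$ and from the $\sigma_{\pm}$ parts of $F$ in (\ref{442}); for $R_{2}A\leq|\lambda|^{1/2}$ the term $\lambda$ dominates; in the intermediate regime a contradiction--compactness argument based on the energy identity obtained by testing the homogeneous form of (\ref{701}) against $\overline{u_{J\pm}}$ as in (\ref{606}), now enriched by the non-negative boundary contribution $\sigma|\xi'|^{2}|\widehat{H}(0)|^{2}$ arising from the surface-tension boundary conditions, rules out a non-trivial kernel. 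Unlike Lemma \ref{lem601}, this argument fails on a bounded subset of $\Sigma_\varepsilon$, which forces $\lambda_{0}>0$.

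Given the bound, I define
\begin{equation*}
[\mathcal{H}(\lambda)d](x):=\mathcal{F}_{\xi'}^{-1}\bigl[e^{-A|x_N|}\mathfrak{L}_{\sigma}(\lambda,\xi')^{-1}\widehat{d}(\xi',0)\bigr](x'),
\end{equation*}
and obtain $\mathcal{A}_{\pm}^{4}(\lambda)$, $\mathcal{B}_{+}^{4}(\lambda)$, $\mathcal{P}_{-}^{4}(\lambda)$ by composing the $\widehat{H}(0)$-dependent parts of (\ref{4.428}) with $\mathcal{H}(\lambda)d$, viewed as acting on $\widetilde{\mathbf{H}}=(\lambda H,\lambda^{1/2}\nabla H,\nabla^{2}H)$. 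The $\mathcal{R}$-boundedness of $(\tau\partial_\tau)^{s}G_{\lambda}^{3}\mathcal{H}(\lambda)$ follows from the symbol-class estimate $\mathfrak{L}_{\sigma}^{-1}\in\mathbb{M}_{-1,2,\varepsilon,\lambda_{0}}$ together with the kernel-pointwise bound of Shibata--Shimizu \cite{SS2012} used in the proof of Lemma \ref{lem.4.4}; the $\mathcal{R}$-bounds for $\mathcal{A}_{\pm}^{4}$, $\mathcal{B}_{+}^{4}$, $\mathcal{P}_{-}^{4}$ are then inherited from composition with the operator family of Theorem \ref{Th05} together with Corollary \ref{cor.operator}.

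The main obstacle will be the Lopatinski analysis of $\mathfrak{L}_{\sigma}$. In the intermediate region $R_{2}^{-1}|\lambda|^{1/2}\leq A\leq R_{1}|\lambda|^{1/2}$ the energy identity now carries the surface-tension boundary term, and one must show that $\mathrm{Re}\,\lambda\geq 0$ combined with $\sigma>0$ forces $\mathbf{u}_{\pm}$, $\rho_{+}$, $\pi_{-}$, and $H$ all to vanish; this is delicate and is precisely what imposes the threshold $\lambda_{0}>0$. In the high-$A$ regime the explicit expansion is also subtle because $R_{NN}^{\pm}$ are built from $F$, $G$, $\mathfrak{l}$, $L_{11}$, $L_{21}$ (cf.\ (\ref{442})--(\ref{446})), and several terms of the same formal order must be collected; showing that the leading $\sigma$-coefficient is non-zero relies on the assumptions $\rho_{*+}\neq\rho_{*-}$, $\eta_{*}\neq 0$, and $\kappa_{*+}\neq\mu_{*+}\nu_{*+}$.
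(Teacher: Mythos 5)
Your overall reduction is the same as the paper's: after setting $g_j=h_j=k=0$ in the representation \eqref{4.428}, the traces of $\widehat{u}_{N\pm}$ at $x_N=0$ are proportional to $R_{NN}^{\pm}\widehat{H}(0)$, the kinetic condition in \eqref{701} collapses to a scalar equation $(\lambda+K_H)\widehat{H}(0)=\widehat{d}(0)$ (your $\mathfrak{L}_\sigma$), and the remaining unknowns are recovered by composing with the solution operators of Theorem~\ref{Th05} and Corollary~\ref{cor.operator}. That matches the paper. However, two of your concrete steps diverge from, and fall short of, what the paper actually proves.

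First, the Lopatinski analysis of $\lambda+K_H$. The paper's Lemma~\ref{lem:K} uses only \emph{two} regimes, not three, and no energy or compactness argument appears there. When $R_1|\lambda|^{1/2}\leq A$, the leading asymptotics give $K_H=\omega_3 A(1+O(\delta_1))$ with $\omega_3>0$, and \eqref{501} finishes the estimate. When $A\leq R_1|\lambda|^{1/2}$, the paper simply bounds $|K_H|\leq C|\lambda|^{1/2}$ using the crude estimates $|R_{NN}^{\pm}|\leq C$ (which follow from \eqref{442}--\eqref{455} and Lemma~\ref{lem601}), and then chooses $\lambda_0$ so large that $|\lambda|\geq 2C|\lambda|^{1/2}$; the term $\lambda$ from $\lambda H$ dominates $K_H$. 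That algebraic dominance is what forces $\lambda_0>0$, not a breakdown of a compactness argument. The contradiction--compactness argument via the energy identity \eqref{606} is used only in Lemma~\ref{lem601} for the Lopatinski determinant $\mathfrak{l}(\lambda,\xi')$, which has no $\lambda H$ coupling. Your proposed variant, testing the full homogeneous \eqref{701}, would have to deal with the boundary terms coming from $\sigma_\pm\Delta' H$ together with the kinetic relation $\lambda H = \frac{\rho_{*-}}{\rho_{*-}-\rho_{*+}}u_{N-}|_--\frac{\rho_{*+}}{\rho_{*-}-\rho_{*+}}u_{N+}|_+$, which produce a term proportional to $\bar\lambda\,|\xi'|^2|\widehat{H}(0)|^2$ whose real part is not manifestly non-negative for all $\lambda\in\Sigma_\varepsilon$. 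That step would need justification that you have not supplied, and the paper avoids it entirely.

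Second, the extension of the height function. Defining $[\mathcal{H}(\lambda)d](x)=\mathcal{F}^{-1}_{\xi'}[e^{-A|x_N|}(\lambda+K_H)^{-1}\widehat{d}(\xi',0)](x')$ does not give an $\mathcal{L}(W^2_q(\mathbb{R}^N),W^3_q(\mathbb{R}^N))$-valued family. The kernel $e^{-A|x_N|}$ is only Lipschitz at $x_N=0$, so you cannot get three $L_q$-integrable $x_N$-derivatives across the interface, and since $A=|\xi'|$ vanishes near $\xi'=0$ there is no uniform exponential decay in $x_N$, which breaks the $L_q(\mathbb{R}^N)$ estimates on low-frequency components. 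The paper instead uses $e^{-(1+A^2)^{1/2}x_N}$ on $x_N>0$ (so the exponent is bounded below by $1$) and then extends to $x_N<0$ by a Hestenes-type reflection $\sum_{j=1}^{4}a_j[\widetilde{\mathcal{H}}(\lambda)d](x',-jx_N)$ with $\sum_j a_j(-j)^k=1$, $k=0,\dots,3$, which glues the first three derivatives across $x_N=0$; both ingredients are essential to land in $W^3_q(\mathbb{R}^N)$ with the stated $\mathcal{R}$-bounds. As written, your construction of $\mathcal{H}(\lambda)$ is not in the correct space.
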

\begin{rema}
Combining Theorem \ref{Th:sect.6} with Theorem \ref{Th03},
Theorem \ref{Th04}, and Theorem \ref{Th05},
we have Theorem \ref{Th02} immediately.
\end{rema}
As discussed in Sect. \ref{Sect.solution},
applying the partial Fourier transform to (\ref{701}),
we have	
\begin{align*}
\lambda\widehat{\rho}_++\rho_{*+}
\widehat{{\rm div}\,\mathbf{u}_+}&=0
&\text{ for $x_N>0$},\\
\rho_{*+}\lambda\widehat{u}_{j+}-\mu_{*+}
(\partial_N^2-|\xi'|^2)\widehat{u}_{j+}
-\nu_{*+}i\xi_j\widehat{{\rm div}\,\mathbf{u_+}}&\\
\qquad-i\xi_j\{\kappa_{*+}(\partial_N^2-|\xi'|^2) \}
\widehat{\rho}_+&=0&\text{ for $x_N>0$},\nonumber\\
\rho_{*+}\lambda\widehat{u}_{N+}
-\mu_{*+}(\partial_N^2-|\xi'|^2)\widehat{u}_{N+}
-\nu_{*+}\partial_N\widehat{{\rm div}\,\mathbf{u_+}}&\\
\qquad-\partial_N\{
\kappa_{*+}(\partial_N^2-|\xi'|^2) \}\widehat{\rho}_+&=0
&\text{ for $x_N>0$},\nonumber\\
\widehat{{\rm div}\,\mathbf{u}_-}&=0&\text{ for $x_N<0$},\\
\rho_{*-}\lambda\widehat{u}_{j-}-\mu_{*-}(\partial_N^2-|\xi'|^2)
\widehat{u}_{j-}-i\xi_j\widehat{\pi}_-&=0
&\text{ for $x_N<0$},\\
\rho_{*-}\lambda\widehat{u}_{N-}
-\mu_{*-}(\partial_N^2-|\xi'|^2)\widehat{u}_{N-}
-\partial_N\widehat{\pi}_-&=0&\text{ for $x_N<0$},
\end{align*} 	
with the interface condition:
\begin{align*}
\mu_{*-}(\partial_N\widehat{u}_{m-}+i\xi_m\widehat{u}_{N-})|_--
\mu_{*+}(\partial_N\widehat{u}_{m+}+i\xi_m\widehat{u}_{N+})|_+
&=0,\\	
\{2\mu_{*-}\partial_N\widehat{u}_{N-}-\widehat{\pi}_- \}|_-
&=\sigma_-|\xi'|^2\widehat{H}(0),\\
\{2\mu_{*+}\partial_N\widehat{u}_{N+}
+(\nu_{*+}-\mu_{*+})\widehat{{\rm div}\,\mathbf{u_+}}
\qquad&\\
+\kappa_{*+}(\partial_N^2-|\xi'|^2)
\widehat{\rho}_+ \}|_+
&=\sigma_+|\xi'|^2\widehat{H}(0),\nonumber\\
\widehat{u}_{m-}|_--\widehat{u}_{m+}|_+&=0,\\
\partial_N\widehat{\rho}_+&=0,
\end{align*}
and the resolvent equation for $H$:	
\begin{align}\label{resolvent:H}
\lambda \widehat{H}(0)
-\Big(\frac{\rho_{*-}}{\rho_{*-}-\rho_{*+}}\widehat{u}_{N-}|_-
-\frac{\rho_{*+}}{\rho_{*-}-\rho_{*+}}\widehat{u}_{N+}|_+\Big)
=\widehat{d}(0).
\end{align}	
Our task is to represent $\widehat{H}$ in terms of 
$\widehat{d}(0)$, so that we look for solutions 
$\widehat{u}_{J\pm}$ and $\widehat{\pi}_-$ of 
the form (\ref{4.428}) with $g_j=h_j=k=0$.
In view of (\ref{417}), (\ref{418}), and (\ref{419}),
when $g_j=h_j=k=0$, we have	
\begin{align*}
\widehat{u}^+_{N+}(0)=\alpha_{N+}=
&AR^+_{NN}\widehat{H}(0),\\
\widehat{u}^-_{N-}(0)=\alpha_{N-}=
&AR^-_{NN}\widehat{H}(0).\nonumber
\end{align*}
Inserting these formulas into (\ref{resolvent:H}),
we have
\begin{align}\label{eq:K}
(\lambda+K_H)\widehat{H}(0)=\widehat{d}(0)
\end{align}
with	
\begin{align}\label{Def:K}
K_H=\frac{\rho_{*-}}{\rho_{*-}-\rho_{*+}}
AR^-_{NN}
-\frac{\rho_{*+}}{\rho_{*-}-\rho_{*+}}
AR^+_{NN}.
\end{align}
We now prove the following lemma.
\begin{lemm}\label{lem:K}
Let $\varepsilon_*<\varepsilon<\pi/2$ and 
let $K_H$ be the function defined in (\ref{Def:K}).
Assume that $\rho_{*+}\neq\rho_{*-}$, $\eta_*\neq 0$,
and $\kappa_{*+}\neq \mu_{*+}\nu_{*+}$.
Then there exists a positive constant $\lambda_0$ depending on		
$\varepsilon$, $\mu_{*\pm}$, $\nu_{*+}$,
$\kappa_{*+}$, and $\rho_{*\pm}$ such that	
\begin{align}\label{702}
|\partial_{\xi'}^{\alpha'}
\{(\tau\partial_\tau)^s(\lambda+K_H)^{-1} \}|
\leq C_{\alpha'}(|\lambda|^{1/2}+A)^{-1}
A^{-|\alpha'|}\qquad(s=0,1)
\end{align}	
for any multi-index $\alpha'\in\mathbb{N}^{N-1}_0$ and 
$(\lambda,\xi')\in \widetilde{\Sigma}_{\varepsilon,\lambda_0}$
with some constant $C_{\alpha'}$ depending on
$\alpha'$, $\lambda_0$, $\varepsilon$, $\mu_{*\pm}$, $\nu_{*+}$,
$\kappa_{*+}$, and $\rho_{*\pm}$.
\end{lemm}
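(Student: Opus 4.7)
The plan is to first establish the pointwise lower bound
\begin{equation*}
|\lambda+K_H(\lambda,\xi')|\geq c\,(|\lambda|^{1/2}+A)
\qquad\text{on }\widetilde{\Sigma}_{\varepsilon,\lambda_0}
\end{equation*}
for some $c>0$ and all sufficiently large $\lambda_0$, and then upgrade it to the full derivative estimate (\ref{702}) by the Leibniz rule combined with Bell's formula (\ref{Bell}) applied to $f(t)=t^{-1}$ and $g=\lambda+K_H$, exactly as in the deduction of (\ref{603}) from (\ref{602}). For this upgrade it suffices to know that $K_H$ itself is controlled as a multiplier, which follows mechanically from the explicit expressions (\ref{452}), (\ref{455}) and the definition (\ref{Def:K}) combined with Lemma \ref{lem:multiplier}, (\ref{503})--(\ref{505}), Lemma \ref{lem:t}, and the reciprocal bound for $\mathfrak l$ given by Lemma \ref{lem601}.

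To establish the pointwise lower bound I reuse the three-regime splitting of Lemma \ref{lem601}: (a) $R_2 A\leq|\lambda|^{1/2}$, (b) $R_1|\lambda|^{1/2}\leq A$, and (c) the intermediate bounded regime $R_2^{-1}|\lambda|^{1/2}\leq A\leq R_1|\lambda|^{1/2}$. In (a), the multiplier bounds from Step 1 give $|K_H|\leq C(|\lambda|^{1/2}+A)\leq 2C|\lambda|^{1/2}$, so choosing $\lambda_0$ large enough (depending on $C$ and $R_2$) yields $|\lambda+K_H|\geq\tfrac12|\lambda|\geq c(|\lambda|^{1/2}+A)$. In (b), I expand $B_\pm=A(1+O(\delta_1))$ and $t_i=A(1+O(\delta_1))$ with $\delta_1=|\lambda|^{1/2}/A$ in (\ref{452}), (\ref{455}), and (\ref{Def:K}) as in the proof of Lemma \ref{lem601}. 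The explicit surface-tension contributions $-\sigma_+/(2\mu_{*+}B_+)$ in $R^+_{NN}$ and $\sigma_-A/(\mu_{*-}(A+B_-)B_-)$ in $R^-_{NN}$ combine with the first-term contributions to produce a leading coefficient proportional to $\sigma/(\rho_{*-}-\rho_{*+})$ with a prefactor determined by the positive physical constants (the assumption $\rho_{*+}\neq\rho_{*-}$ is essential so that $K_H$ is well defined through the $(\rho_{*-}-\rho_{*+})^{-1}$ factor); the resulting nonzero leading real part together with (\ref{501}) then delivers the desired lower bound in this regime.

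The main obstacle will be the intermediate regime (c), where neither $|\lambda|$ nor $K_H$ obviously dominates and the expansions of (b) are not available. I handle it by compactness and contradiction, paralleling the closing argument of Lemma \ref{lem601}. After the rescaling $\widetilde\lambda=\lambda/(|\lambda|^{1/2}+A)^2$, $\widetilde A=A/(|\lambda|^{1/2}+A)$, the admissible parameters lie in the compact set $D_\varepsilon(R_1,R_2)$ of that lemma. If the lower bound failed there, a limit argument would produce a nonzero tuple $(\mathbf u_\pm,\rho_+,\pi_-,\widehat H(0))$ solving the homogeneous form of (\ref{701}) with $d=0$, i.e., the system (\ref{605}) together with the kinematic relation $\lambda\widehat H(0)=(\rho_{*-}\widehat u_{N-}|_--\rho_{*+}\widehat u_{N+}|_+)/(\rho_{*-}-\rho_{*+})$. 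Running the integration-by-parts computation that produced (\ref{606})--(\ref{607}) in Lemma \ref{lem601}, but now retaining the surface-tension boundary terms $\sigma_\pm|\xi'|^2\widehat H(0)\,\overline{\widehat u_{N\pm}(0)}$ and using the kinematic relation to re-express $\overline{\widehat u_{N\pm}(0)}$ in terms of $\overline\lambda\,\overline{\widehat H(0)}$, the real part of the resulting augmented identity contains an extra nonnegative term proportional to $\sigma|\xi'|^2({\rm Re}\,\lambda)|\widehat H(0)|^2$. For ${\rm Re}\,\lambda\geq 0$, which is the regime in which the energy argument of Lemma \ref{lem601} gives a contradiction, this forces $(\mathbf u_\pm,\rho_+,\pi_-,\widehat H(0))=0$; the restriction $\varepsilon_*<\varepsilon<\pi/2$ then propagates the non-vanishing from the compact right-half-plane subset $D(R_1,R_2)$ to all of $D_\varepsilon(R_1,R_2)$ by the uniform continuity argument of that lemma. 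Combining the three regimes with the $\lambda_0$ chosen above yields the pointwise lower bound and hence (\ref{702}).
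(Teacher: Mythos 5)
Your overall plan --- prove a pointwise lower bound $|\lambda+K_H|\geq c(|\lambda|^{1/2}+A)$ on $\widetilde\Sigma_{\varepsilon,\lambda_0}$ and upgrade to (\ref{702}) via Leibniz and Bell's formula --- coincides with the paper's, and your regime (b) matches the paper's first case verbatim (the Taylor expansion $K_H=\omega_3A(1+O(\delta_1))$). The difference is that you split the complementary region into a ``small $A$'' regime (a) with $R_2A\leq|\lambda|^{1/2}$ and an ``intermediate'' regime (c), and in (c) you invoke a rescaling--compactness--contradiction argument powered by an energy identity with surface tension and the kinematic relation. The paper has no intermediate regime at all: it covers the entire set $\{A\leq R_1|\lambda|^{1/2}\}$ with the direct quantitative bound $|K_H|\leq C|\lambda|^{1/2}$ (obtained from the explicit formulas (\ref{448}), (\ref{452}), (\ref{455}) and Lemma \ref{lem601}) and a large-$\lambda_0$ truncation. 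In fact, your own regime-(a) reasoning already works on the whole set $A\leq R_1|\lambda|^{1/2}$: the multiplier bound $|K_H|\leq C(|\lambda|^{1/2}+A)$ gives $|K_H|\leq C(1+R_1)|\lambda|^{1/2}$ there, which combined with $|\lambda|\geq\lambda_0$ large gives $|\lambda+K_H|\geq\tfrac12|\lambda|\geq c(|\lambda|^{1/2}+A)$. So regime (c) is logically redundant.

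The energy-identity detour in (c) is also substantially heavier than what the statement requires. To make it rigorous you would have to re-derive (\ref{606})--(\ref{607}) with the \emph{nonhomogeneous} normal-stress conditions $\mathbf{T}^{\pm}_{NN}|_\pm=\sigma_\pm\Delta'H=-\sigma_\pm|\xi'|^2\widehat H(0)$, combine the two boundary contributions via the kinematic relation $\lambda\widehat H(0)=\bigl(\rho_{*-}\widehat u_{N-}(0)-\rho_{*+}\widehat u_{N+}(0)\bigr)/(\rho_{*-}-\rho_{*+})$ to get an extra term $\sigma|\xi'|^2\bar\lambda\,|\widehat H(0)|^2$, and check that the signs (through $\sigma_\pm$, $\Delta'\mapsto-|\xi'|^2$, and the orientation of the two half-line boundary integrals) really make its real part nonnegative for ${\rm Re}\,\lambda\geq 0$. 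The signs do work out, but you state this without verification, and the argument also needs the degenerate case ${\rm Re}\,\lambda=0$ closed: one must show that vanishing of all dissipation terms forces $\mathbf u_\pm=0$, $\rho_+=0$, $\pi_-=0$, and then $\widehat H(0)=0$ via the kinematic relation with $\lambda\neq 0$. None of this machinery is needed in the paper's proof, which gets the same lower bound from elementary magnitude estimates on the explicit multiplier $K_H$.
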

\begin{proof}	
To prove (\ref{702}) with $\alpha'=0$ and $s=0$,
first we consider the case where $R_1|\lambda|^{1/2}\leq A$
with large $R_1$.
In the following, $\delta_1$ is the same small number as in the
proof of Lemma. \ref{lem601}.
In this case, we see that
\begin{align*}
AR_{NN}^+=-\frac{\sigma_+}{2\mu_{*+}}A(1+O(\delta_1)),\quad
AR_{NN}^-=\frac{\sigma_-}{2\mu_{*-}}A(1+O(\delta_1)).
\end{align*}
Then, we have	
\begin{align}\label{703}
K_H&=\Big[
\Big(\frac{\rho_{*+}}{\rho_{*-}-\rho_{*+}} \Big)^2
\frac{\sigma}{2\mu_{*+}}
+\Big(\frac{\rho_{*-}}{\rho_{*-}-\rho_{*+}} \Big)^2
\frac{\sigma}{2\mu_{*-}}
\Big]A(1+O(\delta_1))\\
&=:\omega_3 A(1+O(\delta_1))\nonumber\quad (\omega_3>0).
\end{align}	
Since $\lambda\in \Sigma_\varepsilon$, 
by (\ref{501}) and (\ref{703})
we have
\begin{align*}
|\lambda+K_H|\geq\Big(\sin\frac{\varepsilon}{2}\Big)
\Big(|\lambda|^{1/2}+\omega_3A\Big)-\omega_3 A O(\delta_1).
\end{align*}
If we choose $\delta_1$ so small that $O(\delta_1)
\leq\sin(\varepsilon/2)/2$, we have
\begin{align}\label{704}
|\lambda+K_H|\geq
\Big(\frac{1}{2}\sin\frac{\varepsilon}{2}\Big)
\Big(|\lambda|^{1/2}+\omega_3A\Big)
\end{align}	
provided that $R_1|\lambda|^{1/2}\leq A$ with
large $R_1>0$ and $\lambda\in \Sigma_\varepsilon$.

Next we consider the case where $A\leq R_1|\lambda|^{1/2}$.
Here and in the sequel, $C$ denotes a generic constant
depending on $R_1$, $\varepsilon$, $\mu_{*\pm}$, $\nu_{*+}$,
$\kappa_{*+}$, and $\rho_{*\pm}$.
From (\ref{442}), (\ref{det:L}), (\ref{446}), and (\ref{448}),
we have	
\begin{align*}
|Q_{NN,1}^+|\leq C|\lambda|^{1/2},\quad
|Q_{NN,2}^+|\leq C|\lambda|^{1/2}.
\end{align*}
Then, by (\ref{452}) and (\ref{455}),
we easily see
\begin{align*}
|R_{NN}^+|\leq C,\quad |R_{NN}^-|\leq C,
\end{align*}
which, combined with (\ref{Def:K}), furnishes that
\begin{align*}
|K_H|\leq C|\lambda|^{1/2}
\end{align*}
for any $(\lambda,\xi')\in \widetilde{\Sigma}_{\varepsilon,0}$
provided that $A\leq R_1 |\lambda|$.
Thus we have 
$|\lambda+K_H|\geq |\lambda|^{1/2}(|\lambda|^{1/2}-C)$.
Consequently, when we take $\lambda_0>0$ so large that
$\lambda_0^{1/2}/2\geq C$,
we have
\begin{align*}
|\lambda+K_H|\geq \frac{1}{2}|\lambda|
\end{align*}	
for any $(\lambda,\xi')\in 
\widetilde{\Sigma}_{\varepsilon,\lambda_0}$ provided that
$A\leq R_1|\lambda|$.
Since $A\leq R_1|\lambda|^{1/2}$, we have
\begin{align*}
|\lambda+K_H|\geq
\frac{1}{4}|\lambda|+\frac{1}{4}|\lambda|
\geq \frac{1}{4}|\lambda|
+\frac{\lambda_0^{1/2}}{4}|\lambda|^{1/2}
\geq\frac{1}{4}\Big(|\lambda|+\frac{\lambda_0^{1/2}A}{R_1}\Big).
\end{align*}	
Choosing $R_1$ so large that 
$\lambda_0^{1/2}R_1^{-1}\leq \omega_3$, we obtain
\begin{align}\label{705}
|\lambda+K_H|\geq \frac{1}{4}
\Big(|\lambda|^{1/2}+\omega_3 A \Big)
\end{align}
for any $(\lambda,\xi')\in \widetilde{\Sigma}_{\varepsilon,0}$	
provided that $A\leq R_1|\lambda|$.
Accordingly, by 
$\Sigma_{\varepsilon,\lambda_0}\subset\Sigma_{\varepsilon}$,
combining (\ref{704}) and (\ref{705}), we obtain
\begin{align*}
|\lambda+K_H|\geq \omega_4 (|\lambda|^{1/2}+A)
\end{align*}
for any $(\lambda,\xi')\in 
\widetilde{\Sigma}_{\varepsilon,\lambda_0}$ with
\begin{align*}
\omega_4=\min \Big(\frac{1}{4},\frac{\omega_3}{4},
\frac{\sin(\varepsilon/2)}{2},
\frac{\omega_3\sin(\varepsilon/2)}{2}\Big).
\end{align*}	
		
Finally, we prove (\ref{702}) for any multi-index 
$\alpha'\in \mathbb{N}^{N-1}_0$.
By Lemma \ref{lem:multiplier}, (\ref{4.429}), 
and Lemma \ref{lem601},
we have $K_H\in \mathbb{M}_{1,2,\varepsilon,0}$, so that
by the Bell's formula (\ref{Bell}) with $f(t)=(\lambda+t)^{-1}$
and $g=K_H$, we have	
\begin{align*}
|\partial^{\alpha'}_{\xi'}(\lambda+K_H)^{-1}|
\leq C_{\alpha'}\sum_{l=1}^{|\alpha'|}
|\lambda+K_H|^{-(l+1)}(|\lambda|^{1/2}+A)^l A^{-|\alpha'|}
\leq C_{\alpha'}(|\lambda|^{1/2}+A)^{-1}A^{-|\alpha'|}.
\end{align*}
Analogously, we have
\begin{align*}
|\partial^{\alpha'}_{\xi'}
(\tau\partial_\tau(\lambda+K_H)^{-1})|
\leq C_{\alpha'}(|\lambda|^{1/2}+A)^{-1}A^{-|\alpha'|}.
\end{align*}
Summing up, we have (\ref{702}).		
\end{proof}	
From (\ref{Def:K}) and Lemma \ref{lem:K}, we have
\begin{align*}
\widehat{H}(\xi',0)
=(\lambda+K_H)^{-1}\widehat{d}(\xi',0),
\end{align*}
so that we define $\widehat{H}(\xi',x_N)$ by 
$\widehat{H}(\xi',x_N)
=e^{-(1+A^2)^{1/2}x_N}(\lambda+K_H)^{-1}\widehat{d}(\xi',0)$.
The following lemma was proved in Shibata \cite{Shibata2016}.
\begin{lemm}\label{Operator:H}
Let $1<q<\infty,\enskip \varepsilon_*<\varepsilon<\pi/2$ and 
let $\lambda_0$ be the same constant as in Lemma \ref{lem:K}.
Assume that $\rho_{*+}\neq\rho_{*-}$, $\eta_*\neq 0$,
and $\kappa_{*+}\neq \mu_{*+}\nu_{*+}$.
Given that the operator $\widetilde{\mathcal{H}}(\lambda)$
is defined by	
\begin{align*}
[\widetilde{\mathcal{H}}(\lambda)d](x)
=\mathcal{F}^{-1}_{\xi'}
[e^{-(1+A^2)^{1/2}x_N}(\lambda+K_H)^{-1}\widehat{d}(\xi',0)]
(x') \qquad\text{ for $x'\in\mathbb{R}^N_+$}
\end{align*}	
for any	$d\in W^2_q(\mathbb{R}^N)$, 
$\widetilde{\mathcal{H}}(\lambda)\in
{\rm Anal}\,(\Sigma_{\varepsilon,\lambda_0},
\mathcal{L}(W^2_q(\mathbb{R}^N),W^3_q(\mathbb{R}^N_+))$,
and		
\begin{align*}
\mathcal{R}_
{\mathcal{L}(W^2_q(\mathbb{R}^N),W^3_q(\mathbb{R}^N_+))}
(\{(\tau\partial_\tau)^s (\lambda,\nabla) 
\widetilde{\mathcal{H}}(\lambda)\mid
\lambda\in \Sigma_{\varepsilon,\lambda_0} \})\leq \gamma
\quad(s=0,1)
\end{align*}
with some constant $\gamma$ depending on 
$\lambda_0$, $\varepsilon$, $\mu_{*\pm}$, $\nu_{*+}$,
$\kappa_{*+}$, and $\rho_{*\pm}$.
\end{lemm}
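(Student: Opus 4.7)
The plan is to reduce the trace-based operator $\widetilde{\mathcal{H}}(\lambda)$ to a finite family of bulk Fourier-multiplier integral operators of the kind already handled in Lemma~\ref{lem.4.4} and Corollary~\ref{cor.operator}, so that the $\mathcal{R}$-boundedness assertion follows by the Shibata--Shimizu multiplier argument. Lemma~\ref{lem:K} has already identified the scalar symbol $(\lambda + K_H)^{-1}$ as an element of $\mathbb{M}_{-1, 2, \varepsilon, \lambda_0}$ (together with its $(\tau\partial_\tau)^s$-analogue for $s = 0, 1$). The auxiliary symbol $B_0 := (1+A^2)^{1/2}$ satisfies $c(|\lambda|^{1/2}+A) \leq B_0 \leq C(|\lambda|^{1/2}+A)$ on $\Sigma_{\varepsilon, \lambda_0}$ thanks to the strict lower bound $|\lambda| \geq \lambda_0 > 0$, and the Bell formula (\ref{Bell}) applied to $t\mapsto (1+t)^{1/2}$ then yields $B_0^r \in \mathbb{M}_{r, 1, \varepsilon, \lambda_0}$ for every $r \in \mathbb{R}$.

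First, I would use Volevich's identity twice to rewrite the trace factor in bulk form,
\begin{align*}
e^{-B_0 x_N}\widehat{d}(\xi', 0) = \int_0^\infty\!\!\int_0^\infty \partial_{y_N}\partial_{z_N}\bigl[e^{-B_0(x_N + y_N + z_N)}\widehat{d}(\xi', y_N+z_N)\bigr]\, dz_N\, dy_N.
\end{align*}
Expanding the derivatives produces a sum of three terms whose symbols are $B_0^2$, $2B_0$, and $1$, each multiplied by $e^{-B_0(x_N + y_N + z_N)}$ and applied to $\widehat{d}$, $\partial_N\widehat{d}$, $\partial_N^2\widehat{d}$ at $(\xi', y_N + z_N)$. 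Combining with $(\lambda + K_H)^{-1}$ and applying $\mathcal{F}^{-1}_{\xi'}$ exhibits $\widetilde{\mathcal{H}}(\lambda) d$ as a finite sum of double-integral operators acting on $d,\partial_N d,\partial_N^2 d \in L_q(\mathbb{R}^N)$, all of which are controlled by $\|d\|_{W^2_q(\mathbb{R}^N)}$; the factor $B_0^{2-j}$ in the $j$-th piece is exactly what is needed so that the underlying scalar symbol $(\lambda+K_H)^{-1} B_0^{2-j}$ belongs to $\mathbb{M}_{1-j, 2, \varepsilon, \lambda_0}$.

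Second, to control $\lambda\widetilde{\mathcal{H}}(\lambda) d$ and $\nabla\widetilde{\mathcal{H}}(\lambda) d$ in $W^3_q(\mathbb{R}^N_+)$, I would differentiate under the integrals. Each tangential derivative $\partial_{x_l}$ ($l < N$) pulls down $i\xi_l \in \mathbb{M}_{1, 2, \varepsilon, 0}$; each normal derivative $\partial_{x_N}$ pulls down $-B_0 \in \mathbb{M}_{1, 1, \varepsilon, \lambda_0}$ from the exponential; and the prefactor $\lambda$ is an element of $\mathbb{M}_{2, 1, \varepsilon, 0}$. For every combination of the prefactor (either $\lambda$ or $\partial_{x_l}$) with up to three bulk derivatives, Lemma~\ref{lem:multiplier} shows that the total symbol takes the form $\mathfrak{m}(\lambda,\xi')\, e^{-B_0(x_N+y_N+z_N)}$ with $\mathfrak{m}\in\mathbb{M}_{0,2,\varepsilon,\lambda_0}$, provided excess factors of $B_0$ produced by normal derivatives are absorbed by integrating by parts in $y_N$ or $z_N$; this transfers residual $B_0$'s onto additional $\partial_N$'s on $d$, which is admissible because at most two such transfers are required and $d \in W^2_q$ supplies exactly the needed regularity.

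Finally, for each such zeroth-order multiplier $\mathfrak{m}$, the associated double-integral operator
\begin{align*}
f\mapsto\int_0^\infty\!\!\int_0^\infty \mathcal{F}^{-1}_{\xi'}\bigl[\mathfrak{m}(\lambda,\xi')\, e^{-B_0(x_N+y_N+z_N)}\widehat{f}(\xi',y_N+z_N)\bigr]\, dy_N\, dz_N
\end{align*}
is $\mathcal{R}$-bounded on $L_q(\mathbb{R}^N_+)$ uniformly in $\lambda\in\Sigma_{\varepsilon,\lambda_0}$; the proof is a direct adaptation of that of Lemma~\ref{lem.4.4}, combining the Leibniz rule with the decay estimates $|\partial^{\alpha'}_{\xi'} e^{-B_0 x_N}|\leq C_{\alpha'} A^{-|\alpha'|} e^{-b A x_N}$ to establish the pointwise kernel bound $|(\tau\partial_\tau)^s k_\lambda(x)|\leq C|x|^{-N}$, followed by Theorem~3.3 of Shibata--Shimizu~\cite{SS2012}. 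Summing the finitely many contributions produces the stated $\mathcal{R}$-bound in $\mathcal{L}(W^2_q(\mathbb{R}^N), W^3_q(\mathbb{R}^N_+))$, and analyticity of $\widetilde{\mathcal{H}}(\lambda)$ is immediate from the analyticity of $(\lambda+K_H)^{-1}$ on $\Sigma_{\varepsilon,\lambda_0}$ supplied by Lemma~\ref{lem:K}. The principal obstacle is therefore purely combinatorial bookkeeping: verifying that every one of the finitely many symbols produced by the derivative prefactors and the Volevich expansion indeed lies in $\mathbb{M}_{0,2,\varepsilon,\lambda_0}$ after the various absorptions, with no new spectral or Lopatinski input beyond Lemma~\ref{lem:K}.
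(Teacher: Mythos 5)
The paper offers no proof of this lemma at all --- it is quoted verbatim from Shibata \cite{Shibata2016} --- so your argument is necessarily an independent route, and it is in the right family of techniques (Volevich's identity, the multiplier classes $\mathbb{M}_{s,l,\varepsilon,\lambda_0}$, kernel bounds \`a la Lemma \ref{lem.4.4}). However, it rests on a false claim. You assert that $B_0:=(1+A^2)^{1/2}$ satisfies $c(|\lambda|^{1/2}+A)\leq B_0\leq C(|\lambda|^{1/2}+A)$ on $\Sigma_{\varepsilon,\lambda_0}$, hence $B_0^r\in\mathbb{M}_{r,1,\varepsilon,\lambda_0}$ for every $r\in\mathbb{R}$. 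The lower bound fails: fix $A$ and let $|\lambda|\to\infty$; then $B_0=O(1)$ while $|\lambda|^{1/2}+A\to\infty$. So negative powers of $B_0$ are not in the corresponding negative-order classes, and, much more seriously, the exponential $e^{-B_0 x_N}$ has no decay in $\lambda$ whatsoever: one only has $\mathrm{Re}\,B_0\geq c(1+A)$, not $\geq c(|\lambda|^{1/2}+A)$ as for $B_\pm$ and $t_i$. This is exactly what distinguishes the height-function operator from the velocity operators, and it is why the conclusion cannot be obtained by a ``direct adaptation'' of Lemma \ref{lem.4.4}, whose kernel estimates and order bookkeeping are calibrated to the weight $|\lambda|^{1/2}+A$.

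The step that concretely fails is the assertion that, after the Volevich expansion and the absorptions, every residual symbol lies in $\mathbb{M}_{0,2,\varepsilon,\lambda_0}$. Consider $\lambda\partial_N^3\widetilde{\mathcal{H}}(\lambda)d$, whose raw symbol is $-\lambda B_0^3(\lambda+K_H)^{-1}$. Transferring two powers of $B_0$ onto $\partial_N^2 d$ and reserving one factor of $A$ for the kernel bound leaves $\lambda B_0(\lambda+K_H)^{-1}A^{-1}$; using only the estimate $|\lambda+K_H|\geq c(|\lambda|^{1/2}+A)$ recorded in Lemma \ref{lem:K}, this is of size $|\lambda|^{1/2}A^{-1}$ when $A\leq 1$, which is unbounded, so the residual symbol is not of order $0$. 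To close the argument one must use the stronger bound $|\lambda+K_H|\geq c(|\lambda|+A)$ --- which the proof of Lemma \ref{lem:K} actually establishes ($|\lambda+K_H|\geq \tfrac12|\lambda|$ for $A\leq R_1|\lambda|^{1/2}$, and $|\lambda+\omega_3A|\geq \sin(\varepsilon/2)(|\lambda|+\omega_3A)$ up to a small error otherwise) but does not state --- so that $\lambda(\lambda+K_H)^{-1}$ and $A(\lambda+K_H)^{-1}$ become uniformly bounded scalar multipliers, and one must treat $d\mapsto\mathcal{F}^{-1}_{\xi'}[e^{-(1+A^2)^{1/2}x_N}\widehat{d}(\xi',0)]$ as a fixed, $\lambda$-independent bounded extension operator on Sobolev spaces, obtaining the $\mathcal{R}$-bound from the contraction principle applied to those scalar multipliers rather than from a $|k_\lambda(x)|\leq C|x|^{-N}$ kernel estimate. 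That is essentially the argument of \cite{Shibata2016}; your outline needs to be reorganized along these lines before the ``combinatorial bookkeeping'' can be said to close.
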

We extend 
$\widetilde{\mathcal{H}}(\lambda) d$ to $x_N<0$,
namely, we define $\mathcal{H}(\lambda)$ by		
\begin{align*}
[\mathcal{H}(\lambda)d](x)=
\begin{cases}
[\widetilde{\mathcal{H}}(\lambda)d](x)
& (x_N>0),\\\displaystyle
\sum_{j=1}^{4}a_j[\widetilde{\mathcal{H}}(\lambda) d]
(x',-jx_N)
& (x_N<0),
\end{cases}
\end{align*}		
where $a_j$ are constants satisfying the equations:
$\sum_{j=1}^{4}a_j(-j)^k=1$ for $k=0,1,2,3$.
By Lemma \ref{Operator:H}, we have the following corollary
of Lemma \ref{Operator:H} immediately.
\begin{corr}
Let $1<q<\infty$, $\varepsilon_*<\varepsilon<\pi/2$ 
and let $\lambda_0$
be the same constant as in Lemma \ref{lem:K}.
Assume that $\rho_{*+}\neq\rho_{*-}$, $\eta_*\neq 0$,
and $\kappa_{*+}\neq \mu_{*+}\nu_{*+}$.
Then, there exists an operator family
\begin{align*}
\mathcal{H}(\lambda)\in
{\rm Anal}\,(\Sigma_{\varepsilon,\lambda_0},
\mathcal{L}(W^2_q(\mathbb{R}^N),W^3_q(\mathbb{R}^N))),
\end{align*}		
such that for any $\lambda\in \Sigma_{\varepsilon,\lambda_0}$
and $d\in W^2_q(\mathcal{R}^N)$,
$\mathcal{F}^{-1}_{\xi'}[\widehat{d}(\xi',0)/(\lambda+K_H)](x')
=\mathcal{H}(\lambda)d|_{x_N=0} $
and
\begin{align*}
\mathcal{R}_{\mathcal{L}(W^2_q(\mathbb{R}^N),
W^2_q(\mathbb{R}^N)^{N+1})}
(\{(\tau\partial_\tau)^s(\lambda,\nabla)
\mathcal{H}(\lambda)\mid\lambda\in 
\Sigma_{\varepsilon,\lambda_0} \})
\leq c_0 \qquad (s=0,1)
\end{align*}	
with some constant $c_0$ depending on
$\lambda_0$, $\varepsilon$, $\mu_{*\pm}$, $\nu_{*+}$,
$\kappa_{*+}$, and $\rho_{*\pm}$.
\end{corr}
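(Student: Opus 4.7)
The plan is to reduce the system \eqref{701} to a scalar resolvent equation for the interfacial height $\widehat{H}(0)$, solve that equation, and then recover $(\mathbf{u}_\pm,\rho_+,\pi_-)$ by feeding the resulting $H$ back into the solution operators constructed in Theorem \ref{Th05}. Applying the partial Fourier transform to \eqref{701} with $g_m=h_m=k=0$, the representations \eqref{4.428} continue to describe $\widehat{u}_{J\pm}$, $\widehat{\rho}_+$ and $\widehat{\pi}_-$ in terms of the still-unknown $\widehat{H}(0)$. Evaluating $\widehat{u}_{N\pm}|_\pm(0)=AR^\pm_{NN}\widehat{H}(0)$ from \eqref{417}--\eqref{418} and substituting into the transformed kinematic condition \eqref{resolvent:H} produces the scalar equation $(\lambda+K_H)\widehat{H}(0)=\widehat{d}(0)$ with $K_H$ given by \eqref{Def:K}.

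The crux of the argument is Lemma \ref{lem:K}, asserting the multiplier-type bound $|\partial_{\xi'}^{\alpha'}(\tau\partial_\tau)^s(\lambda+K_H)^{-1}|\le C_{\alpha'}(|\lambda|^{1/2}+A)^{-1}A^{-|\alpha'|}$. I would prove the case $\alpha'=s=0$ by splitting $\widetilde{\Sigma}_{\varepsilon,\lambda_0}$ into two regimes. In the high-frequency regime $R_1|\lambda|^{1/2}\le A$ with $R_1$ large, an asymptotic expansion using $B_\pm,t_i=A(1+O(\delta_1))$ with $\delta_1=|\lambda|^{1/2}/A$ yields $K_H=\omega_3 A(1+O(\delta_1))$ with an explicit positive constant $\omega_3$ involving $\sigma,\rho_{*\pm},\mu_{*\pm}$; the wedge inequality \eqref{501} then delivers $|\lambda+K_H|\ge c(|\lambda|^{1/2}+A)$. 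In the complementary regime $A\le R_1|\lambda|^{1/2}$, the bounds \eqref{4.429} proved in Section \ref{Sect.multipliers} give $|K_H|\le C|\lambda|^{1/2}$, so choosing $\lambda_0$ sufficiently large with $\lambda_0^{1/2}\gg C$ absorbs $K_H$ into $\lambda$. Matching the two regimes through the compatibility constraint $\lambda_0^{1/2}R_1^{-1}\le\omega_3$ produces the uniform bound. Derivative estimates then follow from $K_H\in\mathbb{M}_{1,2,\varepsilon,0}$ (a consequence of Section \ref{Sect.multipliers}) combined with the Bell formula \eqref{Bell} applied to $f(t)=(\lambda+t)^{-1}$ and $g=K_H$.

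Once Lemma \ref{lem:K} is in hand, define $\widetilde{\mathcal{H}}(\lambda)d(x)=\mathcal{F}^{-1}_{\xi'}[e^{-(1+A^2)^{1/2}x_N}(\lambda+K_H)^{-1}\widehat{d}(\xi',0)](x')$ for $x_N>0$; Lemma \ref{Operator:H} quoted from Shibata \cite{Shibata2016} supplies the $\mathcal{R}$-boundedness in $\mathcal{L}(W^2_q(\mathbb{R}^N),W^3_q(\mathbb{R}^N_+))$, whose proof relies on Volevich's trick and the standard Fourier-multiplier lemma (Theorem 3.6 of \cite{SS2012}). The Hestenes-type reflection $\mathcal{H}(\lambda)d(x',x_N)=\sum_{j=1}^4 a_j\widetilde{\mathcal{H}}(\lambda)d(x',-jx_N)$ for $x_N<0$ with $\sum a_j(-j)^k=1$ for $k=0,1,2,3$ extends the operator across the interface while preserving three derivatives, yielding the operator family $\mathcal{H}(\lambda)$ with the required $\mathcal{R}$-bound.

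Finally, with $\mathcal{H}(\lambda)$ constructed, I would define $\mathcal{A}^4_\pm(\lambda)$, $\mathcal{B}^4_+(\lambda)$, $\mathcal{P}^4_-(\lambda)$ as the compositions obtained by feeding $\widetilde{\mathbf{H}}=(\lambda\mathcal{H}(\lambda)d,\lambda^{1/2}\nabla\mathcal{H}(\lambda)d,\nabla^2\mathcal{H}(\lambda)d)$ into the operators $\mathcal{A}^2_\pm(\lambda)$, $\mathcal{B}^2_+(\lambda)$, $\mathcal{P}^2_-(\lambda)$ from Theorem \ref{Th05} with $\mathbf{h}=0$. Since composition of $\mathcal{R}$-bounded operator families is $\mathcal{R}$-bounded, the bounds \eqref{operator.estimate1} combined with the $\mathcal{R}$-bound for $\mathcal{H}(\lambda)$ yield the required operator-norm estimates. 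The main obstacle is Lemma \ref{lem:K}: producing a clean uniform lower bound on $\lambda+K_H$ across the transitional regime $A\sim|\lambda|^{1/2}$ requires choosing $R_1$ first (to make the leading coefficient $\omega_3$ appear with the correct sign) and then $\lambda_0$ (so that the large-$\lambda$ estimate matches at the boundary), rather than the two parameters being chosen independently.
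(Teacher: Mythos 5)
Your proposal is correct and follows the paper's approach: define $\widetilde{\mathcal{H}}(\lambda)$ on the half-space via Lemma \ref{Operator:H}, then use the Hestenes-type reflection with coefficients satisfying $\sum_{j=1}^4 a_j(-j)^k=1$ ($k=0,1,2,3$) to extend to $x_N<0$, which preserves $W^3_q$ regularity and the $\mathcal{R}$-bounds because the extension is a bounded linear map independent of $\lambda$. The remainder of your writeup (Lemma \ref{lem:K} and the construction of $\mathcal{A}^4_\pm$, $\mathcal{B}^4_+$, $\mathcal{P}^4_-$) also agrees with the paper, but belongs to the proof of Theorem \ref{Th:sect.6} rather than this Corollary, which the paper disposes of in one line as an immediate consequence of Lemma \ref{Operator:H}.
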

Then, we construct solution operators of (\ref{701}).
Using (\ref{4.428}) and (\ref{eq:K}), we have	
\begin{align*}
\widehat{u}_{J+}
=&\frac{A Q_{JN,1}^+}{\lambda+K_H}AM_{1+}(x_N)\widehat{d}(0)
+\frac{A Q_{JN,2}^+}{\lambda+K_H}AM_{2+}(x_N)\widehat{d}(0)
+\frac{AR_{JN}^+}{\lambda+K_H}Ae^{-B_+x_N}\widehat{d}(0),\\
\widehat{u}_{J-}
=&\frac{AQ_{JN}^-}{\lambda+K_H} AM_{-}(x_N)\widehat{d}(0)
+\frac{AR_{JN}^-}{\lambda+K_H}Ae^{B_-x_N}\widehat{d}(0),\\
\widehat{\rho}_+
=&\frac{AP_{N,1}^+}{\lambda+K_H}AM_{0+}(x_N)\widehat{d}(0)
+\frac{AP_{N,2}^+}{\lambda+K_H}Ae^{-t_1x_N}\widehat{d}(0)\\
\widehat{\pi}_-
=&\frac{AP_N^-}{\lambda+K_H} e^{Ax_N}\widehat{d}(0),
\end{align*}
which yields
\begin{align*}	
u_{J+}
=&\mathcal{F}^{-1}_{\xi'}
\Big[\frac{A Q_{JN,1}^+}{\lambda+K_H}A M_{1+}(x_N)\widehat{d}(0)
\Big](x')
+\mathcal{F}^{-1}_{\xi'}
\Big[\frac{A Q_{JN,2}^+}{\lambda+K_H}A M_{2+}(x_N)\widehat{d}(0)
\Big](x')\\
&+\mathcal{F}^{-1}_{\xi'}
\Big[\frac{AR_{JN}^+}{\lambda+K_H}Ae^{-B_+x_N}\widehat{d}(0)
\Big](x')
=: \mathcal{A} ^{4+}_J(\lambda) d,\\
u_{J-}
=&\mathcal{F}^{-1}_{\xi'}
\Big[\frac{AQ_{JN}^-}{\lambda+K_H} AM_{-}(x_N)\widehat{d}(0)
\Big](x')
+\mathcal{F}^{-1}_{\xi'}
\Big[\frac{AR_{JN}^-}{\lambda+K_H}Ae^{B_-x_N}\widehat{d}(0)
\Big](x')
=: \mathcal{A} ^{4-}_J(\lambda) d,\\
\rho_+
=&\mathcal{F}^{-1}_{\xi'}
\Big[\frac{AP_{N,1}^+}{\lambda+K_H}A M_{0+}(x_N)\widehat{d}(0)
\Big](x')
+\mathcal{F}^{-1}_{\xi'}
\Big[\frac{AP_{N,2}^+}{\lambda+K_H}Ae^{-t_1x_N}\widehat{d}(0)
\Big](x')
=: \mathcal{B} ^{4}_+(\lambda) d,\\
\pi_-
=&\mathcal{F}^{-1}_{\xi'}
\Big[\frac{AP_N^-}{\lambda+K_H} e^{Ax_N}
\Big](x')
=: \mathcal{P} ^{4}_-(\lambda) d.
\end{align*}
Accordingly, if we the define operator
$\mathcal{A}^4_\pm(\lambda)$ by
\begin{align*}
\mathcal{A}^4_\pm d
=(\mathcal{A}^{4\pm}_1,\dots,\mathcal{A}^{4\pm}_N) d,
\end{align*}
by (\ref{4.429}), Corollary \ref{cor.operator},
Lemma \ref{lem:K}, and the argument in Shibata 
\cite[Sect. 6]{Shibata2016}, we have Theorem \ref{Th:sect.6},
which completes the proof of Theorem \ref{Th02}.
		
\section{Acknowledgment}
The author would like to thank Prof. Yoshihiro Shibata and
Dr. Hirokazu Saito for stimulating discussions
on the subject of this paper.

\end{document}